\newtheorem{theorem}{Theorem}[section]
\newtheorem{lemma}[theorem]{Lemma}
\newtheorem{proposition}[theorem]{Proposition}
\newtheorem{corollary}[theorem]{Corollary}
\theoremstyle{definition}
\newtheorem{definition}[theorem]{Definition}
\newtheorem{example}[theorem]{Example}
\theoremstyle{remark}
\newtheorem{remark}[theorem]{Remark}
\numberwithin{equation}{section}
\begin{document}
	\setcounter{page}{1}
	\title[Douglas' factorization theorem and  atomic system in Hilbert pro$-C^{\ast}-$module]{Douglas' factorization theorem and  atomic system in Hilbert pro$-C^{\ast}-$module}
	
	\author[M.  Rossafi, R. Eljazzar, R. Mohapatra]{Mohamed Rossafi$^{1*}$, Roumaissae Eljazzar$^{2}$ and Ram Mohapatra$^{3}$}
	
	\address{$^{1}$LaSMA Laboratory Department of Mathematics Faculty of Sciences, Dhar El Mahraz University Sidi Mohamed Ben Abdellah, Fez, Morocco}
	\email{\textcolor[rgb]{0.00,0.00,0.84}{rossafimohamed@gmail.com}}
	
	\address{$^{2}$Laboratory of Partial Differential Equations, Spectral Algebra and Geometry, Department of Mathematics, Faculty Of Sciences, University of Ibn Tofail, Kenitra, Morocco}
	\email{\textcolor[rgb]{0.00,0.00,0.84}{roumaissae.eljazzar@uit.ac.ma}}
	
	\address{$^{3}$Department of Mathematics, University of Central Florida, Orlando, FL., 32816, USA}
	\email{\textcolor[rgb]{0.00,0.00,0.84}{ram.mohapatra@ucf.edu}}
	
	\subjclass[2010]{42C15, 46L05, 47A05.}
	
	\keywords{Douglas majorization, atomic system, Hilbert pro-$C^{\ast}$-modules.\\$^*$Corresponding author: Mohamed Rossafi (rossafimohamed@gmail.com).}

	\begin{abstract}
		In the present paper we introduce  the generalized inverse operators which have an interesting role in operator theory. We establish Douglas' factorization theorem type  for Hilbert pro-$C^{\ast}$-module. We introduce the notion of atomic system and of $K$-frame in Hilbert pro-$C^{\ast}$-module and we study the relationship between them. We also demonstrat some properties of $K$-frame by using Douglas' factorization theorem.Finally  we demonstrate that the sum of two $K$-frames in a Hilbert pro-$C^{\ast}$-module with certain conditions is once again a $K$-frame.
	\end{abstract} \maketitle
	
	\baselineskip=15pt
	
	\section*{INTRODUCTION}
	Douglas \cite{Douglas} has studied the equation $AX = B$ with a view to finding a solution for bounded linear operators on Hilbert spaces. A generalization of Douglas theorem for Hilbert $C^{\ast}$-module was given in  \cite{Fang} and \cite{Fan}.
	Those authors have extended the Douglas factor decomposition for closed
	densely defined operators on Hilbert spaces in the context of regular
	operators in Hilbert $C^{\ast}$-modules. 

	Frames, introduced by Duffin and Schaefer \cite{Duf} in 1952 to analyse some deep problems in nonharmonic Fourier series by abstracting the fundamental notion introduced by Gabor \cite{Gab} for signal processing. Today, frame theory is an exciting, dynamic and fast paced subject with applications to a wide variety of areas in mathematics and engineering, including sampling theory, operator theory, harmonic analysis, nonlinear sparse approximation, pseudodifferential operators, wavelet theory, wireless communication, data transmission with erasures, filter banks, signal processing, image processing, geophysics, quantum computing, sensor networks, and more. 
	
	In 2008, Joita  \cite{Joita} extended the theory of frames in Hilbert modules over pro-$C^{\ast}$-algebras.
	The concept of $K$-frame was first introduced by Laura G{\u{a}}vru{\c{t}}a \cite{Gavruta} in order to study atomic systems for a given bounded linear operator $K$ in a separable Hilbert space. It is well known that K-frames present a genralization  of ordinary frames, which permits to reconstruct the elements in the domain of a linear and bounded operator in a Hilbert space.  

	This paper is devoted to establishing Douglas' Factorization theorem type results for pro-$C^{\ast}$-modules. Moreover, we define the atomic system in the framework of pro-$C^{\ast}$-modules and mention some properties. Also, we present $K$-frame in Hilbert pro-$C^{\ast}$-modules and establish some results. 

	This article will be organized as follows: In section \ref{sec2},  we briefly recall
	some definitions and basic properties of pro-$C^{\ast}$-algebra. Section \ref{sec3} is devoted to introduce the concept of generalized inverse which will be used to prove a Douglas type theorem on a Hilbert  pro-$C^{\ast}$-module.
	In Section \ref{sec4} we define the atomic system and show that every Bessel sequence is an atomic system for the frame operator. Finaly we show that the sum of two $K$-frames under certain conditions is again a $K$-frame.
	\section{Preliminaries}\label{sec1}
	The basic information about pro-$C^{\ast}$-algebras can be found in the works \cite{Frag,Frago,Mallios,Inoue,Philip,Philips}.
	
	A $C^{\ast}$-algebra whose topology is induced by a family of continuous \\$C^{\ast}-$seminorms instead of a $C^{\ast}$-norm is called pro-$C^{\ast}$-algebra.  Hilbert pro-$C^{\ast}$-modules are generalizations of Hilbert spaces the inner product takes values in a pro-$C^{\ast}$-algebra rather than in the field of complex numbers.
	
	Pro-$C^{\ast}$-algebra is defined as a complete Hausdorff complex topological $\ast$-algebra $\mathcal{A}$ whose topology is determined by its continuous $C^{\ast}$-seminorms in the sens that a net $\{a_{\alpha}\}$ converges to $0$ if 	and only if $p(a_{\alpha})$ converges to $0$ for all continuous $C^{\ast}$-seminorms $p$ on $\mathcal{A}$ (see \cite{Inoue,Lance,Philips}), and we have:
	\begin{enumerate}
		\item[1)] $p(ab) \leq p(a)p(b)$
		\item[2)] $p(a^{\ast}a)=p(a)^{2}$
	\end{enumerate}
	for all  $a , b \in \mathcal{A}$.
	If the topology of pro-$C^{*}$-algebra is determined by only countably many $C^{*}$-seminorms, then it is called
	a $\sigma$-$C^{*}$-algebra.
	We denote by $sp(a)$ the spectrum of $a$ such that: $sp(a)=\left\{\lambda \in \mathbb{C}: \lambda 1_{\mathcal{A}}-a\right.$ is not invertible $\}$ for all $a \in \mathcal{A}$, where $\mathcal{A}$ is unital pro-$C^{*}$-algebra with unity $1_{\mathcal{A}}$.
	
	Let the set of all continuous $C^{\ast}$-seminorms on $\mathcal{A}$ be denoted by $S(\mathcal{A})$.
	If $\mathcal{A}^{+}$ denotes the set of all positive elements of $\mathcal{A}$, then $\mathcal{A}^{+}$ is a closed convex $C^{*}$-seminorms on $\mathcal{A}.$

	We also denote by  $\mathcal{H}_{\mathcal{A}}$  the set of all sequences ( $\left.a_{n}\right)_{n}$ with $a_{n} \in \mathcal{A}$ such that $\sum_{n} a_{n}^{*} a_{n}$ converges in $\mathcal{A}$.
	\begin{example}
		Every $C^{*}$-algebra is a pro-$C^{*}$-algebra.
	\end{example}
	\begin{proposition}\cite{Inoue}
		Let $\mathcal{A}$ be a unital pro-$C^{*}$-algebra with an identity $1_{\mathcal{A}}.$ Then for any $p \in S(\mathcal{A}),$ we have:
		\begin{enumerate}
			\item[(1)] $p(a)= p(a^{*})$ for all $a \in A$
			\item[(2)] $p \left(1_{\mathcal{A}}\right)=1$
			\item[(3)]  If $1_{\mathcal{A}} \leq b$, then $b$ is invertible and $b^{-1} \leq 1_{\mathcal{A}}$
			\item[(4)]If $a, b \in \mathcal{A}^{+}$ are invertible and $0 \leq a \leq b$, then $0 \leq b^{-1} \leq a^{-1}$
			\item[(5)] If $a, b \in \mathcal{A}^{+}$ and $a^{2} \leq b^{2}$, then $0 \leq a \leq b$
		\end{enumerate}
	\end{proposition}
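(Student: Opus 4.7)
The overall plan is to treat the five assertions in increasing order of subtlety. Parts (1) and (2) are pure consequences of the $C^{\ast}$-seminorm axioms listed right before the proposition, while (3)--(5) are order-theoretic statements in the algebra itself, for which I intend to lift the standard $C^{\ast}$-algebra arguments using the Arens--Michael decomposition $\mathcal{A}=\varprojlim_{p}\mathcal{A}_{p}$ (with $\mathcal{A}_{p}=\mathcal{A}/\ker p$) so that continuous functional calculus, square roots and order-preserving properties are available at each quotient level and hence in $\mathcal{A}$.

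For (1), I would combine the $C^{\ast}$-condition with submultiplicativity: $p(a)^{2}=p(a^{\ast}a)\leq p(a^{\ast})p(a)$, so $p(a)\leq p(a^{\ast})$, and the reverse inequality follows by applying the same argument with $a$ replaced by $a^{\ast}$. For (2), the identity $p(1_{\mathcal{A}})^{2}=p(1_{\mathcal{A}}^{\ast}1_{\mathcal{A}})=p(1_{\mathcal{A}})$ forces $p(1_{\mathcal{A}})\in\{0,1\}$; the value $0$ would give $p(a)=p(1_{\mathcal{A}}a)\leq p(1_{\mathcal{A}})p(a)=0$ for every $a$, contradicting the assumption that $p$ is a nontrivial continuous $C^{\ast}$-seminorm in $S(\mathcal{A})$.

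For (3), the hypothesis $1_{\mathcal{A}}\leq b$ means $b-1_{\mathcal{A}}\in\mathcal{A}^{+}$, and self-adjointness together with functional calculus in each $C^{\ast}$-algebra $\mathcal{A}_{p}$ shows that the spectrum satisfies $\mathrm{sp}(b)\subseteq[1,\infty)$; in particular $0\notin\mathrm{sp}(b)$, so $b$ is invertible, and since $\mathrm{sp}(b^{-1})\subseteq(0,1]$ the element $1_{\mathcal{A}}-b^{-1}$ is positive. Part (4) is then immediate from (3): the invertibility and positivity of $a$ allow us to form $a^{-1/2}$ by functional calculus, and sandwiching $0\leq a\leq b$ with $a^{-1/2}$ yields $1_{\mathcal{A}}\leq a^{-1/2}ba^{-1/2}$; applying (3) and then sandwiching back with $a^{-1/2}$ gives $b^{-1}\leq a^{-1}$.

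The heart of the proof, and the step I expect to be the main obstacle, is (5), which is essentially the statement that $t\mapsto t^{1/2}$ is operator monotone. In a single $C^{\ast}$-algebra this is classical (Loewner's theorem, or the elementary Pedersen-style proof using the integral representation $t^{1/2}=\tfrac{1}{\pi}\int_{0}^{\infty}\lambda^{-1/2}\bigl(1-\lambda(\lambda+t)^{-1}\bigr)\,d\lambda$). The issue in the pro-$C^{\ast}$-setting is that one must justify the existence and order behaviour of the square root through the inverse limit. My plan is to define $a^{1/2}$ as the unique positive element whose image in every $\mathcal{A}_{p}$ is the ordinary $C^{\ast}$-square root, apply the operator monotonicity of $t^{1/2}$ in each $\mathcal{A}_{p}$ to the images of $a^{2}\leq b^{2}$, and then use the compatibility of the order with the projective limit (an element of $\mathcal{A}$ is positive iff its image in each $\mathcal{A}_{p}$ is positive) to conclude $a\leq b$ in $\mathcal{A}$. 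Once this compatibility and the integral representation of the square root are in place, the rest is bookkeeping.
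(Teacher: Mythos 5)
The paper does not prove this proposition at all: it is imported verbatim from Inoue's paper on locally $C^{\ast}$-algebras via the citation, so there is no internal argument to compare yours against. Judged on its own, your proof is correct and is essentially the standard one. Parts (1) and (2) follow exactly as you say from $p(a)^{2}=p(a^{\ast}a)\leq p(a^{\ast})p(a)$ and from $p(1_{\mathcal{A}})^{2}=p(1_{\mathcal{A}})$ (with the trivial case $p(a)=0$ handled separately, and with the tacit convention that the zero seminorm is excluded from $S(\mathcal{A})$, without which (2) is literally false). For (3)--(5) your strategy of passing to the Arens--Michael quotients $\mathcal{A}_{p}=\mathcal{A}/\ker p$ is the right one, and you correctly isolate the two facts that carry the whole argument: that positivity in $\mathcal{A}$ is detected by positivity of all the images $\pi_{p}(a)$, and that the continuous functional calculus (hence inverses and positive square roots) is coherent with the connecting maps $\mathcal{A}_{q}\to\mathcal{A}_{p}$, so that a compatible family of inverses or square roots assembles into an element of $\varprojlim_{p}\mathcal{A}_{p}=\mathcal{A}$. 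Both facts are in Inoue and Phillips, so nothing is circular. Your integral representation for $t^{1/2}$ is correct (it reduces to $\int_{0}^{\infty}u^{-1/2}(1+u)^{-1}\,du=\pi$) and does give operator monotonicity of the square root at each level $p$; if you want to avoid invoking Loewner-type machinery, the elementary Pedersen argument also lifts: assuming $b$ invertible, $\bigl\|b^{-1/2}ab^{-1/2}\bigr\|\leq 1$ gives $\bigl\|b^{-1/4}a^{1/2}b^{-1/4}\bigr\|\leq 1$ by equality of norm and spectral radius for self-adjoint elements, whence $a^{1/2}\leq b^{1/2}$. Either way the proof is sound; the only caveat worth recording is that the reduction to quotients is doing real work and deserves an explicit citation rather than the phrase ``the rest is bookkeeping.''
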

	\begin{definition}\cite{Philips}
		A pre-Hilbert module over pro-$C^{\ast}$-algebra $\mathcal{A}$, is a complex vector space $E$ which is also
		a left $\mathcal{A}$-module compatible with the complex algebra structure, equipped with an $\mathcal{A}$-valued inner product $\langle .,.\rangle$ $E \times E \rightarrow \mathcal{A}$ which is $\mathbb{C}$-and $\mathcal{A}$-linear in its first variable and satisfies the following conditions:
		\begin{enumerate}
			\item[1)]
			$\langle \xi, \eta\rangle^{*}=\langle \eta, \xi\rangle 
			$ for every $\xi,\eta \in E$
			\item[2)]$\langle \xi, \xi\rangle \geq 0$ for every $\xi \in E$
			\item[3)] $\langle \xi, \xi\rangle=0 $  if and only if  $\xi=0$
		\end{enumerate}
		for every $\xi, \eta \in E .$ We say $E$ is a Hilbert $\mathcal{A}$-module (or Hilbert pro-$C^{\ast}$-module over $\mathcal{A}$ ). If $E$ is complete with respect to the topology determined by the family of seminorms
		$$
		\bar{p}_{E}(\xi)=\sqrt{p(\langle \xi, \xi\rangle)} \quad \xi \in E, p \in S(\mathcal{A})
		$$
	\end{definition}
	Let $\mathcal{A}$ be a pro-$C^{\ast}$-algebra and let $\mathcal{X}$ and $\mathcal{Y}$ be Hilbert $\mathcal{A}$-modules and assume that I and J be countable index sets.
	A bounded $\mathcal{A}$-module map from
	$\mathcal{X}$ to $\mathcal{Y}$ is called an operators from $\mathcal{X}$ to $\mathcal{Y}$. We denote the set of all operator from $\mathcal{X}$ to $\mathcal{Y}$ by  $Hom_{\mathcal{A}}(\mathcal{X}, \mathcal{Y})$.
	\begin{definition}
		An $ \mathcal{A}$-module map $T: \mathcal{X} \longrightarrow \mathcal{Y}$  is adjointable if there is a map $T^{\ast}: \mathcal{Y} \longrightarrow \mathcal{X}$ such that $\langle T \xi, \eta\rangle=\left\langle \xi, T^{\ast} \eta\right\rangle$ for all $\xi \in \mathcal{X}, \eta \in \mathcal{Y}$, and is called bounded if for all $p \in S(\mathcal{A})$, there is $M_{p}>0$ such that $\bar{p}_{\mathcal{Y}}(T \xi) \leq M_{p} \bar{p}_{\mathcal{X}}(\xi)$ for all $\xi \in \mathcal{X}$.
		
		We denote by $Hom_{\mathcal{A}}^{\ast}(\mathcal{X}, \mathcal{Y})$, the set of all adjointable operator from $\mathcal{X}$ to $\mathcal{Y}$ and $Hom_{\mathcal{A}}^{\ast}(\mathcal{X})=Hom_{\mathcal{A}}^{\ast}(\mathcal{X}, \mathcal{X})$	 
	\end{definition}
	\begin{definition}
		Let $\mathcal{A}$ be a pro-$C^{\ast}$-algebra and $\mathcal{X}, \mathcal{Y}$ be two Hilbert $\mathcal{A}$-modules. The operator $T: \mathcal{X} \rightarrow \mathcal{Y}$ is called uniformly bounded below, if there exists $C>0$ such that for each $p \in S(\mathcal{A})$,
		\begin{equation*}
			\bar{p}_{\mathcal{Y}}(T \xi) \leqslant C \bar{p}_{\mathcal{X}}(\xi), \quad \text { for all } \xi \in \mathcal{X} 
		\end{equation*}
		and  is called uniformly bounded  above if there exists $C^{\prime}>0$ such that for each $p \in S(\mathcal{A})$,
		\begin{equation*}
			\bar{p}_{\mathcal{Y}}(T \xi) \geqslant C^{\prime}  \bar{p}_{\mathcal{X}}(\xi), \quad \text { for all } \xi \in \mathcal{X}
		\end{equation*}
		\begin{equation*}
			\|T\|_{\infty}=\inf \{M: M \text { is an upper bound for } T\}
		\end{equation*}
		\begin{equation*}
			\hat{p}_{\mathcal{Y}}(T)=\sup \left\{\bar{p}_{\mathcal{Y}}(T(x)): \xi \in \mathcal{X}, \quad \bar{p}_{\mathcal{X}}(\xi) \leqslant 1\right\}
		\end{equation*}
		It's clear to see that,  $\hat{p}(T) \leqslant\|T\|_{\infty}$ for all $p \in S(\mathcal{A})$.	
	\end{definition}
	\begin{definition}\cite{Joita}
		A sequence $\left\{\xi_{i}\right\}_{i}$ in $M(\mathcal{X})$ is a standard frame of multipliers in $\mathcal{X}$ if for each $\xi \in \mathcal{X}, \sum_{i}\left\langle\xi, \xi_{i}\right\rangle_{M(\mathcal{X})}\left\langle \xi_{i}, \xi\right\rangle_{M(\mathcal{X})}$ converges in $\mathcal{A}$, and there are two positive constants $C$ and $D$ such that
		$$
		C\langle\xi, \xi\rangle_{\mathcal{X}} \leqslant \sum_{i}\left\langle\xi, \xi_{i}\right\rangle_{M(\mathcal{X})}\left\langle \xi_{i}, \xi\right\rangle_{M(\mathcal{X})} \leqslant D\langle\xi, \xi\rangle_{\mathcal{X}}
		$$
		for all $\xi \in \mathcal{X}$. If $D=C=1$ we say that $\left\{\xi_{i} \right\}_{i}$ is a standard normalized frame of multipliers.

		Particularly, if the right inequality
		$$
		\sum_{i}\left\langle\xi, \xi_{i}\right\rangle_{M(\mathcal{X})}\left\langle \xi_{i}, \xi\right\rangle_{M(\mathcal{X})} \leqslant D\langle\xi, \xi\rangle_{\mathcal{X}} \; \; \forall \xi \in \mathcal{X}
		$$
		holds true, we call $\left\{\xi_{i}\right\}_{i\in I}$ a Bessel sequence.
	\end{definition}
	\begin{definition}
		Let $\left\{\xi_{i}\right\}_{i}$ be a standard frame of multipliers in $\mathcal{X}$. The module morphism $U: \mathcal{X} \rightarrow \mathcal{H}_{\mathcal{A}}$ defined by $U(x)=$ $\left(\left\langle \xi_{i}, x \right\rangle_{M(\mathcal{X})}\right)_{i}$ is called the frame transform for $\left\{\xi_{i}\right\}_{i}$.
	\end{definition}
	\begin{definition}
		Let $\mathcal{X}$ be a Hilbert module over a pro-$C^{*}$-algebra $\mathcal{A}$ and $\left\{\xi_{i}\right\}_{i}$ be a standard frame of multipliers for $\mathcal{X}$. The invertible positive and bounded element $L$ in $Hom_{\mathcal{A}}^{\ast}(\mathcal{X})$, such that $\sum_{i} \xi_{i} \cdot\left\langle \xi_{i},L(\xi)\right\rangle_{M(\mathcal{X})}=\xi$ for all $\xi \in $ is called the frame operator associated with the standard frame of multipliers $\left\{\xi_{i}\right\}_{i}$.
	\end{definition}
	\begin{proposition}\cite{Azhini}. \label{Prop2.6}
		Let $\mathcal{X}$ be a Hilbert module over pro-$C^{*}$-algebra $\mathcal{A}$ and $T$ be an invertible element in $Hom_{\mathcal{A}}^{\ast}(\mathcal{X})$ such that both are uniformly bounded. Then for each $\xi \in \mathcal{X}$,
		$$
		\left\|T^{-1}\right\|_{\infty}^{-2}\langle \xi, \xi\rangle \leq\langle T \xi, T \xi \rangle \leq\|T\|_{\infty}^{2}\langle \xi, \xi\rangle
		.$$
	\end{proposition}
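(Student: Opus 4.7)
The plan is to establish the upper bound first and then obtain the lower bound by applying the upper bound to $T^{-1}$. The core claim reduces to an operator inequality in the pro-$C^{*}$-algebra $Hom_{\mathcal{A}}^{\ast}(\mathcal{X})$, namely $\|T\|_{\infty}^{2}\cdot \mathrm{id}_{\mathcal{X}} - T^{*}T \geq 0$.

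For the upper bound, use that $T$ is adjointable to write $\langle T\xi, T\xi\rangle = \langle T^{*}T\xi,\xi\rangle$ for every $\xi \in \mathcal{X}$. The element $T^{*}T$ is positive and self-adjoint, and uniform boundedness of $T$ yields $\hat{p}_{\mathcal{X}}(T^{*}T) \leq \|T\|_{\infty}^{2}$ for every $p \in S(\mathcal{A})$. Passing to the completion of $Hom_{\mathcal{A}}^{\ast}(\mathcal{X})$ modulo $\ker(\hat{p})$ for each such $p$ produces a genuine $C^{*}$-algebra in which the image of $T^{*}T$ has norm at most $\|T\|_{\infty}^{2}$. In a $C^{*}$-algebra the standard spectral bound gives $\|T\|_{\infty}^{2}\cdot 1 - T^{*}T \geq 0$ in every such completion, and since positivity in a pro-$C^{*}$-algebra is detected by the family of seminorm completions, this inequality holds globally in $Hom_{\mathcal{A}}^{\ast}(\mathcal{X})$. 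Pairing both sides with $\xi$ yields $\langle T\xi, T\xi\rangle \leq \|T\|_{\infty}^{2}\langle \xi,\xi\rangle$.

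For the lower bound, apply the upper bound to the uniformly bounded adjointable operator $T^{-1}$, with $T\xi$ replacing $\xi$:
\begin{equation*}
\langle \xi,\xi\rangle = \langle T^{-1}(T\xi), T^{-1}(T\xi)\rangle \leq \|T^{-1}\|_{\infty}^{2}\langle T\xi, T\xi\rangle,
\end{equation*}
and multiplying through by the positive scalar $\|T^{-1}\|_{\infty}^{-2}$ gives the desired $\|T^{-1}\|_{\infty}^{-2}\langle \xi,\xi\rangle \leq \langle T\xi, T\xi\rangle$.

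The main obstacle is the positivity step in the pro-$C^{*}$-setting: one must verify that $T^{*}T$ descends to each seminorm-quotient of $Hom_{\mathcal{A}}^{\ast}(\mathcal{X})$ with the expected bound on its norm, and then invoke the characterization of the positive cone in a pro-$C^{*}$-algebra via its seminorm completions. Once positivity is in hand, both inequalities follow immediately; in particular, the lower bound is essentially a one-line corollary obtained by substituting $T^{-1}$ for $T$.
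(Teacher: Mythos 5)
The paper itself gives no proof of this proposition --- it is imported verbatim from \cite{Azhini} --- so there is nothing internal to compare against; I am judging your argument on its own. It is correct, and it is the standard argument. Reducing everything to the single operator inequality $\|T\|_{\infty}^{2}\,\mathrm{id}_{\mathcal{X}}-T^{*}T\geq 0$, converting operator positivity into the inner-product inequality (this is exactly Proposition \ref{prop2.11}), and then obtaining the lower bound by substituting $T\xi$ into the upper bound for $T^{-1}$ (which is adjointable, since $(T^{-1})^{*}=(T^{*})^{-1}$) are all sound, and the final rescaling by the positive scalar $\|T^{-1}\|_{\infty}^{-2}$ is legitimate because the positive cone is stable under positive scalar multiplication. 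The one step you should make explicit is the one you yourself flag: for ``the completion of $Hom_{\mathcal{A}}^{\ast}(\mathcal{X})$ modulo $\ker(\hat{p})$'' to be a genuine $C^{*}$-algebra you need $\hat{p}$ to satisfy the $C^{*}$-identity $\hat{p}(S^{*}S)=\hat{p}(S)^{2}$, i.e.\ that $Hom_{\mathcal{A}}^{\ast}(\mathcal{X})$ with the family $\{\hat{p}\}$ is itself a pro-$C^{*}$-algebra realized as the inverse limit of these quotients, and you need that positivity in the inverse limit is detected componentwise. Both facts are standard (see \cite{Philip} and \cite{Joitaa}) but they carry essentially all the weight of the proof, so they should be cited rather than asserted. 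A slightly more elementary route that sidesteps the operator algebra entirely: for each $p$ pass to the Hilbert $C^{*}$-module $\mathcal{X}_{p}=\mathcal{X}/\{\xi:\bar{p}_{\mathcal{X}}(\xi)=0\}$ over the $C^{*}$-algebra $\mathcal{A}_{p}$, on which $T$ induces an adjointable operator $T_{p}$ with $\|T_{p}\|=\hat{p}(T)\leq\|T\|_{\infty}$; the classical Hilbert $C^{*}$-module inequality $\langle T_{p}\eta,T_{p}\eta\rangle\leq\|T_{p}\|^{2}\langle\eta,\eta\rangle$ then gives the upper bound in each $\mathcal{A}_{p}$, and since an element of $\mathcal{A}$ is positive exactly when its image in every $\mathcal{A}_{p}$ is positive, the inequality lifts to $\mathcal{A}$. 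Either way the statement holds, and your lower-bound step is, as you say, a one-line corollary.
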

	\begin{theorem}\cite{Joitaa}\label{2.10} 
		Let $T \in Hom_{\mathcal{A}}^{\ast}(\mathcal{X},\mathcal{Y})$. If $T$ has closed range then:
		\begin{enumerate}
			\item $Ker (T)$ is a complemented submodule of $\mathcal{X}$;
			\item  $Ran(T)$, the range of $T$, is a complemented submodule of $\mathcal{Y}$.
		\end{enumerate}
	\end{theorem}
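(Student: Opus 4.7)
The plan is to leverage the generalized-inverse machinery developed in Section \ref{sec3} and to adapt the standard Hilbert $C^{\ast}$-module argument (in the spirit of Lance) to the pro-$C^{\ast}$-module setting. The two conclusions are tightly linked: once we produce a self-adjoint idempotent $P \in Hom_{\mathcal{A}}^{\ast}(\mathcal{X})$ with $\operatorname{Ran}(P) = \operatorname{Ker}(T)$ and a self-adjoint idempotent $Q \in Hom_{\mathcal{A}}^{\ast}(\mathcal{Y})$ with $\operatorname{Ran}(Q) = \operatorname{Ran}(T)$, both items fall out at once as $\mathcal{X} = \operatorname{Ker}(T) \oplus (I-P)\mathcal{X}$ and $\mathcal{Y} = \operatorname{Ran}(T) \oplus (I-Q)\mathcal{Y}$.

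First I would prove that $T^{\ast}$ inherits closed range from $T$, and that the restriction $T_{0} := T|_{\operatorname{Ran}(T^{\ast})} : \operatorname{Ran}(T^{\ast}) \to \operatorname{Ran}(T)$ is a bijective adjointable map between the corresponding Hilbert submodules. Injectivity follows from $\operatorname{Ker}(T) \perp \operatorname{Ran}(T^{\ast})$ and surjectivity from closedness of $\operatorname{Ran}(T)$. The essential analytic input is that $T_{0}$ is bounded below in every seminorm, i.e.\ for each $p \in S(\mathcal{A})$ there is $c_{p}>0$ with $\bar{p}_{\mathcal{Y}}(T_{0}\xi) \geq c_{p}\,\bar{p}_{\mathcal{X}}(\xi)$. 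This is the pro-$C^{\ast}$ replacement for the open-mapping / Banach isomorphism argument used in the classical case and is the step where the full strength of the closed-range hypothesis is needed.

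With $T_{0}$ invertible I would define the Moore-Penrose type generalized inverse $T^{\dagger}$ by $T^{\dagger}y := T_{0}^{-1}y$ on $\operatorname{Ran}(T)$ and $T^{\dagger}y := 0$ on its complement, check the four Penrose identities $TT^{\dagger}T = T$, $T^{\dagger}TT^{\dagger} = T^{\dagger}$, $(TT^{\dagger})^{\ast} = TT^{\dagger}$, $(T^{\dagger}T)^{\ast} = T^{\dagger}T$, and verify adjointability in each $\bar{p}$ directly from the defining relations (using Proposition \ref{Prop2.6} for the uniform bound on $T_{0}^{-1}$). Then $Q := T T^{\dagger}$ is an adjointable orthogonal projection onto $\operatorname{Ran}(T)$, and $P := I - T^{\dagger}T$ is an adjointable orthogonal projection onto $\operatorname{Ker}(T)$, which yields both decompositions simultaneously.

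The main obstacle I anticipate is the bounded-below step, because in the pro-$C^{\ast}$ setting we have a whole family $\{\bar{p}\}_{p\in S(\mathcal{A})}$ of seminorms and no single Banach-space open-mapping theorem to lean on. The cleanest route, I think, is to exploit the inverse-limit description of pro-$C^{\ast}$-algebras: for each $p \in S(\mathcal{A})$ pass to the $C^{\ast}$-algebra $\mathcal{A}_{p} := \mathcal{A}/\operatorname{Ker} p$ and to the induced Hilbert $\mathcal{A}_{p}$-module $\mathcal{X}_{p}$, apply the known Hilbert $C^{\ast}$-module version of the theorem there to obtain compatible projections and seminorm-wise lower bounds, and finally reassemble via the coherent family of quotients to get $P$ and $Q$ in $Hom_{\mathcal{A}}^{\ast}(\mathcal{X})$ and $Hom_{\mathcal{A}}^{\ast}(\mathcal{Y})$. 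Verifying coherence of the resulting family across different $p$'s, so that the limit operator is genuinely $\mathcal{A}$-linear and adjointable, is the technical point that would demand the most care.
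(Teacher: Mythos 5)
The paper offers no proof of this statement: Theorem \ref{2.10} is quoted verbatim from Joita's monograph \cite{Joitaa} and used as a black box, so there is nothing internal to compare your argument against. Judged on its own terms, your outline follows the classical Lance-type strategy but contains a circularity you have not resolved. You define $T^{\dagger}$ to be $T_{0}^{-1}$ on $\operatorname{Ran}(T)$ and $0$ ``on its complement'', and you extend $P := I - T^{\dagger}T$ to a projection with range $\operatorname{Ker}(T)$; but the existence of those complements in $\mathcal{Y}$ and $\mathcal{X}$ is exactly what assertions (2) and (1) claim. The standard way around this is to build the idempotents without presupposing any complement: regard $T$ as a surjective adjointable map onto the Hilbert submodule $F_{0} := \operatorname{Ran}(T)$, show that $TT^{\ast}|_{F_{0}}$ is bounded below and hence invertible in $Hom_{\mathcal{A}}^{\ast}(F_{0})$, and set $P := T^{\ast}\,(TT^{\ast}|_{F_{0}})^{-1}\,T$. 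This is a self-adjoint idempotent in $Hom_{\mathcal{A}}^{\ast}(\mathcal{X})$ with $\operatorname{Ker}(P) = \operatorname{Ker}(T)$, which yields $\mathcal{X} = \operatorname{Ker}(T) \oplus \operatorname{Ran}(P)$ and hence (1); applying the same construction to $T^{\ast}$ (after showing $\operatorname{Ran}(T^{\ast})$ is closed) gives (2). Your sketch never arrives at a complement-free construction of this kind.

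The second gap is in your proposed reduction to the $C^{\ast}$-case. Writing $\mathcal{X} = \varprojlim \mathcal{X}_{p}$ and $T = \varprojlim T_{p}$ is legitimate, but the hypothesis that $\operatorname{Ran}(T)$ is closed in the projective-limit topology of $\mathcal{Y}$ does not automatically imply that each induced operator $T_{p}$ has closed range in the Hilbert $C^{\ast}$-module $\mathcal{Y}_{p}$: the canonical map $\mathcal{Y} \to \mathcal{Y}_{p}$ is a quotient followed by a completion, and images of closed submodules under such maps need not be closed. Without closed range at each level $p$ you cannot invoke the Hilbert $C^{\ast}$-module theorem fibrewise, and the reassembly has nothing to reassemble. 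This descent of the closed-range property (equivalently, the seminorm-wise bounded-below estimate for $T_{0}$, which cannot be obtained from a single open-mapping argument since $S(\mathcal{A})$ need not be countable) is precisely the hard technical content of the result in \cite{Joitaa}; your proposal files it under ``verifying coherence'', which understates where the actual work lies.
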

	\begin{proposition}\cite{Zhura}\label{prop2.11}
		Let $T: \mathcal{X} \rightarrow \mathcal{X}$ be an operator, then the following statements are equivalent:
		\begin{enumerate}
			\item $T$ is a positive element in $Hom_{\mathcal{A}}^{\ast}(\mathcal{X})$ 
			\item  for any element $\xi \in \mathcal{X}$ the inequality $\langle T \xi, \xi\rangle\geq 0$ holds, i.e. this element is positive in
			$\mathcal{A}$. \end{enumerate}
	\end{proposition}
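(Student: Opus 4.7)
The plan is to establish the two implications separately. The direction $(1) \Rightarrow (2)$ is the easier one: positivity of $T$ in the pro-$C^{\ast}$-algebra $Hom_{\mathcal{A}}^{\ast}(\mathcal{X})$ means that $T = S^{\ast}S$ for some $S$ (take $S = T^{1/2}$ via the continuous functional calculus). Then for every $\xi \in \mathcal{X}$,
\[
\langle T\xi,\xi\rangle = \langle S^{\ast}S\xi,\xi\rangle = \langle S\xi, S\xi\rangle,
\]
which lies in $\mathcal{A}^{+}$ by axiom $(2)$ of the inner product.

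For the converse $(2) \Rightarrow (1)$, I would first show that $T$ must be self-adjoint. Since $\langle T\xi,\xi\rangle \in \mathcal{A}^{+}$ is self-adjoint in $\mathcal{A}$, and the adjointness relation combined with axiom $(1)$ of the inner product gives $\langle T\xi,\xi\rangle = \langle \xi, T^{\ast}\xi\rangle = \langle T^{\ast}\xi,\xi\rangle^{\ast}$, one deduces $\langle T\xi,\xi\rangle = \langle T^{\ast}\xi,\xi\rangle$, i.e. $\langle (T-T^{\ast})\xi,\xi\rangle = 0$ for all $\xi \in \mathcal{X}$. The polarization identity in Hilbert pro-$C^{\ast}$-modules then forces $T = T^{\ast}$.

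With self-adjointness available, I apply the continuous functional calculus inside $Hom_{\mathcal{A}}^{\ast}(\mathcal{X})$ to decompose $T = T_{+} - T_{-}$ with $T_{\pm} \geq 0$ and $T_{+}T_{-} = 0$. Feeding the vector $T_{-}\xi$ into hypothesis $(2)$ and using $TT_{-} = -T_{-}^{2}$ together with the self-adjointness of $T_{-}$ yields
\[
-\langle T_{-}^{3}\xi,\xi\rangle = \langle TT_{-}\xi, T_{-}\xi\rangle \in \mathcal{A}^{+}.
\]
On the other hand $T_{-}^{3} = (T_{-}^{3/2})^{\ast}T_{-}^{3/2} \geq 0$, so by the already established direction $(1) \Rightarrow (2)$, $\langle T_{-}^{3}\xi,\xi\rangle \in \mathcal{A}^{+}$ as well. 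Adding the two statements gives $\langle T_{-}^{3/2}\xi, T_{-}^{3/2}\xi\rangle = \langle T_{-}^{3}\xi,\xi\rangle = 0$; axiom $(3)$ of the inner product then forces $T_{-}^{3/2}\xi = 0$ for every $\xi$, and a final application of functional calculus yields $T_{-} = 0$, whence $T = T_{+} \geq 0$.

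The main obstacle I expect is the careful deployment of the pro-$C^{\ast}$ toolbox in this module setting: the polarization identity in $\mathcal{X}$ and the continuous functional calculus for self-adjoint elements of $Hom_{\mathcal{A}}^{\ast}(\mathcal{X})$ both hold, but one must work through the inverse-limit description of the pro-$C^{\ast}$-algebra rather than a single $C^{\ast}$-norm, and one must also use the standard fact that $a \in \mathcal{A}^{+}$ together with $-a \in \mathcal{A}^{+}$ implies $a = 0$. These ingredients are all available in the references already cited in the Preliminaries, so the step is one of correctly invoking them rather than constructing them from scratch.
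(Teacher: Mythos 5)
The paper does not prove this proposition at all --- it is imported verbatim from \cite{Zhura} as a known fact about Hilbert modules over locally $C^{\ast}$-algebras --- so there is no in-paper argument to compare yours against; I can only assess your proof on its own terms.

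On those terms it is correct, and it is the standard argument (essentially Lance's proof of the corresponding statement for Hilbert $C^{\ast}$-modules, transported to the pro-$C^{\ast}$ setting). The forward direction via $T=S^{\ast}S$ is immediate. For the converse, the chain ``$\langle T\xi,\xi\rangle$ self-adjoint $\Rightarrow$ $\langle (T-T^{\ast})\xi,\xi\rangle=0$ $\Rightarrow$ $T=T^{\ast}$ by polarization'' is valid because the module is complex, and the $T=T_{+}-T_{-}$ trick with the test vector $T_{-}\xi$ correctly yields $\langle T_{-}^{3}\xi,\xi\rangle\in\mathcal{A}^{+}\cap(-\mathcal{A}^{+})=\{0\}$, hence $T_{-}=0$. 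The ingredients you lean on are genuinely available: $Hom_{\mathcal{A}}^{\ast}(\mathcal{X})$ is itself a pro-$C^{\ast}$-algebra (so continuous functional calculus for self-adjoint elements applies seminorm by seminorm through the inverse-limit description), and $\mathcal{A}^{+}$ is a proper cone. Two small points of hygiene: (i) the proposition as printed says only ``operator'', i.e.\ a bounded module map, whereas your converse uses $T^{\ast}$ from the first line --- adjointability must either be read into the hypothesis (as the source intends) or assumed explicitly; (ii) ``adding the two statements'' should be phrased as invoking $a\geq 0$ and $-a\geq 0$ $\Rightarrow$ $a=0$, which you do state correctly at the end.
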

	\begin{lemma}\cite{Inoue}\label{lem2.11}
		Let $\mathcal{A}$ be a pro-$C^{*}$-algebra, suppose $\alpha, \beta \in \mathcal{A}^{+}$be such that $\alpha \leq \beta$. Then $p(\alpha) \leq p(\beta)$ and $\lambda^{*} \alpha \lambda \leq \lambda^{*} \beta \lambda$ for each $\lambda \in \mathcal{A}$.
	\end{lemma}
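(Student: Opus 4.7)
The plan is to handle the two claims separately, in each case reducing the statement to a standard $C^*$-algebra property by passing through the Hausdorff quotient associated to a continuous $C^*$-seminorm.

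For the first inequality $p(\alpha) \leq p(\beta)$, I would fix $p \in S(\mathcal{A})$ and consider the closed two-sided $*$-ideal $N_p := \{a \in \mathcal{A} : p(a) = 0\}$. The $C^*$-identity $p(a^*a) = p(a)^2$ in the definition of a pro-$C^*$-algebra ensures that the quotient $\mathcal{A}_p := \mathcal{A}/N_p$, equipped with the norm $\|\pi_p(a)\|_p := p(a)$, is an honest $C^*$-algebra and that $\pi_p : \mathcal{A} \to \mathcal{A}_p$ is a $*$-homomorphism. Since $*$-homomorphisms preserve the order on positive cones, $\alpha \leq \beta$ descends to $\pi_p(\alpha) \leq \pi_p(\beta)$ in $\mathcal{A}_p$. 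I would then invoke the classical $C^*$-algebra fact that $0 \leq a \leq b$ implies $\|a\| \leq \|b\|$ (proved, for instance, by adjoining a unit and noting $b \leq \|b\|\cdot 1_{\mathcal{A}}$, so $a \leq \|b\| \cdot 1_{\mathcal{A}}$, hence $\|a\| \leq \|b\|$). Transferring back through $\pi_p$ yields $p(\alpha) \leq p(\beta)$.

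For the second inequality $\lambda^* \alpha \lambda \leq \lambda^* \beta \lambda$, it suffices to show that $\lambda^*(\beta - \alpha)\lambda \geq 0$. Since $\beta - \alpha \in \mathcal{A}^+$, the continuous functional calculus in (the unitization of) $\mathcal{A}$ provides a self-adjoint square root $\gamma$ with $\gamma^2 = \beta - \alpha$, and then
$$
\lambda^*(\beta - \alpha)\lambda \;=\; \lambda^* \gamma^* \gamma \lambda \;=\; (\gamma\lambda)^*(\gamma\lambda) \;\geq\; 0,
$$
because any element of the form $c^*c$ lies in $\mathcal{A}^+$. Rearranging gives the desired inequality.

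The main delicate point is the reduction to a genuine $C^*$-algebra in the first part: one must know that $N_p$ is closed, that the induced quotient norm satisfies the $C^*$-identity, and that the quotient map preserves positivity and order. All three facts are available in the pro-$C^*$-algebra literature cited at the start of Section \ref{sec1} (in particular \cite{Inoue,Philips}), so once they are invoked, the argument collapses to a classical $C^*$-algebra norm bound together with the one-line square-root computation above.
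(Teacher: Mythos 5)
Your argument is correct; note that the paper itself offers no proof of this lemma, citing it directly from \cite{Inoue}, and your reduction via the quotient $\mathcal{A}/N_p$ to the classical $C^*$-algebra fact $0\leq a\leq b \Rightarrow \|a\|\leq\|b\|$, together with the factorization $\lambda^{*}(\beta-\alpha)\lambda=(\gamma\lambda)^{*}(\gamma\lambda)\geq 0$, is exactly the standard route taken in that reference. The only cosmetic simplification available is that for the second claim you can skip the functional calculus entirely and write $\beta-\alpha=c^{*}c$ directly from the characterization of $\mathcal{A}^{+}$, which gives $\lambda^{*}(\beta-\alpha)\lambda=(c\lambda)^{*}(c\lambda)\geq 0$ in one line.
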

	\section{Douglas' factorization theorem in Hilbert pro$-C^{\ast}-$module}\label{sec2}
	We start by defining the bounded generalized inverse module map. We give an equivalente characterization of a bounded generalized inverse $\mathcal{A}$-module map, which is the main tool to obtain Douglas' type factor decomposition theorem of some uniformly bounded module maps.
	
	\begin{definition}\label{3.1}
		Let $\mathcal{X}$ and $\mathcal{Y}$ be two Hilbert pro-$C^{*}$-modules over a pro-$C^{*}$-algebra $\mathcal{A}$, $T \in Hom_{\mathcal{A}}^{\ast}(\mathcal{X},\mathcal{Y})$. If there exists a $T^{\dagger} \in Hom_{\mathcal{A}}^{\ast}(\mathcal{X},\mathcal{Y})$ such that
		\begin{enumerate}
			\item[(1)] $T T^{\dagger} T=T$;
			\item[(2)] $T^{\dagger} T T^{\dagger}=T^{\dagger}$; 
			\item[(3)] $\left(T T^{\dagger}\right)^{*}=T T^{\dagger} ;$
			\item[(4)] $\left(T^{\dagger} T\right)^{*}=T^{\dagger} T$,
		\end{enumerate}
		then $T^{\dagger}$ is called a bounded generalized inverse module map of $T$.
	\end{definition} 
	\begin{remark}\label{rmk2.2}
		According to the conditions of the previous definition, we can easily deduce that $T^{\dagger}$ is unique. In fact suppose that $T$ has another generalized inverse which we denote by $T^{\prime}$. The definition \ref{3.1} implies  $Ran\left(T T^{\dagger}\right)=Ran(T)=Ran\left(T T^{\prime}\right)$ and $Ran\left(T^{\dagger} T\right)=Ran\left(T^{*}\right)=Ran\left(T^{\prime} T\right)$. Note that $T T^{\dagger}, T^{\dagger} T, T T^{\prime}, T^{\prime} T$ are all projections, hence $T T^{\dagger}=T T^{\prime}$ and $T^{\dagger} T=T^{\prime} T$. As a consequence,, $T^{\dagger}=T^{\dagger} T T^{\dagger}=T^{\dagger} T T^{\prime}=T^{\prime} T T^{\prime}=T^{\prime}$.
	\end{remark}
	\begin{theorem}\label{3.2}
		Let $T \in Hom_{\mathcal{A}}^{\ast}(\mathcal{X},\mathcal{Y})$, and let $\mathcal{X}, \mathcal{Y}$ be Hilbert pro-$C^{\ast}$-modules over a pro-$C^{*}$-algebra $\mathcal{A}$. Then the following statements are equivalent:
		\begin{enumerate}
			\item[(1)] $T$ has a generalized inverse module map in $Hom_{\mathcal{A}}^{\ast}(\mathcal{X},\mathcal{Y})$,
			\item[(2)] $Ran(T)$ is a closed submodule in $\mathcal{Y}$.
		\end{enumerate}
	\end{theorem}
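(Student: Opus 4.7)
The plan is to handle the two implications separately, using the Moore--Penrose recipe familiar from Hilbert space, and then verifying that every step still goes through in the pro-$C^{\ast}$-module setting thanks to Theorem \ref{2.10}.

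For the direction $(1)\Rightarrow (2)$, I would start from the observation that the four conditions in Definition \ref{3.1} force $P:=TT^{\dagger}$ to be a self-adjoint idempotent in $\operatorname{Hom}_{\mathcal{A}}^{\ast}(\mathcal{Y})$: condition (3) gives $P^{\ast}=P$, while using (2) one computes $P^{2}=T(T^{\dagger}TT^{\dagger})=TT^{\dagger}=P$. Then I would argue $\operatorname{Ran}(P)=\operatorname{Ran}(T)$: the inclusion $\operatorname{Ran}(P)\subseteq \operatorname{Ran}(T)$ is trivial, and condition (1) gives $T=TT^{\dagger}T=PT$, so $\operatorname{Ran}(T)\subseteq\operatorname{Ran}(P)$. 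Since an orthogonal projection on a Hilbert pro-$C^{\ast}$-module has closed range, this yields (2).

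For the harder direction $(2)\Rightarrow(1)$, I would invoke Theorem \ref{2.10}: the hypothesis that $\operatorname{Ran}(T)$ is closed makes both $\operatorname{Ker}(T)$ and $\operatorname{Ran}(T)$ complemented, so one gets orthogonal decompositions
\[
\mathcal{X}=\operatorname{Ker}(T)\oplus \operatorname{Ker}(T)^{\perp},\qquad
\mathcal{Y}=\operatorname{Ran}(T)\oplus \operatorname{Ran}(T)^{\perp}.
\]
The restriction $T_{0}:\operatorname{Ker}(T)^{\perp}\to\operatorname{Ran}(T)$ is then a bijective adjointable $\mathcal{A}$-module map, and I would define $T^{\dagger}$ to equal $T_{0}^{-1}$ on $\operatorname{Ran}(T)$ and $0$ on $\operatorname{Ran}(T)^{\perp}$. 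With this definition, $TT^{\dagger}$ becomes the orthogonal projection onto $\operatorname{Ran}(T)$ and $T^{\dagger}T$ the orthogonal projection onto $\operatorname{Ker}(T)^{\perp}$; the four axioms of Definition \ref{3.1} then drop out by direct substitution.

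The main obstacle is justifying that the algebraic inverse $T_{0}^{-1}$ is genuinely in $\operatorname{Hom}_{\mathcal{A}}^{\ast}$, i.e.\ is bounded with respect to every seminorm $\bar{p}_{\mathcal{X}}$ and admits an adjoint. In the classical Hilbert space case this is immediate from the open mapping theorem; here I would argue as follows. Adjointability of $T_{0}^{-1}$ follows formally once boundedness is established, by setting $(T_{0}^{-1})^{\ast}=(T_{0}^{\ast})^{-1}$ using the complemented decomposition of $\mathcal{Y}$. For boundedness in each seminorm, I would use that closedness of $\operatorname{Ran}(T)$ together with the complementation given by Theorem \ref{2.10} makes $T_{0}$ a topological isomorphism of complete Hausdorff modules whose topologies are induced by the seminorm families $\{\bar{p}_{\mathcal{X}}\}_{p\in S(\mathcal{A})}$ and $\{\bar{p}_{\mathcal{Y}}\}_{p\in S(\mathcal{A})}$, so an appeal to the open-mapping principle for Fréchet-type topologies (or the pro-$C^{\ast}$ analogue used in the references \cite{Philips,Joitaa}) supplies the required seminorm estimates $\bar{p}_{\mathcal{X}}(T_{0}^{-1}y)\le M_{p}\,\bar{p}_{\mathcal{Y}}(y)$. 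Once this is in hand, the verification of conditions (1)--(4) is routine, and uniqueness is already recorded in Remark \ref{rmk2.2}.
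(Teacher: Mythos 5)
Your proposal follows essentially the same route as the paper: the forward implication via the self-adjoint idempotent $TT^{\dagger}$ (which the paper delegates to Remark \ref{rmk2.2}), and the converse via Theorem \ref{2.10}, the decompositions $\mathcal{X}=\operatorname{Ker}(T)\oplus\operatorname{Ran}(T^{\ast})$ and $\mathcal{Y}=\operatorname{Ker}(T^{\ast})\oplus\operatorname{Ran}(T)$, and the same piecewise definition of $T^{\dagger}$. If anything you supply more detail than the paper, which merely asserts adjointability and axioms (1)--(4); the one point to tighten is that your appeal to an open-mapping principle is not available for a general (non-metrizable) pro-$C^{\ast}$-topology and should be replaced by the standard reduction to the Hilbert $C^{\ast}$-modules $\mathcal{X}_{p}$ over the quotient $C^{\ast}$-algebras $\mathcal{A}_{p}$, $p\in S(\mathcal{A})$, where the classical estimates apply.
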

	\begin{proof}
		$(1) \Rightarrow(2)$ is easy to prove, see the Remark \ref{rmk2.2}.
		
		$(2) \Rightarrow(1)$. Let $Ran(T)$ be closed. By Theorem \ref{2.10}, $Ker(T)$ and $Ran(T)$ are both complemented submodules in $\mathcal{X}$ and in $\mathcal{Y}$, respectively, then $\mathcal{X}=Ker(T) \oplus Ran\left(T^{*}\right)$ and $\mathcal{Y}=Ker\left(T^{*}\right) \oplus Ran(T)$. Define a map $T^{\dagger}: \mathcal{Y} \mapsto \mathcal{X}$ which is linear by
		$$
		T^{\dagger} \xi= \begin{cases}\left(T \mid Ker(T)^{\perp}\right)^{-1} \xi & \xi \in Ran(T) \\ 0 & \xi \in Ker\left(T^{*}\right)\end{cases}
		$$
		and  define a linear map $\left(T^{\dagger}\right)^{*}: \mathcal{X} \mapsto \mathcal{Y}$ by
		$$
		\left(T^{\dagger}\right)^{*} \xi= \begin{cases}\left(T^{*} \mid Ker\left(T^{*}\right)^{\perp}\right)^{-1} \xi & \xi \in Ran\left(T^{*}\right) \\ 0 & \xi \in Ker(T) .\end{cases}
		$$
		To prove that $T^{\dagger}$is the generalized inverse module map of $T$, we should prove that  $T^{\dagger} \in Hom_{\mathcal{A}}^{\ast}(\mathcal{X},\mathcal{Y})$, equivalently, $\left\langle T^{\dagger} \xi, \eta\right\rangle=\left\langle \xi,\left(T^{\dagger}\right)^{*} \eta\right\rangle, \xi \in \mathcal{Y}$, $\eta \in \mathcal{X}$. The verification of this identity is uncomplicated  utilizing the orthogonal direct sum decompositions. It's also easy to verify the conditions $(1)-(4)$ of Definition \ref{3.1}
	\end{proof}

	\begin{theorem}\label{3.3}
		Let $\mathcal{X}$ be a Hilbert $\mathcal{A}$-module over a pro-$C^{*}$-algebra $\mathcal{A} .$ Let $T, L \in Hom_{\mathcal{A}}^{*}(\mathcal{X})$.
		If  $Ran(L)$ is closed, then the following statements are equivalent:
		\begin{enumerate}
			\item $Ran(T) \subseteq Ran(L)$.
			\item  $T T^{*} \leq \alpha^{2} L L^{*}$ for some $\alpha \geq 0$.
			\item  There exists $U \in Hom_{\mathcal{A}}^{*}(\mathcal{X})$ uniformly bounded such that $T=L U$.
		\end{enumerate}
	\end{theorem}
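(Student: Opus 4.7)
The plan is to establish the cyclic chain $(1) \Rightarrow (3) \Rightarrow (2) \Rightarrow (1)$, leveraging the generalized inverse supplied by Theorem \ref{3.2} together with the order-theoretic tools of Proposition \ref{prop2.11} and Lemma \ref{lem2.11}.

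For $(1) \Rightarrow (3)$, I would apply Theorem \ref{3.2} to $L$ (whose range is closed by hypothesis) to obtain a bounded generalized inverse $L^{\dagger} \in Hom_{\mathcal{A}}^{*}(\mathcal{X})$, and set $U := L^{\dagger} T$. The identities $LL^{\dagger}L = L$ and $(LL^{\dagger})^{*} = LL^{\dagger}$ from Definition \ref{3.1} force $LL^{\dagger}$ to be the orthogonal projection onto $Ran(L)$; because $Ran(T) \subseteq Ran(L)$, the vector $T\xi$ is fixed by $LL^{\dagger}$ for every $\xi$, giving $LU = LL^{\dagger}T = T$. Uniform boundedness of $U$ is then read off by composing the uniform bounds of $L^{\dagger}$ and $T$.

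For $(3) \Rightarrow (2)$, starting from $T = LU$ I would compute $TT^{*} = L(UU^{*})L^{*}$ and argue that uniform boundedness of $U$ with constant $\alpha := \|U\|_{\infty}$ yields $UU^{*} \leq \alpha^{2}\, I$ as elements of $Hom_{\mathcal{A}}^{*}(\mathcal{X})$, a standard consequence of the definition which can be verified through Proposition \ref{prop2.11}. Sandwiching this inequality by $L$ on the left and $L^{*}$ on the right preserves the order by Lemma \ref{lem2.11}, giving $TT^{*} = LUU^{*}L^{*} \leq \alpha^{2} LL^{*}$.

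For $(2) \Rightarrow (1)$, the inequality $TT^{*} \leq \alpha^{2} LL^{*}$ translates through Proposition \ref{prop2.11} into
\[
\langle T^{*}\xi, T^{*}\xi \rangle \leq \alpha^{2} \langle L^{*}\xi, L^{*}\xi \rangle \qquad \text{for every } \xi \in \mathcal{X}.
\]
Taking $\xi \in Ker(L^{*})$ forces $\langle T^{*}\xi, T^{*}\xi\rangle = 0$, and positive-definiteness of the inner product yields $T^{*}\xi = 0$, so $Ker(L^{*}) \subseteq Ker(T^{*})$ and consequently $Ker(T^{*})^{\perp} \subseteq Ker(L^{*})^{\perp}$. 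The closedness of $Ran(L)$ together with the complementedness output of Theorem \ref{2.10} identifies $Ker(L^{*})^{\perp}$ with $Ran(L)$, and since $Ran(T) \subseteq Ker(T^{*})^{\perp}$ trivially, we conclude $Ran(T) \subseteq Ran(L)$.

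The main obstacle will be the step $(2) \Rightarrow (1)$, where two pro-$C^{*}$-specific points must be reconciled: first, passing from the operator inequality to the pointwise inner-product inequality (which is the content of Proposition \ref{prop2.11}), and second, exploiting the closed-range hypothesis through Theorem \ref{2.10} so that the identification $Ran(L) = Ker(L^{*})^{\perp}$ actually holds. The other implications are essentially algebraic, modulo routine verifications about uniform bounds.
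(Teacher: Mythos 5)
Your proof is correct, and for two of the three implications it coincides with the paper's: the construction $U = L^{\dagger}T$ via the generalized inverse of Theorem \ref{3.2} for $(1)\Rightarrow(3)$ (the paper defines $U$ pointwise by $T\xi = L\eta$, $\eta \in Ker(L)^{\perp}$, and then identifies it with $L^{\dagger}T$; your direct definition with the observation that $LL^{\dagger}$ is the orthogonal projection onto $Ran(L)$ is a cleaner packaging of the same idea), and the computation $TT^{*} = LUU^{*}L^{*} \leq \|U\|_{\infty}^{2} LL^{*}$ for $(3)\Rightarrow(2)$. Where you genuinely diverge is in the implication out of $(2)$: the paper proves $(2)\Rightarrow(3)$ directly, Douglas-style, by defining a map $U_{1}: L^{*}\mathcal{X} \to T^{*}\mathcal{X}$ via $U_{1}L^{*}\xi = T^{*}\xi$, using the inequality in $(2)$ to show $U_{1}$ is well defined and uniformly bounded, extending it by zero on $(L^{*}\mathcal{X})^{\perp}$, and then dualizing to get $T = LL^{\dagger}T$. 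You instead prove $(2)\Rightarrow(1)$ by the kernel argument $Ker(L^{*}) \subseteq Ker(T^{*})$, hence $Ran(T) \subseteq Ker(T^{*})^{\perp} \subseteq Ker(L^{*})^{\perp} = Ran(L)$, and close the cycle through $(1)\Rightarrow(3)$. Your route buys a real simplification: it sidesteps the well-definedness and extension issues for $U_{1}$ (which in the pro-$C^{*}$ setting require some care with the seminorm estimates, and which are the least tidy part of the paper's argument), at the cost of making the implication $(2)\Rightarrow(3)$ only available indirectly through the cycle rather than by an explicit formula. Both arguments use the closed-range hypothesis in the same essential way, namely through the orthogonal decomposition $\mathcal{X} = Ker(L^{*}) \oplus Ran(L)$ furnished by Theorem \ref{2.10}.
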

	\begin{proof} 
		$(3) \Longrightarrow (1)$ is obvious.
		
		$(1) \Longrightarrow (3)$ Suppose that $(1)$ holds. For every $\xi \in \mathcal{X},$ we have $T(\xi) \in Ran(L)$. Since $L$ has closed range, then by Theorem \ref{2.10} $Ker(L)$ is a complemented submodule, which it results that there exists a unique $\eta \in KerL^{\perp}$ such tat $T(\xi)=L(\eta)$. Let's define the map $U$ as follow $U: U(\xi)=\eta$. Observe the fact that then $U(\xi a)=\eta a$ indicates that $U$ is a module map, for each $a \in \mathcal{A}$. The construction of $U$ gives $T=L U$. We will next show  that $U \in Hom_{\mathcal{A}}^{*}(\mathcal{X})$. By Theorem \ref{3.2} $L$ has the generalized inverse module map $ L^{\dagger} \in Hom_{\mathcal{A}}^{*}(\mathcal{X})$. Since $T=L U$ then $L^{\dagger} T=L^{\dagger} L U$. Note that $ker L^{\perp}=Ran \left(L^{*}\right)$, hence $L^{\dagger} L U(\xi)=L^{\dagger} L(\eta)=\eta=U(\xi)$, for every $\xi \in \mathcal{X}$, that is, $L^{\dagger} L U=U$. According to the above proof we have $U=L^{\dagger} T \in Hom_{\mathcal{A}}^{*}(\mathcal{X})$. In fact, $U^{*}=T^{*}\left(L^{\dagger
		}\right)^{*}$.
		
		$(3)\Longrightarrow (2)$ Suppose that (3) holds. then there exists  $U \in Hom_{\mathcal{A}}^{*}(\mathcal{X})$ such that $T=L U$, $T T^{*}=L U U^{*} U^{*} \leq\|U\|_{\infty}^{2} L L^{*}$, so $T T^{*} \leq \lambda^{2} L L^{*}$, where $\lambda=\|U\|_{\infty}$.
		
		$(2) \Longrightarrow (3)$ Let be $U_{1}$  a module map defined as follow $U_{1}: L^{*} \mathcal{X} \longrightarrow T^{*} \mathcal{X}$ such that $U_{1} L^{*}(\xi)=T^{*}(\xi), \forall \xi \in \mathcal{X}.$ By $(2)$ we have that $\bar{p}_{\mathcal{X}}(U_{1}\left(L^{*} \xi\right))^{2}=\bar{p}_{\mathcal{X}}(T^{*})^{2}=p(\sqrt{\left\langle T T^{*} \xi, \xi\right\rangle}) \leq p(\sqrt{\lambda^{2}\left\langle L L^{*} \xi, \xi\right\rangle})=\lambda^{2}\bar{p}_{\mathcal{X}}(L^{*} \xi)^{2}$. As well as $U_{1}$ is well defined and uniformly bounded. Since $R(L)$ is closed, and also $R\left(L^{*}\right)$ \cite[Theorem 3.2.4]{Joitaa}. Out of the observation above, setting $U_{1} \xi=0, \xi \in\left(L^{*} \mathcal{X}\right)^{\perp}$, it is simple to verify that $U_{1}$ is bounded in $\mathcal{X}$, and from the construction of $U_{1}$ we have $U_{1} L^{*}=T^{*}$. Notify that $L^{*}\left(L^{*}\right)^{\dagger} L^{*}=L^{*}$, so $U_{1} L^{*}(\xi)=$ $U_{1} L^{*}\left(\left(L^{*}\right)^{\dagger} L^{*}(\xi)\right)=T^{*}\left(\left(L^{*}\right)^{\dagger} L^{*}(\xi)\right)$, from which it results that $T^{*}=T^{*}\left(L^{*}\right)^{\dagger} L^{*}$, i.e, $T=L L^{\dagger} T$ (in this case by using the fact $\left(L^{\dagger}\right)^{*}=\left(L^{*}\right)^{\dagger}$), from which $T=L U$, where $U=L^{\dagger} T$.

	\end{proof}

	\begin{theorem}\label{3.4}
		Let  $T, L \in Hom_{\mathcal{A}}^{*}(\mathcal{X})$ be two uniformly bounded operators, then
		$$
		Ran(T)+Ran(L)=Ran(\sqrt {T T^{\ast}+L L^{*}}) .
		$$
	\end{theorem}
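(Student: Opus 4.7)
My plan is to write $S = \sqrt{TT^{\ast}+LL^{\ast}}$, a positive element of $\operatorname{Hom}_{\mathcal{A}}^{\ast}(\mathcal{X})$ with $S^{2}=TT^{\ast}+LL^{\ast}$, and to reduce both inclusions to applications of Theorem \ref{3.3}. The inclusion $\operatorname{Ran}(T)+\operatorname{Ran}(L)\subseteq\operatorname{Ran}(S)$ follows from the pointwise inequalities $TT^{\ast}\leq TT^{\ast}+LL^{\ast}=SS^{\ast}$ and $LL^{\ast}\leq SS^{\ast}$ in $\operatorname{Hom}_{\mathcal{A}}^{\ast}(\mathcal{X})^{+}$ (with $\alpha=1$), which via Theorem \ref{3.3}, played by $L\leadsto S$, give $\operatorname{Ran}(T)\subseteq\operatorname{Ran}(S)$ and $\operatorname{Ran}(L)\subseteq\operatorname{Ran}(S)$.

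For the reverse inclusion, the key trick is to package $T$ and $L$ into a single row operator on the direct sum Hilbert $\mathcal{A}$-module $\mathcal{X}\oplus\mathcal{X}$, namely
$$W:\mathcal{X}\oplus\mathcal{X}\longrightarrow\mathcal{X},\qquad W(\alpha,\beta)=T\alpha+L\beta.$$
Then $W\in\operatorname{Hom}_{\mathcal{A}}^{\ast}(\mathcal{X}\oplus\mathcal{X},\mathcal{X})$ with adjoint $W^{\ast}\xi=(T^{\ast}\xi,L^{\ast}\xi)$, so that $WW^{\ast}=TT^{\ast}+LL^{\ast}=SS^{\ast}$ and moreover $\operatorname{Ran}(W)=\operatorname{Ran}(T)+\operatorname{Ran}(L)$ by construction. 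Reading the identity $WW^{\ast}=SS^{\ast}$ now in the direction $SS^{\ast}\leq 1\cdot WW^{\ast}$, a second application of Theorem \ref{3.3}, this time with $L\leadsto W$ and $T\leadsto S$, yields $\operatorname{Ran}(S)\subseteq\operatorname{Ran}(W)=\operatorname{Ran}(T)+\operatorname{Ran}(L)$, completing the equality.

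The step I expect to be delicate is the legality of these two appeals to Theorem \ref{3.3}: that theorem is stated under the hypothesis that the range of the ``denominator'' operator is closed, and here neither $\operatorname{Ran}(S)$ nor $\operatorname{Ran}(W)$ need be closed in general. If this hypothesis is taken to be essential, the remedy is to avoid Theorem \ref{3.3} and instead build the factorizations $T=SC_{1}$, $L=SC_{2}$, and $S=WC_{3}$ directly in Fillmore--Williams style: on the relevant range one defines, say, $C_{1}^{\ast}(S\xi)=T^{\ast}\xi$, checks well-definedness via $\ker(S)\subseteq\ker(T^{\ast})$ (which is immediate from $\|T^{\ast}\xi\|^{2}\leq\|S\xi\|^{2}$ inside each seminorm, thanks to Proposition \ref{prop2.11} and Lemma \ref{lem2.11}), extends by zero on the orthogonal complement, and verifies adjointability using the inner-product identity; the symmetric construction handles $W$ in place of $S$. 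Either way, the content beyond Theorem \ref{3.3} is purely the observation $WW^{\ast}=SS^{\ast}$ together with the seminorm estimate above.
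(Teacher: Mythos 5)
Your argument is in substance the same as the paper's. The paper forms the matrix $M=\left(\begin{smallmatrix} T & -L \\ 0 & 0 \end{smallmatrix}\right)$ on $\mathcal{X}\oplus\mathcal{X}$ and invokes the identity $Ran(M)=Ran\bigl((MM^{\ast})^{1/2}\bigr)$; your row operator $W$ is precisely the nonzero row of $M$ (the sign on $L$ is immaterial for ranges), and your two appeals to Theorem \ref{3.3} --- one for each inclusion, exploiting $WW^{\ast}=SS^{\ast}$ --- are exactly the standard double-Douglas derivation of that identity. The only difference is that you make explicit the mechanism that the paper cites as a one-line fact.

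The delicate point you flag is, however, a genuine gap, and your proposed fallback does not repair it. In a Hilbert (pro-)$C^{\ast}$-module the implication ``$TT^{\ast}\le\alpha^{2}LL^{\ast}$ implies $Ran(T)\subseteq Ran(L)$'' is exactly the part of Theorem \ref{3.3} that requires the closed-range hypothesis, and neither $Ran(S)$ nor $Ran(W)$ need be closed here. The Fillmore--Williams construction you sketch breaks down at two places in this setting: without closed range, $\overline{Ran(S)}$ need not be orthogonally complemented (Theorem \ref{2.10} is only available for closed ranges), so ``extend by zero on the orthogonal complement'' is not licensed; and a bounded module map defined on a dense submodule need not extend to an \emph{adjointable} map on all of $\mathcal{X}$, since $Hom_{\mathcal{A}}^{\ast}$ is not closed under such extensions the way $B(H)$ is. So as written your proof is valid only when $Ran(S)$ and $Ran(W)$ are closed. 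To be fair, the paper's own proof has exactly the same unjustified step, since $Ran(M)=Ran\bigl((MM^{\ast})^{1/2}\bigr)$ is nothing other than the two Douglas applications you perform; you have simply made the shared gap visible.
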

	\begin{proof}
		Let $M= \left(\begin{array}{cc}
			T &-L \\
			0 & 0
		\end{array}\right)$ on $\mathcal{X}\oplus \mathcal{X}$. Then we have 
		\begin{align*}
			( Ran(T)+Ran(L) ) \oplus \{0\}&=Ran(M)\\&=Ran(M M^{\ast})^{1 / 2}\\&=Ran\left(\begin{array}{cc}
				\left(TT^{*}+L L^{*}\right)^{1 / 2} & 0 \\
				0 & 0
			\end{array}\right)
			\\&=Ran\left(\sqrt{T T^{*}+LL^{*}}\right) \oplus\{0\} .
		\end{align*}
		Which gives $$
		Ran(T)+Ran(L)=Ran(\sqrt {T T^{\ast}+L L^{*}}) .
		$$

	\end{proof}
	\begin{corollary} \label{crll3.5}
		Let $T, L_{1}, L_{2} \in Hom_{\mathcal{A}}^{\ast}(\mathcal{X})$. The statements below are equivalent:
		\begin{enumerate}
			\item[(1)] $Ran(T) \subset Ran\left(L_{1}\right)+Ran \left(L_{2}\right)$.
			\item[(2)] $T T^{*} \leqslant \alpha^{2}\left(L_{1} L_{1}^{\ast}+L_{2} L_{2}^{*}\right)$ for some constant $\alpha>0$.
			\item[(3)] There are two uniformly bounded operators $U$ and $V$ such that $T=L_{1} U+L_{2} V$.
		\end{enumerate}
	\end{corollary}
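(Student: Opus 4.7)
The plan is to reduce Corollary \ref{crll3.5} to the single-operator Douglas theorem (Theorem \ref{3.3}) by bundling $L_1$ and $L_2$ into one map. On the Hilbert $\mathcal{A}$-module $\mathcal{X} \oplus \mathcal{X}$, I would define $M \colon \mathcal{X} \oplus \mathcal{X} \to \mathcal{X}$ by $M(\xi_1,\xi_2) = L_1\xi_1 + L_2\xi_2$. A routine computation identifies its adjoint as $M^{*}\xi = (L_1^{*}\xi, L_2^{*}\xi)$, so that $MM^{*} = L_1 L_1^{*} + L_2 L_2^{*}$, and at the same time $Ran(M) = Ran(L_1) + Ran(L_2)$.

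With this reduction in hand, the three conditions of the corollary become the three conditions of Theorem \ref{3.3} for the pair $(T,M)$: condition (1) is exactly $Ran(T) \subseteq Ran(M)$; condition (2) is $TT^{*} \leq \alpha^{2} MM^{*}$; and condition (3) is equivalent to the existence of a uniformly bounded $W \in Hom_{\mathcal{A}}^{*}(\mathcal{X},\mathcal{X}\oplus\mathcal{X})$ with $T = MW$, since any such $W$ decomposes as $W\xi = (U\xi, V\xi)$ with $U,V$ uniformly bounded on $\mathcal{X}$, and $MW\xi = L_1 U\xi + L_2 V\xi$. Two implications are then immediate from this dictionary: $(3) \Rightarrow (1)$ is clear from $T\xi = L_1(U\xi) + L_2(V\xi)$, and $(3) \Rightarrow (2)$ follows from the factorization $TT^{*} = MWW^{*}M^{*} \leq \|W\|_{\infty}^{2} MM^{*}$, using only the inequality $WW^{*} \leq \|W\|_{\infty}^{2} I$ on $\mathcal{X}\oplus\mathcal{X}$ together with positivity preservation under conjugation by $M$.

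For the remaining direction, the natural move is to invoke Theorem \ref{3.3} directly for the pair $(T,M)$, which upgrades either (1) or (2) to a factorization $T = MW$ and hence to (3). The main obstacle is that Theorem \ref{3.3} was proved under the assumption that the range of the outer operator is closed, whereas $Ran(M) = Ran(L_1) + Ran(L_2)$ is a sum of ranges and need not be closed. The tool to bypass this is Theorem \ref{3.4}, which yields
\[
Ran(L_1) + Ran(L_2) \;=\; Ran\bigl(\sqrt{L_1 L_1^{*} + L_2 L_2^{*}}\bigr) \;=\; Ran\bigl(\sqrt{MM^{*}}\bigr),
\]
so that $Ran(T) \subseteq Ran(M)$ is the same condition as $Ran(T) \subseteq Ran(\sqrt{MM^{*}})$, and (2) is equivalent to $TT^{*} \leq \alpha^{2} \sqrt{MM^{*}}(\sqrt{MM^{*}})^{*}$. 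Carefully handling this range-equivalence step, transferring the closed-range hypothesis of Theorem \ref{3.3} through the positive square root, and then reading off the components $U, V$ from the resulting $W$ to obtain $T = L_1 U + L_2 V$ with uniformly bounded $U, V$, is the delicate part of the argument.
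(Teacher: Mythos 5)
Your proposal is essentially the paper's own argument: the paper bundles $L_{1},L_{2}$ into the operator $N=\left(\begin{smallmatrix} L_{1} & L_{2} \\ 0 & 0 \end{smallmatrix}\right)$ on $\mathcal{X}\oplus\mathcal{X}$ (your $M$ padded with a zero row, so that $NN^{*}$ and $Ran(N)$ reproduce $L_{1}L_{1}^{*}+L_{2}L_{2}^{*}$ and $Ran(L_{1})+Ran(L_{2})$ up to the $\{0\}$ summand), derives $(1)\Leftrightarrow(2)$ from Theorems \ref{3.2} and \ref{3.4}, and gets $(1)\Rightarrow(3)$ by factoring $\left(\begin{smallmatrix} T & 0 \\ 0 & 0 \end{smallmatrix}\right)=NU$ via Theorem \ref{3.3}. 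The closed-range obstruction you single out as the delicate point is genuine, but note that the paper's proof simply ignores it (the corollary carries no closedness hypothesis, yet Theorem \ref{3.3} requires $Ran(L)$ closed), and your proposed repair via Theorem \ref{3.4} does not actually remove it, since $Ran\bigl(\sqrt{MM^{*}}\bigr)$ need not be closed either --- so on this point your attempt is no worse, and somewhat more honest, than the published argument.
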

	\begin{proof}
		By using Theorems \ref{3.2} and \ref{3.4} we observe that the conditions $(1)$ and $(2)$ are equivalent. 
		
		Since it is evident that the condition $(3)$ implies $(1)$, it is enough to prove that the statement $(1)$ implies $(3)$.
		
		Now if we assume that $M=\left(\begin{array}{ll}
			T & 0 \\
			0 & 0
		\end{array}\right) $ and $ N=\left(\begin{array}{ll}
			L_{1} & L_{2} \\
			0 & 0
		\end{array}\right)$, the assertion $(1)$ implies $Ran(M) \subset Ran(N)$. Therefore $M=NU$ for some $2\times2$ matrix $U$ and then Theorem \ref{3.2} implies $(3)$.

	\end{proof}
	
	\begin{lemma}\label{lem3.7}
		Let $\mathcal{A}$ be a pro-$C^{*}$-algebra and $\alpha, \beta \in \mathcal{A}^{+}$be such that
		$$
		p(\alpha \lambda) \leq p(\beta \lambda) \text { for all } \lambda \in \mathcal{A}^{+} .
		$$
		Then $\alpha^{2} \leq \beta^{2}$.
	\end{lemma}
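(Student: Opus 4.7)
The plan is to test the hypothesis against a resolvent-type positive element $\lambda_{\varepsilon} = (\beta^{2}+\varepsilon 1)^{-1/2}$ tailored to $\beta$, and then send $\varepsilon \to 0^{+}$. After passing to the unitization of $\mathcal{A}$ (if necessary) so that $\lambda_{\varepsilon}$ makes sense, I would apply the hypothesis at $\lambda = \lambda_{\varepsilon}$ and square using the $C^{\ast}$-identity $p(x)^{2} = p(x^{\ast}x)$: since $\lambda_{\varepsilon}$ and $\alpha$ are self-adjoint,
\begin{equation*}
p\bigl(\lambda_{\varepsilon}\alpha^{2}\lambda_{\varepsilon}\bigr) = p(\alpha\lambda_{\varepsilon})^{2} \leq p(\beta\lambda_{\varepsilon})^{2} = p\bigl(\lambda_{\varepsilon}\beta^{2}\lambda_{\varepsilon}\bigr).
\end{equation*}
Because $\beta^{2}$ and $\lambda_{\varepsilon}$ commute, the right-hand side equals $p\bigl(\beta^{2}(\beta^{2}+\varepsilon 1)^{-1}\bigr)$, and by the continuous functional calculus the spectrum of $\beta^{2}(\beta^{2}+\varepsilon 1)^{-1}$ is contained in $[0,1)$; hence $p(\lambda_{\varepsilon}\alpha^{2}\lambda_{\varepsilon}) \leq 1$ for every $p \in S(\mathcal{A})$.

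Next I would convert this family of seminorm bounds into a single order bound. Since $\lambda_{\varepsilon}\alpha^{2}\lambda_{\varepsilon} = (\alpha\lambda_{\varepsilon})^{\ast}(\alpha\lambda_{\varepsilon})$ is positive, and positivity in a pro-$C^{\ast}$-algebra is determined fiberwise on the $C^{\ast}$-quotients $\mathcal{A}/\ker p$ (on each of which the standard fact ``positive of norm $\leq 1$ implies $\leq 1$ in the order'' applies), lifting back yields $\lambda_{\varepsilon}\alpha^{2}\lambda_{\varepsilon} \leq 1$ in $\mathcal{A}$. Conjugating both sides by $(\beta^{2}+\varepsilon 1)^{1/2}$ via Lemma~\ref{lem2.11} collapses the left-hand side to $\alpha^{2}$ and the right-hand side to $\beta^{2}+\varepsilon 1$, producing $\alpha^{2} \leq \beta^{2}+\varepsilon 1$. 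Finally, letting $\varepsilon \to 0^{+}$ and using closedness of the positive cone $\mathcal{A}^{+}$ in the pro-$C^{\ast}$-topology gives the desired $\alpha^{2} \leq \beta^{2}$.

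The main obstacle I anticipate is precisely the transition from ``$p(\cdot) \leq 1$ for every seminorm $p$'' to ``$\cdot \leq 1$'' in the order: this is not a formal manipulation, but genuinely exploits the projective-limit structure of the pro-$C^{\ast}$-algebra together with the $C^{\ast}$-identity in each quotient. A secondary technicality is that $\lambda_{\varepsilon}$ lives a priori only in the unitization $\tilde{\mathcal{A}}$, so the hypothesis must first be extended to positive elements of $\tilde{\mathcal{A}}$ via the natural extensions of the seminorms; this is routine but should be acknowledged.
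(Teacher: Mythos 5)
Your argument is correct, but it takes a genuinely different route from the paper. The paper argues by contradiction: assuming $\alpha^{2}\not\leq\beta^{2}$, it sets $m=\max\operatorname{sp}(\alpha^{2}-\beta^{2})>0$, builds the test element $\lambda=f(\alpha^{2}-\beta^{2})$ with $f$ vanishing on $(-\infty,m/2]$, and uses a state attaining $p(\lambda\beta^{2}\lambda)$ to force the strict inequality $p(\alpha\lambda)>p(\beta\lambda)$ --- essentially the Fang--Moslehian--Xu argument transplanted to the pro-$C^{\ast}$ setting. Your direct route through $\lambda_{\varepsilon}=(\beta^{2}+\varepsilon 1)^{-1/2}$ avoids both the contradiction and the use of states, and the two reductions you flag as the crux are sound: positivity in a pro-$C^{\ast}$-algebra is detected in the $C^{\ast}$-quotients $\mathcal{A}/\ker p$, so ``$x\geq 0$ and $p(x)\leq 1$ for all $p$'' does yield $x\leq 1$; and closedness of the positive cone lets $\varepsilon\to 0^{+}$ go through. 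The one step you should not wave off as routine is the extension of the hypothesis from $\mathcal{A}^{+}$ to $\tilde{\mathcal{A}}^{+}$: an approximate unit satisfies $e_{i}\to 1$ only strictly, so you cannot approximate $\lambda_{\varepsilon}$ in the topology by $\lambda_{\varepsilon}e_{i}$; instead, for $\mu\in\tilde{\mathcal{A}}^{+}$ you must test the hypothesis against $\mu^{1/2}e_{i}\mu^{1/2}\in\mathcal{A}^{+}$ and use that $\alpha\mu^{1/2}$ and $\beta\mu^{1/2}$ lie in the ideal $\mathcal{A}$ to pass to the limit in $p(\alpha\mu^{1/2}e_{i}\mu^{1/2})\leq p(\beta\mu^{1/2}e_{i}\mu^{1/2})$. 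The paper's choice of test element sidesteps this issue entirely, since $f(0)=0$ forces $f(\alpha^{2}-\beta^{2})\in\mathcal{A}^{+}$; that is the price your cleaner, more direct argument pays.
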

	\begin{proof}
		Without loss of generality, assume that $p(\alpha) \leq 1$ and $p(\beta) \leq 1$ . Suppose that the inequality $\alpha^{2} \leq \beta^{2}$ is not true. \\Then there exists $x_{0} \in \operatorname{sp}\left(\alpha^{2}-\beta^{2}\right)$ such that $x_{0}>0$, where $\operatorname{sp}\left(\alpha^{2}-\beta^{2}\right)$ denotes the spectrum of $\alpha^{2}-\beta^{2}$. It follows that
		$$
		m=\max \left\{x: x \in \operatorname{sp}\left(\alpha^{2}-\beta^{2}\right)\right\}>0
		$$
		Let $f$ be any continuous real-valued function defined on the real line such that
		$$
		\left\{
		\begin{array}{ll}
			&0 \leq f(t) \leq 1 \text { for all } t \in \mathbb{R} \\
			&f(t)=0 \text { for all } t \in\left(-\infty, \frac{m}{2}\right] \\
			&f(t)=1 \text { for all } t \in[m,+\infty)
		\end{array}
		\right.
		$$
		Choose $\lambda=f\left(\alpha^{2}-\beta^{2}\right) .$ Then $\lambda \in \mathcal{A}^{+}$since $f$ is non-negative. By use of the functional calculus, we have
		\begin{equation}\label{3.5}
			p(\lambda\left(\alpha^{2}-\beta^{2}\right) \lambda)=\max \left\{x f(x)^{2}: x \in \operatorname{sp}\left(\alpha^{2}-\beta^{2}\right)\right\}=m>0
		\end{equation}
		Since $x>\frac{m}{2}$ whenever $f(x) \neq 0$, it follows again by use of the functional calculus that
		\begin{equation}\label{eq3.4}
			\lambda\left(\alpha^{2}-\beta^{2}\right) \lambda \geq \frac{m}{2} \lambda^{2} .
		\end{equation}
		Note that $\lambda \beta^{2} \lambda$ is self-adjoint, so there exists a state $\rho$ acting on $\mathcal{A}$ such that $\phi\left(\lambda \beta^{2} \lambda\right)=p(\lambda \beta^{2} \lambda)$. Then by \ref{eq3.4} and the assumption $p(\beta) \leq 1$, we have
		\begin{align*}
			p(\beta \alpha^{2} \beta) \geq \phi\left(\lambda \alpha^{2} \lambda\right) &\geq \phi\left(\lambda \beta^{2} \lambda\right)+\frac{m}{2} \phi \left(\lambda^{2}\right) \\&\geq \frac{m+2}{2} \phi\left(\lambda \beta^{2} \lambda\right)=\frac{m+2}{2}p(\lambda \beta^{2} \lambda)
		\end{align*}
		This shows that if $\lambda \beta^{2} \lambda \neq 0$, then $p(\lambda \alpha^{2} \lambda)>p(\lambda \beta^{2} \lambda) .$ On the other hand, if $\lambda \beta^{2} \lambda=0$, then it follows from \ref{3.5} that $p(\lambda \alpha^{2} \lambda)>0$. So in either case, we have
		$$
		p(\alpha \lambda)^{2}=p(\lambda \alpha^{2} \lambda)>p(c b^{2} c)=p(\beta \lambda)^{2},
		$$
		in contradiction to the assumption that $p(\alpha \lambda) \leq p(\beta \lambda)$.

	\end{proof}
	\begin{theorem}\label{th3.8}
		Let $\mathcal{X}, \mathcal{Y}$ be two Hilbert $\mathcal{A}$-modules, and let $T \in Hom_{\mathcal{A}}^{\ast}(\mathcal{X},\mathcal{Y})$ and $L \in  Hom_{\mathcal{A}}^{\ast}(\mathcal{X},\mathcal{Y}).$ Then the following two statements are equivalent:
		\begin{enumerate}
			\item[(i)] $T T^{*} \leq L L^{*}$;
			\item[(ii)] $\bar{p}_{\mathcal{X}}(T^{*} \xi) \leq \bar{p}_{\mathcal{X}}(L^{*} \xi)$ for all $\xi \in \mathcal{Y}$.
		\end{enumerate}
	\end{theorem}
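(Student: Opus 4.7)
The plan is to prove the two directions separately, with direction (i) $\Rightarrow$ (ii) following directly from earlier lemmas, while (ii) $\Rightarrow$ (i) requires using Lemma \ref{lem3.7} as the key technical bridge, since seminorm inequalities do not, by themselves, directly encode order relations in $\mathcal{A}^+$.

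For (i) $\Rightarrow$ (ii): I would start from $TT^{\ast}\leq LL^{\ast}$ and use Proposition \ref{prop2.11} to rewrite this as $\langle(LL^{\ast}-TT^{\ast})\xi,\xi\rangle\geq 0$ in $\mathcal{A}^+$ for every $\xi\in\mathcal{Y}$. Applying the adjoint identity rewrites this as $\langle T^{\ast}\xi,T^{\ast}\xi\rangle\leq\langle L^{\ast}\xi,L^{\ast}\xi\rangle$. Then Lemma \ref{lem2.11} gives $p(\langle T^{\ast}\xi,T^{\ast}\xi\rangle)\leq p(\langle L^{\ast}\xi,L^{\ast}\xi\rangle)$, and taking square roots yields $\bar{p}_{\mathcal{X}}(T^{\ast}\xi)\leq \bar{p}_{\mathcal{X}}(L^{\ast}\xi)$, as desired.

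For (ii) $\Rightarrow$ (i), which is the main difficulty: fix $\xi\in\mathcal{Y}$ and set $\alpha=\langle T^{\ast}\xi,T^{\ast}\xi\rangle^{1/2}$ and $\beta=\langle L^{\ast}\xi,L^{\ast}\xi\rangle^{1/2}$, both in $\mathcal{A}^+$. The goal is to verify the hypothesis of Lemma \ref{lem3.7} for $\alpha$ and $\beta$. The trick is to replace $\xi$ by $\lambda\xi$ for an arbitrary $\lambda\in\mathcal{A}^+$; since $\mathcal{Y}$ is a left $\mathcal{A}$-module and $T^{\ast},L^{\ast}$ are module maps, I compute $T^{\ast}(\lambda\xi)=\lambda T^{\ast}\xi$ and $\langle\lambda T^{\ast}\xi,\lambda T^{\ast}\xi\rangle=\lambda\alpha^{2}\lambda$, and similarly for $L$. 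Applying (ii) to $\lambda\xi$ and squaring gives $p(\lambda\alpha^{2}\lambda)\leq p(\lambda\beta^{2}\lambda)$, and since $p(\lambda\alpha^{2}\lambda)=p((\alpha\lambda)^{\ast}(\alpha\lambda))=p(\alpha\lambda)^{2}$ (using the $C^{\ast}$-seminorm property and self-adjointness of $\alpha$ and $\lambda$), this yields precisely $p(\alpha\lambda)\leq p(\beta\lambda)$ for all $p\in S(\mathcal{A})$ and all $\lambda\in\mathcal{A}^+$.

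Invoking Lemma \ref{lem3.7} then gives $\alpha^{2}\leq\beta^{2}$, i.e.\ $\langle T^{\ast}\xi,T^{\ast}\xi\rangle\leq\langle L^{\ast}\xi,L^{\ast}\xi\rangle$, or equivalently $\langle(LL^{\ast}-TT^{\ast})\xi,\xi\rangle\geq 0$. Since $\xi\in\mathcal{Y}$ was arbitrary, Proposition \ref{prop2.11} yields $TT^{\ast}\leq LL^{\ast}$, closing the argument. The main obstacle, as noted, is the passage from pointwise seminorm inequalities to an honest order inequality in $\mathcal{A}^+$; the clever idea is to test the hypothesis against $\lambda\xi$ rather than only $\xi$, which is what manufactures the $p(\alpha\lambda)\leq p(\beta\lambda)$ hypothesis needed by Lemma \ref{lem3.7}.
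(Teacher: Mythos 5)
Your proposal is correct and follows essentially the same route as the paper: Proposition \ref{prop2.11} plus Lemma \ref{lem2.11} for (i) $\Rightarrow$ (ii), and for (ii) $\Rightarrow$ (i) the identical trick of testing the seminorm inequality against $\lambda\xi$ for $\lambda\in\mathcal{A}^{+}$ to manufacture the hypothesis $p(\alpha\lambda)\leq p(\beta\lambda)$ of Lemma \ref{lem3.7}. The only difference is notational (you take $\alpha,\beta$ to be the square roots of the inner products, the paper takes the inner products themselves and writes $\alpha^{1/2},\beta^{1/2}$).
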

	\begin{proof}
		$(i)\Rightarrow (ii)$ follows from Proposition \ref{prop2.11} and Lemma \ref{lem2.11}.
		
		$(ii)\Rightarrow (i)$  let $\xi \in \mathcal{Y}$ and $\alpha=\langle T T^{\ast} \xi, \xi \rangle$ and $\beta=\langle L L^{\ast} \xi, \xi \rangle$. Then $\alpha , \beta \in {\mathcal{A}}^{+}$, for any $\lambda \in \mathcal{A}^{+}$ it is established that
		
		\begin{align*}
			p(\alpha^{\frac{1}{2}} \lambda)^{2}&=p((\alpha^{\frac{1}{2}} \lambda)^{\ast}\alpha^{\frac{1}{2}}\lambda)
			=p(\lambda^{\ast}\alpha \lambda)=p(\lambda\alpha \lambda)=p(\lambda\langle T T^{*} \xi, \xi\rangle \lambda)\\&=p(\langle T T^{*}(\xi \lambda),(\xi \lambda)\rangle)
			\leq p(\langle L L^{*}(\xi \lambda),(\xi \lambda)\rangle)=p(\beta^{\frac{1}{2}} \lambda)^{2}
		\end{align*}
		
		As a result, by Lemma \ref{lem3.7}, we have $\alpha \leq \beta$ and so
		$$
		\left\langle T T^{*} \xi, \xi\right\rangle \leq\left\langle LL^{*} \xi, \xi\right\rangle
		$$
		Therefore  by Proposition \ref{prop2.11}, that $T T^{*} \leq L L^{*}$.

	\end{proof}
	\section{Atomic system in $Hom_{\mathcal{A}}^{\ast}(\mathcal{X})$}\label{sec3}
	Next, we introduce the concept of atomic systems for operators in Hilbert pro-$C^{\ast}$-modules and we establish some results.
	\begin{definition}
		Let $K \in Hom_{\mathcal{A}}^{\ast}(\mathcal{X})$, we say that $\{\xi_{i}\}_{i=1}^{\infty}$ is an atomic system for $K$ if :
		\begin{enumerate}
			\item[i)] The serie $\sum_{i} m_{i} \xi_{i}$ converges for all $m=\left(m_{i}\right) \in \mathcal{H}_{\mathcal{A}}$;
			\item[ii)]  There exists $C>0$ such that for every $x \in \mathcal{X}$ there exists $m_{x}=\left(m_{i}\right) \in \mathcal{H}_{\mathcal{A}}$ such that $\langle m_{x},m_{x}\rangle_{\mathcal{H}_{\mathcal{A}}} \leqslant C \langle x, x \rangle_{\mathcal{X}}$ and $K x=\sum_{i} m_{i} \xi_{i}$.
		\end{enumerate}
	\end{definition}
	\begin{proposition}
		Suppose that  $\left\{\xi_{i}\right\}_{i=1}^{\infty}$ is a Bessel sequence in $\mathcal{X}$. Then $\left\{\xi_{i}\right\}_{i=1}^{\infty}$ is an atomic system for  $L$, where $L$ is the frame operator.
	\end{proposition}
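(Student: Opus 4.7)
The plan is to verify the two defining conditions of an atomic system directly from the Bessel hypothesis. For property (i), I would exhibit the synthesis map $m \mapsto \sum_i m_i \xi_i$ as a well-defined continuous module map $\mathcal{H}_{\mathcal{A}} \to \mathcal{X}$. Given $m = (m_i)_i \in \mathcal{H}_{\mathcal{A}}$, I would show the partial sums $s_N = \sum_{i=1}^N m_i \xi_i$ form a Cauchy net in every seminorm $\bar p_{\mathcal{X}}$ by pairing $s_M - s_N$ against arbitrary $\eta \in \mathcal{X}$ with $\bar p_{\mathcal{X}}(\eta) \leq 1$: Cauchy--Schwarz for the $\mathcal{A}$-valued inner product together with the Bessel bound reduces the estimate to $\sum_{i=N+1}^M m_i^* m_i$, which tends to zero in $\mathcal{A}$ by the defining property of membership in $\mathcal{H}_{\mathcal{A}}$.

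For property (ii), the natural choice is the analysis coefficients $m_x = (\langle \xi_i, x\rangle)_i$ (or the analysis coefficients of $Lx$, depending on the convention one reads into the frame-operator definition). The Bessel inequality applied to $x$ then yields
\begin{equation*}
\langle m_x, m_x\rangle_{\mathcal{H}_{\mathcal{A}}} \;=\; \sum_i \langle x, \xi_i\rangle\langle \xi_i, x\rangle \;\leq\; D\,\langle x,x\rangle_{\mathcal{X}},
\end{equation*}
so $m_x \in \mathcal{H}_{\mathcal{A}}$ with the bound $C = D$ (or $C = D\,\|L\|_\infty^{2}$ after an extra invocation of Proposition \ref{Prop2.6} if the analysis is done on $Lx$ rather than $x$). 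The reconstruction identity $Lx = \sum_i m_i \xi_i$ then drops out of the very definition of the frame operator, which by construction synthesises the analysis coefficients back to the target vector.

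The main obstacle I anticipate is reconciling the Bessel-only hypothesis with the paper's specific definition of the frame operator $L$, which was introduced for \emph{standard frames of multipliers} (requiring both bounds) and was declared invertible: one must either interpret $L$ here as the synthesis-of-analysis operator $\xi \mapsto \sum_i \xi_i\langle \xi_i,\xi\rangle$ that exists for any Bessel sequence, or compose with $L$ appropriately so that the coefficients $m_x$ really reproduce $Lx$ on the nose. A secondary technical point is to carry the Cauchy argument in (i) out uniformly in every continuous $C^{\ast}$-seminorm, rather than in a single norm as in the Hilbert-space case; this is where the family structure of pro-$C^{\ast}$-algebras forces slightly more bookkeeping than the standard argument.
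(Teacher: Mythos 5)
Your proposal is correct and follows essentially the same route as the paper: take $m_x$ to be the analysis coefficients $(\langle x,\xi_i\rangle)_i$, get the coefficient bound $\langle m_x,m_x\rangle \leq D\langle x,x\rangle$ from the Bessel inequality, and read off the reconstruction $Lx=\sum_i m_i\xi_i$ from the identification $L=\theta^{*}\theta$ (which is exactly how the paper resolves the ambiguity you flag about its frame-operator definition). Your treatment is in fact more complete than the paper's, which never explicitly verifies the convergence in condition (i) that you establish via the Cauchy/Cauchy--Schwarz argument.
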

	\begin{proof}
		We have the module morphism 
		\begin{center}
			$\theta : \mathcal{X} \rightarrow {\mathcal{H}_{\mathcal{A}}} $ defined by $\theta x= \{\left\langle x, \xi_{i}\right\rangle\}_{i=1}^{\infty}$,
		\end{center}
		and its adjoint
		\begin{center}
			$\theta : {\mathcal{H}_{\mathcal{A}}}  \rightarrow \mathcal{X}$ defined by $\theta \left(\left\{c_{i}\right\}_{i=1}^{\infty}\right)=\sum_{i=1}^{\infty} c_{i} \xi_{i}$.
		\end{center}
		Let $L=\theta^{\ast}\theta$. Then 
		\begin{center}
			$L: \mathcal{X} \rightarrow \mathcal{X}, \quad L x=\sum_{i=1}^{\infty}\left\langle x, \xi_{i}\right\rangle \xi_{i}$
		\end{center}
		Let $a_{\xi}=\{a_{i}\}=\{ \langle \xi , \xi_{i}\rangle\}_{i=1}^{\infty} \in{\mathcal{H}_{\mathcal{A}}} $
		
		Now
		$$\begin{aligned}
			\bar{p}_{\mathcal{X}}(a_{\xi})^{2}=&p(\langle a_{\xi} , a_{\xi}\rangle)\\
			=&p\left(\sum_{i \in I}\left\langle \xi, \xi_{i}\right\rangle_{M(\mathcal{X})}\left\langle \xi_{i}, \xi\right\rangle_{M(\mathcal{X})}\right) \\
			\leq& D		\bar{p}_{\mathcal{X}}(\xi)^{2}
		\end{aligned}$$
		Then, $\left\{\xi_{i}\right\}_{i=1}^{\infty}$ is an atomic system for $L$.

	\end{proof}
	\begin{lemma}\label{lem3.3}
		Let $\mathcal{A}$ be  a unital pro-$C^{*}$-algebra   and  $\left\{\xi_{i}\right\}_{j \in I}$ be a sequence of a finitely or countably generated Hilbert $\mathcal{A}$-module $\mathcal{X}$ over $\mathcal{A}$. Then $\left\{\xi_{i}\right\}_{i \in I}$ is a Bessel sequence with bound $D$ if and only if
		$$
		p(\sum_{i \in I}\left\langle x, \xi_{i}\right\rangle\left\langle \xi_{i}, x\right\rangle) \leq D\bar{p}_{\mathcal{X}}(\xi)^{2}
		$$ for all $\xi \in \mathcal{X}.$
	\end{lemma}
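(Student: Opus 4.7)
The plan is to treat the two implications separately, leveraging the earlier Lemmas \ref{lem2.11} and \ref{lem3.7}. The forward direction is essentially free: if $\{\xi_{i}\}_{i\in I}$ is a Bessel sequence with bound $D$, then
\[
\sum_{i\in I}\langle \xi,\xi_{i}\rangle\langle \xi_{i},\xi\rangle \,\leq\, D\langle \xi,\xi\rangle
\]
holds in $\mathcal{A}^{+}$, and monotonicity of each $p\in S(\mathcal{A})$ on the positive cone (Lemma \ref{lem2.11}) yields $p\bigl(\sum_{i\in I}\langle \xi,\xi_{i}\rangle\langle \xi_{i},\xi\rangle\bigr) \leq D\,p(\langle \xi,\xi\rangle) = D\,\bar{p}_{\mathcal{X}}(\xi)^{2}$.

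For the converse, fix $\xi\in\mathcal{X}$ and write $\alpha = \sum_{i\in I}\langle \xi,\xi_{i}\rangle\langle \xi_{i},\xi\rangle$ and $\beta = D\langle \xi,\xi\rangle$; both lie in $\mathcal{A}^{+}$. The goal is to upgrade the pointwise seminorm bound to the operator inequality $\alpha \leq \beta$. The key move is to feed the twisted vector $\lambda\xi$ (for an arbitrary $\lambda\in\mathcal{A}^{+}$) into the standing hypothesis. A short computation using the $\mathcal{A}$-linearity of $\langle\cdot,\cdot\rangle$ in the first slot, the derived identity $\langle u,a v\rangle = \langle u,v\rangle a^{*}$, and $\lambda=\lambda^{*}$ gives
\[
\sum_{i\in I}\langle \lambda\xi,\xi_{i}\rangle\langle \xi_{i},\lambda\xi\rangle = \lambda\alpha\lambda, \qquad \langle \lambda\xi,\lambda\xi\rangle = \lambda\langle \xi,\xi\rangle\lambda,
\]
so the hypothesis applied at $\lambda\xi$ reads $p(\lambda\alpha\lambda) \leq p(\lambda\beta\lambda)$, valid for every $p\in S(\mathcal{A})$ and every $\lambda\in\mathcal{A}^{+}$.

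The last step is to invoke Lemma \ref{lem3.7} on the positive square roots $\alpha^{1/2}$ and $\beta^{1/2}$. By the $C^{*}$-identity,
\[
p(\alpha^{1/2}\lambda)^{2} = p(\lambda\alpha\lambda) \leq p(\lambda\beta\lambda) = p(\beta^{1/2}\lambda)^{2}
\]
for every $\lambda\in\mathcal{A}^{+}$, so Lemma \ref{lem3.7} forces $(\alpha^{1/2})^{2} \leq (\beta^{1/2})^{2}$, i.e., $\alpha \leq \beta$, which is precisely the Bessel inequality. The main obstacle is spotting the substitution $\xi\mapsto\lambda\xi$ that recasts the pointwise seminorm estimate in the form required by Lemma \ref{lem3.7}; once that is in hand, the algebra and the appeal to the two auxiliary lemmas are routine.
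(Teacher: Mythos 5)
Your proof is correct, but it takes a genuinely different route from the paper. The paper's converse argument introduces the analysis operator $T:\mathcal{X}\to\mathcal{H}_{\mathcal{A}}$, $T\xi=\sum_{i}\langle \xi,\xi_{i}\rangle e_{i}$, notes that the seminorm hypothesis makes $T$ bounded, and then passes from boundedness to the module inequality $\langle T\xi,T\xi\rangle\leq D\langle\xi,\xi\rangle$ — a step that silently invokes the Paschke-type fact $\langle T\xi,T\xi\rangle\leq\|T\|_{\infty}^{2}\langle\xi,\xi\rangle$ for adjointable module maps (the paper only records this, as Proposition \ref{Prop2.6}, for invertible operators). You instead bypass the operator entirely: the substitution $\xi\mapsto\lambda\xi$ converts the pointwise seminorm bound into $p(\lambda\alpha\lambda)\leq p(\lambda\beta\lambda)$ for all $\lambda\in\mathcal{A}^{+}$, and the $C^{*}$-identity plus Lemma \ref{lem3.7} applied to $\alpha^{1/2},\beta^{1/2}$ yields $\alpha\leq\beta$. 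This is exactly the mechanism the paper deploys later in the proof of Theorem \ref{th3.8} (ii)$\Rightarrow$(i), so your argument is very much in the paper's toolkit even though it is not the proof given here; its advantage is that it is self-contained and plugs the gap in the paper's "then we have $\langle T\xi,T\xi\rangle\leq D\langle\xi,\xi\rangle$" step, at the cost of needing $\alpha\in\mathcal{A}^{+}$ (which holds, since the sum of the positive elements $\langle\xi,\xi_{i}\rangle\langle\xi,\xi_{i}\rangle^{*}$ is assumed to converge) and the routine continuity check that $\sum_{i}\langle\lambda\xi,\xi_{i}\rangle\langle\xi_{i},\lambda\xi\rangle=\lambda\alpha\lambda$.
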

	\begin{proof}
		$"\Rightarrow"$ Evident
		
		$"\Leftarrow"$
		Let $T$ be  a linear operator defined as follow
		$T: \mathcal{X} \rightarrow \mathcal{H}_{\mathcal{A}}$ 
		$$
		T \xi=\sum_{i \in I}\left\langle \xi, \xi_{i}\right\rangle e_{i}, \quad \forall \xi \in \mathcal{X} .
		$$
		Then
		$$
		\bar{p}_{\mathcal{X}}(T\xi)^{2}
		=p(\langle T \xi, T \xi\rangle)=p(\sum_{i \in I}\left\langle \xi, \xi_{i}\right\rangle\left\langle \xi_{i}, \xi\right\rangle) \leq D\bar{p}_{\mathcal{X}}(\xi)^{2},
		$$g
		which implies that  $T$ is bounded.

		It is clear that $T$ is $\mathcal{A}$-linear. Then we have
		$$
		\langle T \xi, T \xi \rangle \leq D\langle \xi, \xi \rangle
		$$
		equivalently, $\sum_{i \in I }\left\langle \xi, \xi_{i}\right\rangle\left\langle \xi_{i}, \xi \right\rangle \leq D\langle \xi, \xi\rangle$, as desired.

	\end{proof}
	\begin{theorem}
		If $K \in Hom_{\mathcal{A}}^{\ast}(\mathcal{X})$, then there exists an atomic system for the operator $K$.
	\end{theorem}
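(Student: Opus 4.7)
The plan is to construct an atomic system for $K$ explicitly by transporting an existing frame of $\mathcal{X}$ through the operator. Assume that $\mathcal{X}$ is finitely or countably generated (as in Lemma \ref{lem3.3}) so that there exists a standard normalized frame of multipliers $\{e_i\}_{i \in I}$ for $\mathcal{X}$. Define the candidate atomic system by
\[
\xi_i := K e_i, \qquad i \in I.
\]
The whole argument then reduces to checking the two defining conditions of an atomic system, each of which follows by transferring the corresponding frame property through the bounded operator $K$.

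First I would verify the convergence condition. For any $m = (m_i) \in \mathcal{H}_{\mathcal{A}}$, the synthesis map associated to $\{e_i\}$ sends $(m_i)$ to a convergent sum $\sum_i m_i e_i \in \mathcal{X}$ (this is where we use that $\{e_i\}$ is a standard frame, via its synthesis operator). Applying $K$ and using that $K \in Hom_{\mathcal{A}}^{\ast}(\mathcal{X})$ is uniformly bounded, we obtain
\[
\sum_i m_i \xi_i \;=\; \sum_i m_i K e_i \;=\; K\!\Bigl(\sum_i m_i e_i\Bigr),
\]
which converges in $\mathcal{X}$ with respect to every seminorm $\bar{p}_{\mathcal{X}}$.

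Next I would verify the reconstruction/bound condition. For $x \in \mathcal{X}$, set $m_x := (\langle x, e_i\rangle)_{i \in I}$. Since $\{e_i\}$ is a normalized frame, the upper frame inequality yields
\[
\langle m_x, m_x\rangle_{\mathcal{H}_{\mathcal{A}}} \;=\; \sum_{i} \langle e_i, x\rangle\langle x, e_i\rangle \;\leq\; C\,\langle x, x\rangle_{\mathcal{X}},
\]
so in particular $m_x \in \mathcal{H}_{\mathcal{A}}$. Combining this with the reconstruction formula $x = \sum_i \langle x, e_i\rangle e_i$ and the continuity of $K$ (with respect to each $\bar{p}_{\mathcal{X}}$) gives
\[
K x \;=\; K\!\Bigl(\sum_i \langle x, e_i\rangle e_i\Bigr) \;=\; \sum_i \langle x, e_i\rangle K e_i \;=\; \sum_i (m_x)_i\, \xi_i,
\]
which is exactly the required identity.

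The main obstacle is the hidden assumption that $\mathcal{X}$ admits a standard frame of multipliers at all; this is not automatic for a general Hilbert pro-$C^{\ast}$-module, and the cleanest fix is to require $\mathcal{X}$ to be finitely or countably generated, as is already done in Lemma \ref{lem3.3}. A secondary technical point is that convergence in a Hilbert pro-$C^{\ast}$-module must be checked relative to the whole family $\{\bar{p}_{\mathcal{X}}\}_{p \in S(\mathcal{A})}$; one should therefore argue convergence seminorm-by-seminorm, using that $K$ is uniformly bounded so the constant $C$ can be chosen independent of $p$. An alternative route, avoiding the explicit frame construction, would be to apply the Douglas factorization Theorem \ref{3.3} to write $K = L U$ for the synthesis operator $L$ of a fixed frame, and read off the coefficients $m_x = U x$; I would fall back on this formulation if the direct verification runs into issues with the module structure on $\mathcal{H}_{\mathcal{A}}$.
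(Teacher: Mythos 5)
Your proposal is correct and follows essentially the same route as the paper: both take a standard (normalized, tight in the paper's case) frame $\{e_i\}$ of $\mathcal{X}$, set $\xi_i = K e_i$, and use the coefficients $m_x = (\langle x, e_i\rangle)_i$ together with the reconstruction formula to verify the atomic-system conditions. The only cosmetic difference is that the paper checks the convergence condition by showing $\{K e_i\}$ is a Bessel sequence and invoking Lemma \ref{lem3.3}, whereas you pull $K$ through the synthesis sum by continuity; both are valid, and your remark that $\mathcal{X}$ must admit a standard frame (e.g.\ be countably generated) is a fair point the paper leaves implicit.
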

	\begin{proof}
		Let $\left\{\xi_{i}\right\}_{i \in I}$ be a standard normalized tight frame for $\mathcal{X}$. Since
		$$
		\xi=\sum_{i \in I}\left\langle \xi, \xi_{i}\right\rangle \xi_{i}, \quad \xi \in \mathcal{X},
		$$
		we have
		$$
		K \xi=\sum_{i \in I}\left\langle \xi, \xi_{i}\right\rangle K \xi_{i}, \quad \xi \in \mathcal{X} .
		$$
		For $\xi \in \mathcal{H}$, putting $a_{i, \xi}=\left\langle \xi, \xi_{i}\right\rangle$ and $f_{i}=K \xi_{i}$ for all $i \in I$, we get
		$$
		\begin{aligned}
			\sum_{i \in I}\left\langle \xi, f_{i}\right\rangle\left\langle f_{i}, \xi\right\rangle &=\sum_{i \in I}\left\langle \xi, K \xi_{i}\right\rangle\left\langle K \xi_{i}, \xi\right\rangle \\
			&=\sum_{i \in I}\left\langle K^{*} \xi, \xi_{i}\right\rangle\left\langle \xi_{i}, K^{*} \xi\right\rangle=\left\langle K^{*} \xi, K^{*} \xi\right\rangle \\
			& \leqslant\left\|K^{*}\right\|_{\infty}^{2}\langle \xi, \xi\rangle .
		\end{aligned}
		$$
		Therefore $\left\{f_{i}\right\}_{i \in I}$ is a Bessel sequence for $\mathcal{H}$ and we conclude that the series $\sum_{i \in I} c_{i} f_{i}$ converges for all $c=\left\{c_{i}\right\}_{i \in I} \in \ell^{2}(A)$ by Lemma \ref{lem3.3}. We also have
		$$
		\sum_{i \in I} a_{i, \xi} a_{i, \xi}^{*}=\sum_{i \in I}\left\langle \xi, \xi_{i}\right\rangle\left\langle \xi_{i}, \xi\right\rangle=\langle \xi, \xi\rangle,
		$$
		which completes the proof.

	\end{proof}
Our next result deals with Bessel sequences.
	\begin{theorem}\label{thm3.5}

		Let $\left\{\xi_{i}\right\}_{i \in I}$ be a Bessel sequence for $\mathcal{X}$ and $K \in Hom_{\mathcal{A}}^{\ast}(\mathcal{X})$. Suppose that $T \in Hom_{\mathcal{A}}^{\ast}\left(\mathcal{X}, \mathcal{H}_{\mathcal{A}}\right)$ is given by $T(\xi)=\left\{\left\langle \xi, \xi_{i}\right\rangle\right\}_{i \in I}$ and $\overline{Ran(T)}$ is orthogonally complemented. Then the following statements are equivalent:
		\begin{enumerate}
			\item $\left\{\xi_{i}\right\}_{i \in I}$ is an atomic system for $K$;
			\item there exists two positives values $C$ and $D$ such that 
			$$
			\begin{aligned}
				&C^{-1}\left(\bar{p}_{\mathcal{X}}\left(K^{*} \xi\right)\right)^{2} \leq p \left(\sum_{i \in l}\left\langle \xi, \xi_{i}\right\rangle_{M(\mathcal{X})}\left\langle \xi_{i}, \xi\right\rangle_{M(\mathcal{X})}\right) 
				&\leq D\left(\bar{p}_{\mathcal{X}}(\xi)\right)^{2} ;
			\end{aligned}
			$$ 
			\item  There exists $Q \in  Hom_{\mathcal{A}}^{\ast}\left(\mathcal{X}, \mathcal{H}_{\mathcal{A}}\right)$ such that $K=T^{*} Q$.
			
		\end{enumerate}
		
	\end{theorem}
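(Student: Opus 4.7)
I will establish the equivalence by the cycle $(3)\Rightarrow(1)\Rightarrow(2)\Rightarrow(3)$, invoking Proposition~\ref{Prop2.6}, the module Cauchy--Schwarz inequality, the functional-calculus machinery behind Theorem~\ref{th3.8} and Lemma~\ref{lem3.7}, and the orthogonal complementedness of $\overline{Ran(T)}$ for a Douglas-type factorization.

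For $(3)\Rightarrow(1)$, given $K=T^{*}Q$ with $Q\in Hom_{\mathcal{A}}^{*}(\mathcal{X},\mathcal{H}_{\mathcal{A}})$, I set $m_{\xi}:=Q\xi$. Since $\{\xi_{i}\}$ is Bessel, $T^{*}$ is defined on all of $\mathcal{H}_{\mathcal{A}}$, so $\sum_{i}c_{i}\xi_{i}$ converges for every $c\in\mathcal{H}_{\mathcal{A}}$ (giving atomic-system condition (i)), and $K\xi=T^{*}(Q\xi)=\sum_{i}(m_{\xi})_{i}\xi_{i}$. Proposition~\ref{Prop2.6} applied to $Q$ yields $\langle m_{\xi},m_{\xi}\rangle\leq \|Q\|_{\infty}^{2}\langle \xi,\xi\rangle$, so condition (ii) holds with constant $\|Q\|_{\infty}^{2}$.

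For $(1)\Rightarrow(2)$, the upper bound is the Bessel inequality. For the lower bound, adjointness gives
\[
\langle K^{*}\eta,\xi\rangle=\langle \eta,K\xi\rangle=\langle \eta,T^{*}m_{\xi}\rangle=\langle T\eta,m_{\xi}\rangle
\]
for all $\eta,\xi\in\mathcal{X}$. The seminorm Cauchy--Schwarz inequality together with the atomic norm bound yields $p(\langle K^{*}\eta,\xi\rangle)\leq \bar{p}(T\eta)\,\bar{p}(m_{\xi})\leq C^{1/2}\bar{p}(T\eta)\,\bar{p}(\xi)$. Specializing $\xi=K^{*}\eta$ and cancelling one factor of $\bar{p}(K^{*}\eta)$ produces $\bar{p}(K^{*}\eta)\leq C^{1/2}\bar{p}(T\eta)$; squaring and using $\bar{p}(T\eta)^{2}=p\bigl(\sum_{i}\langle \eta,\xi_{i}\rangle\langle \xi_{i},\eta\rangle\bigr)$ is exactly the lower bound in (2). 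For $(2)\Rightarrow(3)$, I define $S$ on $Ran(T)$ by $S(T\xi):=K^{*}\xi$. The lower-bound inequality $\bar{p}(K^{*}\xi)\leq C^{1/2}\bar{p}(T\xi)$ makes $S$ well defined and uniformly bounded, hence $S$ extends uniformly continuously to $\overline{Ran(T)}$. Since $\overline{Ran(T)}$ is orthogonally complemented, I extend $S$ by zero on $\overline{Ran(T)}^{\perp}$, obtaining a bounded $\mathcal{A}$-linear map $S:\mathcal{H}_{\mathcal{A}}\to\mathcal{X}$ with $ST=K^{*}$, and setting $Q:=S^{*}\in Hom_{\mathcal{A}}^{*}(\mathcal{X},\mathcal{H}_{\mathcal{A}})$ gives $T^{*}Q=K$.

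\emph{Main obstacle.} The delicate step is verifying adjointability of the extension $S$: in a Hilbert pro-$C^{*}$-module a bounded $\mathcal{A}$-linear map need not be adjointable, and the orthogonal-complement hypothesis is precisely the device that rescues the construction. Concretely, the functional-calculus method behind Theorem~\ref{th3.8} and Lemma~\ref{lem3.7} promotes the seminorm inequality to the $\mathcal{A}$-valued operator inequality $KK^{*}\leq C\,T^{*}T$ on $\mathcal{X}$, which is the Douglas-type hypothesis of Theorem~\ref{3.3} transferred between $\mathcal{H}_{\mathcal{A}}$ and $\mathcal{X}$; combined with the adjointable projection onto $\overline{Ran(T)}$, this produces the adjointable factor $Q$ and closes the loop.
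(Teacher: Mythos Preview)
Your cycle $(3)\Rightarrow(1)\Rightarrow(2)\Rightarrow(3)$ matches the paper's argument almost step for step: the paper also obtains $(3)\Rightarrow(1)$ by taking $c_{i}=\langle Q\xi,e_{i}\rangle$ and bounding $\sum c_{i}c_{i}^{*}\le\|Q\|_{\infty}^{2}\langle\xi,\xi\rangle$, proves $(1)\Rightarrow(2)$ via the dual pairing $\langle K^{*}\xi,y\rangle=\langle\xi,Ky\rangle$ together with Cauchy--Schwarz, and for $(2)\Rightarrow(3)$ simply rewrites the lower inequality as $C^{-1}\bar p(K^{*}\xi)^{2}\le\bar p(T\xi)^{2}$ and invokes the Douglas factorization Theorem~\ref{3.3}. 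Two small remarks: your appeal to Proposition~\ref{Prop2.6} for the bound $\langle Q\xi,Q\xi\rangle\le\|Q\|_{\infty}^{2}\langle\xi,\xi\rangle$ is a mis-citation, since that proposition is stated for invertible endomorphisms, whereas here you only need the elementary inequality $\langle Q^{*}Q\xi,\xi\rangle\le\|Q\|_{\infty}^{2}\langle\xi,\xi\rangle$ valid for any uniformly bounded adjointable map (the paper uses it without citation); and your explicit construction of $S$ with the complementedness hypothesis in $(2)\Rightarrow(3)$ is in fact more scrupulous than the paper, which invokes Theorem~\ref{3.3} directly even though that theorem is stated under a closed-range hypothesis rather than the complementedness of $\overline{Ran(T)}$ assumed here.
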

	\begin{proof}
		$(1) \Rightarrow (2)$
		For each $\xi \in \mathcal{X}$
		$$
		\left(\bar{p}_{\mathcal{X}}\left(K^{*} \xi\right)\right)^{2}=\left(\sup \left\{p(\langle\xi, K y\rangle): \bar{p}_{\mathcal{X}}(y) \leq 1\right\}\right)^{2}
		$$
		And 
		$K y=\sum_{i} m_{i} \xi_{i}$.
		Then 
		$$
		\begin{aligned}
			\left(\bar{p}_{\mathcal{X}}\left(K^{*} \xi\right)\right)^{2}&=\left(\sup \left\{p\left(\left\langle \xi, \sum_{i \in l} \xi_{i} m_{i}\right\rangle\right): \bar{p}_{\mathcal{X}}(y) \leq 1\right\}\right)^{2} \\
			&=\left(\sup \left\{p\left(\sum_{i \in I} m_{i}^{*}\left\langle \xi_{i}, \xi\right\rangle_{M(\mathcal{X})}\right): \bar{p}_{\mathcal{X}}(y) \leq 1\right\}\right)^{2} \\
			&\leq \sup _{\bar{p}_{\mathcal{X}}(y) \leq 1}\left\{p\left(\sum_{i \in I} m_{i}^{*} m_{i}\right) p\left(\sum_{i \in l}\left\langle \xi, \xi_{i}\right\rangle_{M(\mathcal{X})}\left\langle \xi_{i}, \xi\right\rangle_{M(\mathcal{X})}\right)\right\} \\
			&\leq C p\left(\sum_{i \in I}\left\langle \xi, \xi_{i}\right\rangle_{M(\mathcal{X})}\left\langle \xi_{i}, \xi\right\rangle_{M(\mathcal{X})}\right)
		\end{aligned}
		$$
		Then
		$$
		C^{-1}\left(\bar{p}_{\mathcal{X}}\left(K^{*} \xi\right)\right)^{2} \leq p\left(\sum_{i \in l}\left\langle \xi, \xi_{i}\right\rangle_{M(\mathcal{X})}\left\langle \xi_{i}, \xi\right\rangle_{M(\mathcal{X})}\right)
		$$
		Since $\left\{\xi_{i}\right\}_{i=1}^{\infty}$ is Bessel sequence, then there exists positive value $D$ such that
		$$	p \left(\sum_{i \in l}\left\langle \xi, \xi_{i}\right\rangle_{M(\mathcal{X})}\left\langle \xi_{i}, \xi\right\rangle_{M(\mathcal{X})}\right) 
		\leq D\left(\bar{p}_{\mathcal{X}}(\xi)\right)^{2}$$
		Then $$
		\begin{aligned}
			&C^{-1}\left(\bar{p}_{\mathcal{X}}\left(K^{*} \xi\right)\right)^{2} \leq p \left(\sum_{i \in l}\left\langle \xi, \xi_{i}\right\rangle_{M(\mathcal{X})}\left\langle \xi_{i}, \xi\right\rangle_{M(\mathcal{X})}\right) 
			&\leq D\left(\bar{p}_{\mathcal{X}}(\xi)\right)^{2} .
		\end{aligned}
		$$
		
		$(2) \Rightarrow(3)$ Since $\left\{\xi_{i}\right\}_{i \in I}$ is a Bessel sequence, we get $T^{*} e_{i}=\xi_{i}$, where $\left\{e_{i}\right\}_{i \in I}$ is the standard orthonormal basis for $\mathcal{H}_{\mathcal{A}} $. Therefore
		$$
		\begin{aligned}
			C^{-1}\left(\bar{p}_{\mathcal{X}}\left(K^{*} \xi\right)\right)^{2} \leq p \left(\sum_{i \in l}\left\langle \xi, \xi_{i}\right\rangle_{M(\mathcal{X})}\left\langle \xi_{i}, \xi\right\rangle_{M(\mathcal{X})}\right)&=p\left(\sum_{i \in I}\left\langle \xi, T^{*} e_{i}\right\rangle\left\langle T^{*} e_{i}, \xi\right\rangle\right) \\
			&=p\left(\sum_{i \in I}\left\langle T \xi, e_{i}\right\rangle\left\langle e_{i}, T \xi\right\rangle\right)=\bar{p}_{\mathcal{X}}(T \xi)^{2}, \quad \xi\in \mathcal{X}
		\end{aligned}
		$$
		Theorem \ref{3.3} yields that there exists $Q \in  Hom_{\mathcal{A}}^{\ast}\left(\mathcal{X}, \mathcal{H}_{\mathcal{A}}\right)$ such that $K=T^{*} Q$.

		$(3)\Rightarrow(1)$
		For every $\xi \in \mathcal{X}$, we have
		$$
		Q \xi=\sum_{i \in I}\left\langle 	Q \xi, e_{i}\right\rangle e_{i}
		$$
		Thus
		$$
		T^{*} 	Q \xi=\sum_{i \in I}\left\langle Q \xi, e_{i}\right\rangle T^{*} e_{i}, \quad \xi \in \mathcal{X}
		$$
		Let $c_{i}=\left\langle Q \xi, e_{i}\right\rangle$, so for all $\xi \in \mathcal{X}$ we get
		$$
		\sum_{i \in I} c_{i} c_{i}^{*}=\sum_{i \in I}\left\langle Q \xi, e_{i}\right\rangle\left\langle e_{i}, Q \xi\right\rangle=\langle Q \xi, Q \xi\rangle \leqslant\|Q\|_{\infty}^{2}\langle \xi, \xi\rangle
		$$
		Since $\left\{\xi_{i}\right\}_{i \in I}$ is a Bessel sequence for $\mathcal{X}$, we get that $\left\{\xi_{i}\right\}_{i \in I}$ is an atomic system for $K$.

	\end{proof}
	\begin{corollary}
		Let $\left\{\xi_{i}\right\}_{i \in I}$ be a frame of multiplier for $\mathcal{X}$ with bounds $C, D>0$ and $K \in Hom_{\mathcal{A}}^{\ast}(\mathcal{X})$. Then $\left\{\xi_{i}\right\}_{i \in I}$ is an atomic system for $K$ with bounds $\frac{1}{C^{-1}\|K\|^{2}}$ and $D$.
	\end{corollary}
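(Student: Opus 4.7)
The plan is to derive the Corollary as a direct consequence of Theorem \ref{thm3.5}, by verifying that the characterization in condition (2) of that theorem holds with the claimed bounds. The upper bound is essentially free from the frame hypothesis, and the lower bound is obtained by inserting the operator norm of $K$ to trade $\bar{p}_{\mathcal{X}}(\xi)$ for $\bar{p}_{\mathcal{X}}(K^{*}\xi)$.

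First, since $\{\xi_{i}\}_{i\in I}$ is a standard frame of multipliers with bounds $C, D$, we have for every $\xi \in \mathcal{X}$ the operator inequality
\begin{equation*}
C\langle \xi,\xi\rangle \;\leq\; \sum_{i\in I}\langle \xi,\xi_{i}\rangle_{M(\mathcal{X})}\langle \xi_{i},\xi\rangle_{M(\mathcal{X})} \;\leq\; D\langle \xi,\xi\rangle.
\end{equation*}
Applying any seminorm $p\in S(\mathcal{A})$ together with Lemma \ref{lem2.11}, which preserves order on $\mathcal{A}^{+}$, yields
\begin{equation*}
C\,\bigl(\bar{p}_{\mathcal{X}}(\xi)\bigr)^{2} \;\leq\; p\!\left(\sum_{i\in I}\langle \xi,\xi_{i}\rangle_{M(\mathcal{X})}\langle \xi_{i},\xi\rangle_{M(\mathcal{X})}\right) \;\leq\; D\,\bigl(\bar{p}_{\mathcal{X}}(\xi)\bigr)^{2}.
\end{equation*}
This immediately provides the upper bound $D$ required in Theorem \ref{thm3.5}(2).

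Next I would handle the lower bound. Since $K\in Hom^{\ast}_{\mathcal{A}}(\mathcal{X})$ is adjointable and uniformly bounded, so is $K^{*}$ with $\|K^{*}\|_{\infty}=\|K\|_{\infty}$, and therefore $\bar{p}_{\mathcal{X}}(K^{*}\xi)\leq \|K\|_{\infty}\,\bar{p}_{\mathcal{X}}(\xi)$ for every $\xi\in\mathcal{X}$ and every $p\in S(\mathcal{A})$. Rearranging gives $\bigl(\bar{p}_{\mathcal{X}}(\xi)\bigr)^{2}\geq \|K\|^{-2}\bigl(\bar{p}_{\mathcal{X}}(K^{*}\xi)\bigr)^{2}$, so combining with the frame lower bound produces
\begin{equation*}
\frac{1}{C^{-1}\|K\|^{2}}\,\bigl(\bar{p}_{\mathcal{X}}(K^{*}\xi)\bigr)^{2} \;=\; \frac{C}{\|K\|^{2}}\bigl(\bar{p}_{\mathcal{X}}(K^{*}\xi)\bigr)^{2} \;\leq\; C\,\bigl(\bar{p}_{\mathcal{X}}(\xi)\bigr)^{2} \;\leq\; p\!\left(\sum_{i\in I}\langle \xi,\xi_{i}\rangle_{M(\mathcal{X})}\langle \xi_{i},\xi\rangle_{M(\mathcal{X})}\right).
\end{equation*}

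With both inequalities established, the characterization (2) of Theorem \ref{thm3.5} applies (noting that a frame of multipliers is in particular a Bessel sequence, hence the hypothesis on $T$ and $\overline{Ran(T)}$ of that theorem are inherited from the standing setup), and we conclude that $\{\xi_{i}\}_{i\in I}$ is an atomic system for $K$ with the claimed bounds $\frac{1}{C^{-1}\|K\|^{2}}$ and $D$. There is no real obstacle here; the only point one must be careful about is the norm manipulation $\bar{p}_{\mathcal{X}}(K^{*}\xi)\leq \|K\|_{\infty}\bar{p}_{\mathcal{X}}(\xi)$ being valid for every seminorm $p$ (it is, by the definition of uniformly bounded operator), and confirming that the orthogonal-complementability hypothesis of Theorem \ref{thm3.5} is not an extra constraint beyond what a frame of multipliers already guarantees.
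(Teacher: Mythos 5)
Your proof is correct, but it takes a genuinely different and noticeably shorter route than the paper's. The paper verifies condition (2) of Theorem \ref{thm3.5} by invoking the canonical dual frame $\{S^{-1}\xi_i\}_{i\in I}$ (with bounds $D^{-1}, C^{-1}$), writing $\bar{p}_{\mathcal{X}}(K^{*}x)$ as a supremum of $p(\langle x, Ky\rangle)$ over $\bar{p}_{\mathcal{X}}(y)\leq 1$, expanding $x$ via the reconstruction formula $x=\sum_n\langle x, \xi_n\rangle S^{-1}\xi_n$, and applying a Cauchy--Schwarz estimate together with the upper bound $C^{-1}$ of the dual frame to arrive at $\bar{p}_{\mathcal{X}}(K^{*}x)^{2}\leq C^{-1}\|K\|^{2}\,p\bigl(\sum_{i}\langle x,\xi_i\rangle\langle\xi_i,x\rangle\bigr)$. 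You bypass the frame operator and the dual frame entirely: the lower frame inequality $C\bar{p}_{\mathcal{X}}(\xi)^{2}\leq p(\sum_i\langle\xi,\xi_i\rangle\langle\xi_i,\xi\rangle)$ combined with the elementary bound $\bar{p}_{\mathcal{X}}(K^{*}\xi)\leq\|K\|_{\infty}\bar{p}_{\mathcal{X}}(\xi)$ gives the same constant $C/\|K\|^{2}=1/(C^{-1}\|K\|^{2})$ in two lines. What your approach buys is economy and fewer hypotheses in play (no invertibility of $S$, no reconstruction formula); what the paper's approach buys is essentially nothing extra here, since both land on the identical bound. Two small points to keep honest: the identity $\|K^{*}\|_{\infty}=\|K\|_{\infty}$ deserves a one-line justification via the $C^{*}$-identity $\hat{p}(K^{*})^{2}=\hat{p}(KK^{*})\leq\hat{p}(K)\hat{p}(K^{*})$ for each seminorm, and you are right (and more careful than the paper, which omits this) to note that the complementability of $\overline{Ran(T)}$ required by Theorem \ref{thm3.5} is automatic for a frame, since the analysis operator is then bounded below and hence has closed, complemented range by Theorem \ref{2.10}.
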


	\begin{proof}
		Let $S$ be the frame operator of $\left\{\xi_{i}\right\}_{i \in I}$.We show that condition $(2)$ of the Theorem \ref{thm3.5}  is verified. Since $\left\{S^{-1} \xi_{i}\right\}_{i \in I}$ is a frame for $\mathcal{X}$ with bounds $ D^{-1}, C^{-1}>0$
		and  $x=\sum_{n \in J}\left\langle x, f_{n}\right\rangle S^{-1} f_{n}$ for all  $x \in \mathcal{H}$, we get
		
		$$
		\left(\bar{p}_{\mathcal{X}}\left(K^{*} \xi\right)\right)^{2}=\left(\sup \left\{p(\langle\xi, K y\rangle): \bar{p}_{\mathcal{X}}(y) \leq 1\right\}\right)^{2}
		$$
		And 
		$K y=\sum_{i} m_{i} \xi_{i}$.
		
		$$
		\begin{aligned}
			\bar{p}_{\mathcal{X}}(K^{*} x)^{2} &=\left(\sup \left\{p(\langle\xi, K y\rangle): \bar{p}_{\mathcal{X}}(y) \leq 1\right\}\right)^{2} \\
			&=(\sup \{p\left\langle(\sum_{n \in J}\langle x, f_{n}\rangle K^{*} S^{-1} f_{n}, y) \right\rangle: \bar{p}_{\mathcal{X}}(y) \leq 1\})^{2} \\
			&= (\sup \{p(\sum_{n \in J}\left\langle x, f_{n}\right\rangle\left\langle K^{*} S^{-1} f_{n}, y\right\rangle: \bar{p}_{\mathcal{X}}(y) \leq 1\})^{2} \\
			&\leq \sup _{\bar{p}_{\mathcal{X}}(y) \leq 1}\left\{p\left(\sum_{n \in J}\left\langle K y, S^{-1} f_{n}\right\rangle\left\langle S^{-1} f_{n}, K y\right\rangle\right) p\left(\sum_{i \in I}\left\langle \xi, \xi_{i}\right\rangle_{M(\mathcal{X})}\left\langle \xi_{i}, \xi\right\rangle_{M(\mathcal{X})}\right)\right\}\\ 
			& \leq \sup _{\bar{p}_{\mathcal{X}}(y) \leq 1} C^{-1} p\left(\sum_{i \in I}\left\langle \xi, \xi_{i}\right\rangle_{M(\mathcal{X})}\left\langle \xi_{i}, \xi\right\rangle_{M(\mathcal{X})}\right){\bar{p}_{\mathcal{X}}(K y)}^{2} \\
			&\leq C^{-1}{\bar{p}_{\mathcal{X}}(K)^{2}}p\left(\sum_{i \in I}\left\langle \xi, \xi_{i}\right\rangle_{M(\mathcal{X})}\left\langle \xi_{i}, \xi\right\rangle_{M(\mathcal{X})}\right)
		\end{aligned}
		$$
		Hence $\left\{\xi_{i}\right\}_{i \in I}$ is an atomic system for $K$.

	\end{proof}
	\begin{theorem}
		Let $K_{1}, K_{2} \in Hom_{\mathcal{A}}^{\ast}(\mathcal{X})$. If $\left\{\xi_{i}\right\}_{i \in I}$ is an atomic system for $K_{1}$ and $K_{2}$, and $\alpha, \beta$ are scalars, then $\left\{\xi_{i}\right\}_{i=1}^{\infty}$ is an atomic system for $\lambda K_{1}+\gamma K_{2}$ and $K_{1} K_{2}$.
	\end{theorem}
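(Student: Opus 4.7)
The plan is to verify the two defining conditions of an atomic system separately, exploiting that condition (i) in the definition depends only on the sequence $\{\xi_i\}$ (not on the operator), so it is inherited automatically in both cases. Condition (ii) is what actually needs work: one must produce an explicit coefficient sequence in $\mathcal{H}_{\mathcal{A}}$ for each target vector, with an inner-product bound that is uniform in $x$.

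For the linear combination $\alpha K_1 + \beta K_2$, I would start from the hypothesis: for every $x \in \mathcal{X}$ there exist $m^{(1)} = (m_i^{(1)})$ and $m^{(2)} = (m_i^{(2)})$ in $\mathcal{H}_{\mathcal{A}}$ with
\[
K_j x = \sum_i m_i^{(j)} \xi_i, \qquad \langle m^{(j)}, m^{(j)}\rangle_{\mathcal{H}_{\mathcal{A}}} \leq C_j \langle x, x\rangle_{\mathcal{X}} \qquad (j=1,2).
\]
Setting $n_i := \alpha m_i^{(1)} + \beta m_i^{(2)}$ immediately gives $(\alpha K_1 + \beta K_2)x = \sum_i n_i \xi_i$. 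To bound $\langle n,n\rangle$ I would use the elementary identity $(a+b)^*(a+b) + (a-b)^*(a-b) = 2(a^*a + b^*b)$, valid in any pro-$C^{\ast}$-algebra, which yields termwise
\[
n_i^* n_i \leq 2|\alpha|^2 (m_i^{(1)})^* m_i^{(1)} + 2|\beta|^2 (m_i^{(2)})^* m_i^{(2)}.
\]
Summing in $\mathcal{A}$ (the convergence being guaranteed since both summands do) produces $\langle n,n\rangle_{\mathcal{H}_{\mathcal{A}}} \leq 2(|\alpha|^2 C_1 + |\beta|^2 C_2)\langle x,x\rangle_{\mathcal{X}}$, so the constant $C := 2(|\alpha|^2 C_1 + |\beta|^2 C_2)$ works.

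For the product $K_1 K_2$, the idea is to feed $K_2 x$ into the atomic decomposition for $K_1$. Since $\{\xi_i\}$ is an atomic system for $K_1$, there exists $m = (m_i) \in \mathcal{H}_{\mathcal{A}}$ with $K_1(K_2 x) = \sum_i m_i \xi_i$ and $\langle m, m\rangle_{\mathcal{H}_{\mathcal{A}}} \leq C_1 \langle K_2 x, K_2 x\rangle_{\mathcal{X}}$. To finish I need to absorb the right-hand side into a multiple of $\langle x, x\rangle_{\mathcal{X}}$. Using that $K_2 \in Hom_{\mathcal{A}}^{\ast}(\mathcal{X})$ is uniformly bounded, the standard adjointable-operator inequality $K_2^* K_2 \leq \|K_2\|_\infty^2 \, \mathrm{Id}_{\mathcal{X}}$ (a consequence of Proposition \ref{prop2.11} applied to $\|K_2\|_\infty^2 \mathrm{Id} - K_2^* K_2$) gives $\langle K_2 x, K_2 x\rangle = \langle K_2^* K_2 x, x\rangle \leq \|K_2\|_\infty^2 \langle x, x\rangle$, so the constant $C := C_1 \|K_2\|_\infty^2$ works.

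I do not expect any real obstacle; the only subtle point is justifying the inequality $\langle K_2 x, K_2 x\rangle \leq \|K_2\|_\infty^2 \langle x, x\rangle$ in the pro-$C^{\ast}$-setting, but this is a routine consequence of the characterization of positivity in Proposition \ref{prop2.11} together with the uniform boundedness assumption implicit in $K_2 \in Hom_{\mathcal{A}}^{\ast}(\mathcal{X})$ as used elsewhere in the paper.
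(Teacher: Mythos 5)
Your proof is correct, and it takes a genuinely different --- and in two respects stronger --- route than the paper's. The paper does not argue from the definition: it first passes to the inequality characterization of atomic systems (condition (2) of Theorem \ref{thm3.5}, a lower bound of the form $\lambda_n\,\bar p_{\mathcal X}(K_n^*x)^2 \le p\bigl(\sum_i\langle x,\xi_i\rangle\langle\xi_i,x\rangle\bigr)$) and then tries to show $(\alpha K_1+\beta K_2)^*$ satisfies the same kind of bound; in doing so it expands $\langle(\alpha K_1+\beta K_2)^*x,(\alpha K_1+\beta K_2)^*x\rangle$ as if the cross terms vanished, which is unjustified. Your termwise estimate $n_i^*n_i\le 2|\alpha|^2(m_i^{(1)})^*m_i^{(1)}+2|\beta|^2(m_i^{(2)})^*m_i^{(2)}$, coming from $(a-b)^*(a-b)\ge 0$, handles exactly the cross terms the paper drops, and your explicit coefficient sequences verify the definition directly without invoking Theorem \ref{thm3.5} (and hence without its extra hypothesis that $\overline{Ran(T)}$ be orthogonally complemented). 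Moreover, the paper's proof stops after the linear combination and never addresses $K_1K_2$; your composition argument (feed $K_2x$ into the atomic decomposition for $K_1$ and absorb $\langle K_2x,K_2x\rangle\le\|K_2\|_\infty^2\langle x,x\rangle$) settles that case. Two small points to make explicit when writing this up: first, the membership $n=(\alpha m_i^{(1)}+\beta m_i^{(2)})_i\in\mathcal H_{\mathcal A}$ should not be justified by termwise domination alone (a bounded increasing sequence of positive elements need not converge in a pro-$C^*$-algebra) but by the standard fact that $\mathcal H_{\mathcal A}$ is a module, the cross sums converging via the Cauchy--Schwarz inequality for the $\mathcal A$-valued inner product; second, the operator inequality $K_2^*K_2\le\|K_2\|_\infty^2\,\mathrm{Id}$ genuinely requires uniform boundedness of $K_2$, which the theorem statement does not assert, though the paper assumes it tacitly in the same way elsewhere (e.g.\ in its use of $\|K^*\|_\infty$ in the existence theorem for atomic systems), so your proof is no more demanding than the paper's conventions.
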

	\begin{proof}
		$\{\xi\}_{i=1}^{\infty}$ is an atomic system for 
		$K_{1}$ and $K_{2}$, then there are two positives constantes $0< \lambda_{n} \leq \gamma_{n} < \infty$ $(n=1,2)$ such that 
		\begin{equation}\label{ine3.1}
			\lambda_{n}\left(\bar{p}_{\mathcal{X}}\left(K_{n}^{*} x\right)\right)^{2} \leq p \left(\sum_{i \in l}\left\langle x, \xi_{i}\right\rangle_{M(\mathcal{X})}\left\langle \xi_{i}, x\right\rangle_{M(\mathcal{X})}\right) 
			\leq \gamma_{n} \left(\bar{p}_{\mathcal{X}}(x)\right)^{2}, \text{for all} \; x \in \mathcal{X} . 
		\end{equation}
		We have
		\begin{align*}
			\left(\bar{p}_{\mathcal{X}}\left(\alpha K_{1}+K_{2} \right)^{*}x \right)^{2}&= p(\langle\left(\alpha K_{1}+K_{2} \right)^{*}x,\left(\alpha K_{1}+K_{2} \right)^{*}x\rangle) \\
			&=p(\langle\left(\alpha K_{1} \right)^{*}x,\left(\alpha K_{1} \right)^{*}x\rangle+\langle\left(\beta K_{2} \right)^{*}x,\left(\beta K_{2} \right)^{*}x\rangle)\\
			&\leq p(\alpha^{2}) C p\left(\sum_{i \in I}\left\langle x, \xi_{i}\right\rangle_{M(\mathcal{X})}\left\langle \xi_{i}, x\right\rangle_{M(\mathcal{X})}\right)\\&+ p(\beta^{2}) D p\left(\sum_{i \in I}\left\langle x, \xi_{i}\right\rangle_{M(\mathcal{X})}\left\langle \xi_{i}, x\right\rangle_{M(\mathcal{X})}\right)\\
			&\leq \max(p(\alpha^{2}) C,p(\beta^{2}) D ) p\left(\sum_{i \in I}\left\langle x, \xi_{i}\right\rangle_{M(\mathcal{X})}\left\langle \xi_{i}, x\right\rangle_{M(\mathcal{X})}\right)
		\end{align*}
		by setting $M=\max(p(\alpha^{2}) C,p(\beta^{2}) D )$
		It follows that
		$$M^{-1}\left(\bar{p}_{\mathcal{X}}\left(\alpha K_{1}+\beta K_{2} \right)^{*}x \right)^{2}\leq p\left(\sum_{i \in I}\left\langle x, \xi_{i}\right\rangle_{M(\mathcal{X})}\left\langle \xi_{i}, x\right\rangle_{M(\mathcal{X})}\right)$$
		Hence $\left\{\xi_{i}\right\}_{i=1}^{\infty}$ satisfies the lower frame condition. And from inequalities \ref{ine3.1} , we get
		$$p \left(\sum_{i \in l}\left\langle x, \xi_{i}\right\rangle_{M(\mathcal{X})}\left\langle \xi_{i}, x\right\rangle_{M(\mathcal{X})}\right) 
		\leq\frac{ \gamma_{1}+\gamma_{2}}{2} \left(\bar{p}_{\mathcal{X}}(x)\right)^{2}, \text{for all} \; x \in \mathcal{X}$$
		Hence $\{ \xi_{i} \}_{i=0}^{\infty}$ is an atomic system for $\alpha K_{1}+\beta K_{2}$.

	\end{proof}
	\section{$K$-frames in Hilbert pro-$C^{\ast}$-module}\label{sec4}
	Now we define $K$-frame in Hilbert pro-$C^{\ast}$-modules and we show that under some conditions every ordinary frame of multiplier is a $K$-frame. Then, we use Douglas' factorization theorem to demonstrat some properties of $K$-frames. We finally show the relationship between Bessel sequences and $K$-frame in pro-$C^{\ast}$-modules.
	\begin{definition}
		Let $K \in Hom_{\mathcal{A}}^{\ast}(\mathcal{X})$. $\{ \xi_{i} \}_{i=0}^{\infty}$ is a sequence in $\mathcal{X}$, $\{ \xi_{i} \}_{i=0}^{\infty}$ is called $K$-frame for $\mathcal{X}$ if there existe two positive constants $A$ and $B$ such that 
		$$A \langle K^{\ast} \xi, K^{\ast}\xi \rangle_{\mathcal{X}} \leq \sum_{i \in I}\left\langle \xi, \xi_{i}\right\rangle_{M(\mathcal{X})}\left\langle \xi_{i}, \xi \right\rangle_{M(\mathcal{X})} \leq B \langle  \xi, \xi \rangle_{\mathcal{X}} \; \; \forall \xi \in \mathcal{X} $$
		$A$ and $B$ are respectively called lower and upper bounds for $K$-frame $\{ \xi_{i} \}_{i \in I}$
	\end{definition}
	\begin{definition}
		Let $K \in Hom_{\mathcal{A}}^{\ast}(\mathcal{X})$
		be a bounded operator. A sequence $\left\{\xi_{i}\right\}_{i \in I}$ in $\mathcal{X}$ is said to be a tight $K$-frame with bound $A$ if
		$$
		A\langle K^{\ast} \xi, K^{\ast}\xi \rangle_{\mathcal{X}}=\sum_{i \in I}\left\langle \xi, \xi_{i}\right\rangle_{M(\mathcal{X})}\left\langle \xi_{i}, \xi \right\rangle_{M(\mathcal{X})}  \text { for all } \xi \in \mathcal{X}
		$$
		When $A=1$, it is called a Parseval $K$-frame $.$
	\end{definition}
	\begin{theorem}
		Let $K$ be an invertible element in $Hom_{\mathcal{A}}^{\ast}(\mathcal{X})$ such that both are uniformly bounded with $\|K\|_{\infty}^{2} $. Then every ordinary frame of multiplier is $K$-frame for $\mathcal{X}$. 
	\end{theorem}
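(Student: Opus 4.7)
The plan is straightforward: take an arbitrary standard frame of multipliers $\{\xi_i\}_{i\in I}$ for $\mathcal{X}$ with bounds $C,D>0$, and verify the two $K$-frame inequalities by keeping the upper bound unchanged and deriving the lower bound from Proposition \ref{Prop2.6}. The upper inequality is immediate: by the defining inequalities of a standard frame of multipliers, for every $\xi\in\mathcal{X}$,
\begin{equation*}
\sum_{i\in I}\langle\xi,\xi_i\rangle_{M(\mathcal{X})}\langle\xi_i,\xi\rangle_{M(\mathcal{X})}\leq D\,\langle\xi,\xi\rangle_{\mathcal{X}},
\end{equation*}
so one may take $B=D$ as the upper $K$-frame bound.

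For the lower bound the key observation is that $K^{*}$ inherits from $K$ the hypothesis of Proposition \ref{Prop2.6}: since $K\in Hom_{\mathcal{A}}^{*}(\mathcal{X})$ is invertible with $K$ and $K^{-1}$ uniformly bounded, $K^{*}$ is also invertible in $Hom_{\mathcal{A}}^{*}(\mathcal{X})$ with $(K^{*})^{-1}=(K^{-1})^{*}$ and $\|K^{*}\|_{\infty}=\|K\|_{\infty}$. Therefore Proposition \ref{Prop2.6} applied to $K^{*}$ yields, for every $\xi\in\mathcal{X}$,
\begin{equation*}
\langle K^{*}\xi,K^{*}\xi\rangle\leq \|K\|_{\infty}^{2}\,\langle\xi,\xi\rangle.
\end{equation*}
Combining this with the lower bound of the standard frame of multipliers inequality gives
\begin{equation*}
\frac{C}{\|K\|_{\infty}^{2}}\,\langle K^{*}\xi,K^{*}\xi\rangle\leq C\,\langle\xi,\xi\rangle\leq \sum_{i\in I}\langle\xi,\xi_i\rangle_{M(\mathcal{X})}\langle\xi_i,\xi\rangle_{M(\mathcal{X})},
\end{equation*}
so one may take $A=C/\|K\|_{\infty}^{2}$ as the lower $K$-frame bound.

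There is no genuine obstacle here: the whole statement reduces to transferring the uniform bound of $K$ onto $K^{*}$ and plugging the resulting inequality into the frame bounds. The only mild point to be careful about is checking that Proposition \ref{Prop2.6} really does apply to $K^{*}$ rather than only to $K$, and then observing that $\|K^{*}\|_{\infty}=\|K\|_{\infty}$ so that the constant appearing in the lower bound can be written in terms of $\|K\|_{\infty}$ as stated.
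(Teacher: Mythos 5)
Your proof is correct and takes essentially the same route as the paper: the upper $K$-frame bound is the frame upper bound unchanged, and the lower bound follows from $\langle K^{*}\xi,K^{*}\xi\rangle\leq\|K\|_{\infty}^{2}\langle\xi,\xi\rangle$ combined with the lower frame inequality, yielding $A=C/\|K\|_{\infty}^{2}$. Your explicit appeal to Proposition \ref{Prop2.6} applied to $K^{*}$ (noting $\|K^{*}\|_{\infty}=\|K\|_{\infty}$) is just a more careful justification of the estimate the paper asserts directly.
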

	\begin{proof}
		Suppose that $\{ \xi_{i} \}_{i=0}^{\infty}$ is frame for $\mathcal{X}$ then there two constants $A$ and $B$ such that 
		\begin{equation}\label{7.1}
			A \langle \xi, \xi \rangle_{\mathcal{X}} \leq \sum_{i \in I}\left\langle \xi, \xi_{i}\right\rangle_{M(\mathcal{X})}\left\langle \xi_{i}, \xi\right\rangle_{M(\mathcal{X})} \leq B \langle  \xi, \xi \rangle_{\mathcal{X}} \; \; \forall \xi \in \mathcal{X} 	
		\end{equation}
		For $K $ an invertible element in $Hom_{\mathcal{A}}^{\ast}(\mathcal{X})$, we have $\langle K^{\ast} \xi, K^{\ast}\xi \rangle_{\mathcal{X}} \leq \| K\|_{\infty}^{2} \langle \xi, \xi \rangle_{\mathcal{X}}$. Then $\frac{1}{\| K\|_{\infty}^{2}} \langle K^{\ast} \xi, K^{\ast}\xi \rangle_{\mathcal{X}} \leq \langle  \xi,\xi \rangle_{\mathcal{X}}$, for all $\xi \in \mathcal{X}$. From the inequality \ref{7.1}, we have 
		$$A \frac{1}{\| K\|_{\infty}^{2}}\langle K^{\ast} \xi, K^{\ast}\xi \rangle_{\mathcal{X}} \leq A \langle  \xi,\xi \rangle_{\mathcal{X}}\leq \sum_{i \in I}\left\langle \xi, \xi_{i}\right\rangle_{M(\mathcal{X})}\left\langle \xi_{i}, \xi\right\rangle_{M(\mathcal{X})} \leq B \langle  \xi, \xi \rangle_{\mathcal{X}}$$
		Hence $\{ \xi_{i} \}_{i=0}^{\infty}$ is $K$-frame for $\mathcal{X}$.

	\end{proof}
	\begin{proposition}
		Let $\left\{\xi_{i}\right\}_{i=1}^{\infty}$ be a $K$-frame for $\mathcal{X} .$ Let $L \in \mathcal{X}$ be a bounded uniformly operator with $Ran(L) \subseteq$ $Ran(K)$. Then $\left\{f_{i}\right\}_{i=1}^{\infty}$ is a $L$-frame for $\mathcal{X}$.
	\end{proposition}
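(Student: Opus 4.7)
The plan is to unpack the $K$-frame hypothesis and then use Douglas' factorization theorem (Theorem \ref{3.3}) to convert control by $K^*$ into control by $L^*$. By hypothesis, there are positive constants $A,B$ with
$$A\langle K^{*}\xi, K^{*}\xi\rangle_{\mathcal{X}} \leq \sum_{i\in I}\langle \xi,\xi_i\rangle_{M(\mathcal{X})}\langle \xi_i,\xi\rangle_{M(\mathcal{X})} \leq B\langle \xi,\xi\rangle_{\mathcal{X}}, \qquad \forall \xi\in\mathcal{X}.$$
The upper bound depends only on the Bessel property of $\{\xi_i\}_{i}$ and transfers verbatim to the candidate $L$-frame, so the entire task is to produce a lower bound of the form $A'\langle L^{*}\xi,L^{*}\xi\rangle_{\mathcal{X}} \leq \sum_i \langle\xi,\xi_i\rangle\langle\xi_i,\xi\rangle$.

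To obtain this, I would invoke Theorem \ref{3.3} with the roles $T:=L$ and $L:=K$ (reading the theorem with the local letters swapped). The hypothesis $\operatorname{Ran}(L)\subseteq \operatorname{Ran}(K)$ is exactly condition $(1)$ of that theorem, so its equivalence with $(2)$ yields a constant $\alpha\geq 0$ such that
$$LL^{*} \leq \alpha^{2} KK^{*}.$$
Applying Proposition \ref{prop2.11} (characterization of positivity of module maps via the $\mathcal{A}$-valued inner product), this operator inequality pairs against $\xi$ to give
$$\langle L^{*}\xi, L^{*}\xi\rangle_{\mathcal{X}} = \langle LL^{*}\xi,\xi\rangle_{\mathcal{X}} \leq \alpha^{2}\langle KK^{*}\xi,\xi\rangle_{\mathcal{X}} = \alpha^{2}\langle K^{*}\xi, K^{*}\xi\rangle_{\mathcal{X}}.$$
Dividing by $\alpha^{2}$ (the trivial case $\alpha=0$ forces $L=0$ and the result is immediate) and combining with the $K$-frame lower bound gives
$$\frac{A}{\alpha^{2}}\langle L^{*}\xi, L^{*}\xi\rangle_{\mathcal{X}} \leq A\langle K^{*}\xi, K^{*}\xi\rangle_{\mathcal{X}} \leq \sum_{i\in I}\langle \xi,\xi_i\rangle_{M(\mathcal{X})}\langle \xi_i,\xi\rangle_{M(\mathcal{X})},$$
so $\{\xi_i\}_{i}$ satisfies the $L$-frame inequalities with bounds $A/\alpha^{2}$ and $B$.

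The only real obstacle is that Theorem \ref{3.3} as stated requires $\operatorname{Ran}(K)$ to be closed, and this is not written explicitly in the statement of the proposition; I would therefore note that this is the standing assumption inherited from the context of $K$-frames (it is the natural setting in which $K^{*}$ is bounded below modulo its kernel, and it is what makes Douglas' factorization available). Apart from that, no heavy computation is needed: the lower bound is a direct consequence of the operator inequality coming from Douglas, and the upper bound is already the Bessel bound of $\{\xi_i\}_{i}$ and does not involve $K$ or $L$ at all.
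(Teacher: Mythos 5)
Your proposal is correct and follows essentially the same route as the paper: both derive $LL^{*}\leq\alpha^{2}KK^{*}$ from the range inclusion via Theorem \ref{3.3} and then splice this into the lower $K$-frame inequality to obtain the bound $A/\alpha^{2}$, keeping $B$ as the Bessel bound. Your added remarks about the closed-range hypothesis needed to invoke Douglas' theorem and the degenerate case $\alpha=0$ are points the paper silently passes over, but they do not change the argument.
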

	\begin{proof}
		Let $\left\{\xi_{i}\right\}_{i=1}^{\infty}$ be  $K$-frame for $\mathcal{X}$. Then there are positive constants $A$ and $B$ such that
		\begin{equation}\label{5.2}
			A \langle K^{\ast} \xi, K^{\ast}\xi \rangle_{\mathcal{X}} \leq \sum_{i \in I}\left\langle \xi, \xi_{i}\right\rangle_{M(\mathcal{X})}\left\langle \xi_{i}, \xi\right\rangle_{M(\mathcal{X})} \leq B \langle  \xi, \xi \rangle_{\mathcal{X}} \; \; \forall \xi \in \mathcal{X}
		\end{equation} 
		Since $Ran(L) \subseteq Ran(K)$, by Douglas' theorem \ref{3.3}, there exists $\alpha>0$ such that $L L^{*} \leq \alpha^{2} K K^{*}$.
		From the inequality (\ref{5.2}), we have
		\begin{align*}
			\frac{A}{\alpha^{2}} \langle L^{\ast} \xi, L^{\ast}\xi \rangle_{\mathcal{X}}  \leq A \langle K^{\ast} \xi, K^{\ast}\xi \rangle_{\mathcal{X}} &\leq \sum_{i \in I}\left\langle x, \xi_{i}\right\rangle_{M(\mathcal{X})}\left\langle \xi_{i}, x\right\rangle_{M(\mathcal{X})}\\& \leq B \langle  \xi, \xi \rangle_{\mathcal{X}} \; \; \forall \xi \in \mathcal{X} .
		\end{align*}
		Therefore $\left\{\xi_{i}\right\}_{i=1}^{\infty}$ is a $L$-frame for $\mathcal{X}$.

	\end{proof}
	\begin{theorem}\label{thm4.5}
		Let $K \in Hom_{\mathcal{A}}^{\ast}(\mathcal{X})$ and  $\left\{\xi_{i}\right\}_{i \in I}$ be a Bessel sequence for $\mathcal{X}$ . Suppose that $T \in Hom_{\mathcal{A}}^{\ast}\left(\mathcal{X}, \mathcal{H}_{\mathcal{A}}\right)$ is given by $T(\xi)=\left\{\left\langle \xi, \xi_{i}\right\rangle\right\}_{i \in I}$ and $\overline{Ran(T)}$ is orthogonally complemented. Then $\left\{\xi_{i}\right\}_{i \in I}$ is a $K$-frame for $\mathcal{X}$ if and only if there exists a linear bounded operator $L: \mathcal{H}_{\mathcal{A}} \rightarrow \mathcal{X}$ such that $L e_{i}=\xi_{i}$ and $Ran(K) \subseteq Ran(L)$, where $\left\{e_{i}\right\}_{i}$ is the orthonormal basis for $\mathcal{H}_{\mathcal{A}}
		$.
	\end{theorem}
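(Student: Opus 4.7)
The plan is to deduce both implications from Douglas' factorization theorem (Theorem \ref{3.3}), working through the synthesis operator associated to $\{\xi_i\}_{i \in I}$. First I would introduce $L_{0} : \mathcal{H}_{\mathcal{A}} \to \mathcal{X}$ by $L_{0}(\{c_i\}_{i \in I}) = \sum_{i \in I} c_i \xi_i$; the Bessel hypothesis guarantees convergence in $\mathcal{X}$, and a direct computation identifies $L_{0}$ with the adjoint $T^{*}$ of the analysis operator. In particular $L_{0} e_i = \xi_i$, and the fundamental identity
\begin{equation*}
\sum_{i \in I} \langle \xi, \xi_i \rangle \langle \xi_i, \xi \rangle = \langle T\xi, T\xi \rangle = \langle L_{0} L_{0}^{*} \xi, \xi \rangle, \qquad \xi \in \mathcal{X},
\end{equation*}
is the bridge between the $K$-frame inequality and operator-theoretic statements about $L_{0}$.

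For the forward direction, assume $\{\xi_i\}$ is a $K$-frame with bounds $A, B$. The lower bound, rewritten through the identity above, becomes $A\langle K K^{*}\xi, \xi \rangle \leq \langle L_{0} L_{0}^{*}\xi, \xi \rangle$ for every $\xi \in \mathcal{X}$. By Proposition \ref{prop2.11} this upgrades to the operator inequality $A K K^{*} \leq L_{0} L_{0}^{*}$, and Theorem \ref{3.3} then delivers $Ran(K) \subseteq Ran(L_{0})$. Setting $L := L_{0}$ completes this direction.

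For the converse, suppose $L : \mathcal{H}_{\mathcal{A}} \to \mathcal{X}$ satisfies $L e_i = \xi_i$ and $Ran(K) \subseteq Ran(L)$. Since $L$ and $L_{0}$ are bounded $\mathcal{A}$-module maps agreeing on the standard basis $\{e_i\}$ of $\mathcal{H}_{\mathcal{A}}$, one has $L = L_{0}$. Applying Theorem \ref{3.3} to the range inclusion $Ran(K) \subseteq Ran(L)$ produces $\alpha > 0$ with $K K^{*} \leq \alpha^{2} L L^{*}$, and therefore
\begin{equation*}
\tfrac{1}{\alpha^{2}}\langle K^{*}\xi, K^{*}\xi \rangle \leq \langle L L^{*}\xi, \xi \rangle = \sum_{i \in I} \langle \xi, \xi_i \rangle \langle \xi_i, \xi \rangle,
\end{equation*}
which gives the lower $K$-frame inequality; the upper bound is the Bessel hypothesis.

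The main obstacle is that Theorem \ref{3.3} requires the relevant operator to have closed range, whereas a priori $Ran(L_{0}) = Ran(T^{*})$ need not be closed. The assumption that $\overline{Ran(T)}$ is orthogonally complemented is precisely what is needed here: via adjoint duality it ensures existence of the generalized inverse invoked in the proof of Theorem \ref{3.3}, so that the Douglas factorization is genuinely available and the operator inequalities between $K K^{*}$ and $L_{0} L_{0}^{*}$ transfer cleanly in both directions. Verifying that the resulting factorization respects the module structure at every step is the delicate technical core of the argument.
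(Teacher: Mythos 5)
Your proposal is correct and follows essentially the same route as the paper: identify $L$ with the synthesis operator $T^{*}$, translate the lower $K$-frame bound into the operator inequality $AKK^{*}\leq LL^{*}$, and use Theorem \ref{3.3} to pass between that inequality and the range inclusion $Ran(K)\subseteq Ran(L)$ in both directions. The only (harmless) divergence is that you obtain the operator inequality directly from Proposition \ref{prop2.11} applied to the inner-product form of the frame condition, whereas the paper detours through the seminorm inequality and Theorem \ref{th3.8}; you also explicitly flag the closed-range hypothesis needed to invoke Douglas' theorem, a point the paper passes over in silence.
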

	\begin{proof}
		Suppose that $\left\{\xi_{i}\right\}_{i \in I}$ is $K$-frame. Then $ A \bar{p}_{\mathcal{X}}( K^{*} \xi )^{2} \leqslant\bar{p}_{\mathcal{X}}(T\xi)^{2}$ for all $\xi \in \mathcal{X}$. By Theorem \ref{th3.8}, there is $\alpha>0$ such that
		$$K K^{*} \leqslant \alpha T^{*} T .$$
		Setting $T^{*}=L$, we get $K K^{*} \leqslant \alpha L L^{*}$ and therefore $Ran(K) \subseteq Ran(L)$.
		Conversely, since $Ran(K) \subseteq Ran(L)$, by Theorem \ref{3.3} there exists $\alpha>0$ such that $K K^{*} \leqslant \alpha L L^{*}$. Therefore
		$$
		\frac{1}{\alpha}\left\langle K^{*} \xi, K^{*} \xi\right\rangle \leqslant\left\langle L^{*}\xi, L^{*} \xi\right\rangle=\sum_{i \in I}\left\langle \xi, \xi_{i}\right\rangle\left\langle \xi_{i}, \xi\right\rangle, \quad \xi \in \mathcal{X}
		$$
		Then $\left\{\xi_{i}\right\}_{i \in I}$ is $K$-frame for $\mathcal{X}$.

	\end{proof}
	\begin{proposition}\label{prop5.5}
		Let $\left\{\xi_{i}\right\}_{i \in I}$ be a  Bessel sequence of $\mathcal{X}$
		,  $\left\{\xi_{i}\right\}_{i \in I}$ is a $K$-frame with bounds $A, B>0$ if and only if $L \geqslant A K K^{*}$, where $L$ is the frame operator for $\left\{\xi_{i}\right\}_{i \in I}.$
	\end{proposition}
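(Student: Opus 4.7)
The plan is to show the two inequalities in the definition of a $K$-frame correspond exactly to the Bessel condition (upper bound) and the operator inequality $AKK^{*}\leq L$ (lower bound). Since the Bessel hypothesis already delivers the upper bound $\sum_{i}\langle\xi,\xi_{i}\rangle\langle\xi_{i},\xi\rangle \leq B\langle\xi,\xi\rangle$, the content of the proposition is entirely about how the lower $K$-frame bound is encoded as an operator inequality.

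First I would identify the frame operator with the quadratic form appearing in the $K$-frame inequality. Writing the analysis map $T\xi=\{\langle\xi,\xi_{i}\rangle\}_{i}$, we have $L=T^{*}T$, so for every $\xi\in\mathcal{X}$
\begin{equation*}
\langle L\xi,\xi\rangle = \langle T\xi,T\xi\rangle = \sum_{i\in I}\langle\xi,\xi_{i}\rangle_{M(\mathcal{X})}\langle\xi_{i},\xi\rangle_{M(\mathcal{X})}.
\end{equation*}
At the same time $A\langle K^{*}\xi,K^{*}\xi\rangle = A\langle KK^{*}\xi,\xi\rangle$ by adjointness, which is exactly the quadratic form of the positive operator $AKK^{*}$.

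Next I would turn the inner-product inequality into the operator inequality. The lower $K$-frame bound says $\langle AKK^{*}\xi,\xi\rangle\leq\langle L\xi,\xi\rangle$ for all $\xi\in\mathcal{X}$, i.e., $\langle(L-AKK^{*})\xi,\xi\rangle\geq 0$. Since $L-AKK^{*}\in Hom_{\mathcal{A}}^{*}(\mathcal{X})$ is self-adjoint, Proposition \ref{prop2.11} gives the equivalence $L-AKK^{*}\geq 0$ in $Hom_{\mathcal{A}}^{*}(\mathcal{X})$, i.e., $L\geq AKK^{*}$. This equivalence runs in both directions, which settles both implications of the proposition.

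Finally I would write up the chain of equivalences in a single short argument, invoking Proposition \ref{prop2.11} at the pivotal step. There is no real obstacle here: the only substantive input beyond bookkeeping is the characterization of positivity via pointwise inner products (Proposition \ref{prop2.11}), and the observation that the frame operator computes precisely the middle term of the frame inequality through $L=T^{*}T$. The Bessel hypothesis is used only to ensure that $T$, hence $L$, is well defined and that the upper bound $B$ is available automatically.
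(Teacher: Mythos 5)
Your proposal is correct and follows essentially the same route as the paper: both reduce the lower $K$-frame bound to the quadratic-form inequality $\langle AKK^{*}\xi,\xi\rangle\leq\langle L\xi,\xi\rangle$ via $L=T^{*}T$ and then pass to the operator inequality $L\geq AKK^{*}$ using the positivity characterization of Proposition \ref{prop2.11}. Your write-up is in fact slightly more careful than the paper's, which leaves the appeal to Proposition \ref{prop2.11} implicit.
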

	\begin{proof}
		A sequence  $\left\{\xi_{i}\right\}_{i \in I}$ is a $K$-frame for $\mathcal{X}$ if and only if
		$$
		\left\langle A K K^{*} \xi, \xi\right\rangle=A\left\langle K^{*} \xi, K^{*}\xi\right\rangle \leqslant \sum_{i \in I}\left\langle \xi, \xi_{i}\right\rangle_{M(\mathcal{X})}\left\langle \xi_{i}, \xi \right\rangle_{M(\mathcal{X})}=\langle L \xi, \xi\rangle \leqslant B\langle \xi, \xi\rangle,
		$$
		Then   $\left\{\xi_{i}\right\}_{i \in I}$ is a $K$-frame  if and only if $L \geqslant A K K^{*}$.

	\end{proof}
	\begin{theorem}
		Let $\left\{\xi_{i}\right\}_{i\in I}$ be a Bessel sequence in $\mathcal{X}$. Then $\left\{\xi_{i}\right\}_{i\in I}$ is a $K$-frame for $\mathcal{X}$ if and only if $K=L^{1 / 2} L$, for some $U \in Hom_{\mathcal{A}}^{\ast}(\mathcal{X})$.
	\end{theorem}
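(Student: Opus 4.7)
The plan is to read the claimed identity $K = L^{1/2}L$ as the (evidently intended) factorization $K = L^{1/2}U$ for some $U \in Hom_{\mathcal{A}}^{\ast}(\mathcal{X})$, since this is what the quantifier on $U$ demands and what makes the equivalence compatible with the preceding material. The proof will combine Proposition \ref{prop5.5}, which converts the $K$-frame condition into the operator inequality $L \geq A\,KK^{\ast}$, with Douglas' factorization theorem (Theorem \ref{3.3}) applied to the positive square root $L^{1/2}$.

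For the forward direction, assume $\{\xi_{i}\}_{i\in I}$ is a $K$-frame with lower bound $A$. Proposition \ref{prop5.5} yields $AKK^{\ast} \leq L$, i.e.\ $KK^{\ast} \leq \tfrac{1}{A}\,L = \tfrac{1}{A}\,L^{1/2}(L^{1/2})^{\ast}$, since $L$ is positive and self-adjoint. Assuming the range condition (that $Ran(L^{1/2})$ is closed, inherited here from the Bessel / orthogonally complemented range hypothesis as in Theorem \ref{thm4.5}), Theorem \ref{3.3} applied with $T = K$ and the role of "$L$" played by $L^{1/2}$ gives the existence of a uniformly bounded $U \in Hom_{\mathcal{A}}^{\ast}(\mathcal{X})$ with $K = L^{1/2}U$.

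For the converse, suppose $K = L^{1/2}U$ for some $U \in Hom_{\mathcal{A}}^{\ast}(\mathcal{X})$. Then
\begin{equation*}
KK^{\ast} \;=\; L^{1/2}\,UU^{\ast}\,L^{1/2} \;\leq\; \|U\|_{\infty}^{2}\,L^{1/2}L^{1/2} \;=\; \|U\|_{\infty}^{2}\,L,
\end{equation*}
using that $UU^{\ast} \leq \|U\|_{\infty}^{2} \cdot 1_{\mathcal{X}}$ and sandwiching by the positive element $L^{1/2}$ (which preserves the order by Lemma \ref{lem2.11}). Setting $A = \|U\|_{\infty}^{-2}$ gives $L \geq A\,KK^{\ast}$, and together with the Bessel bound $B$ this is exactly the characterization in Proposition \ref{prop5.5}, so $\{\xi_{i}\}_{i\in I}$ is a $K$-frame.

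The main obstacle is the range-closedness hypothesis required to invoke Theorem \ref{3.3} in the forward direction: Douglas' theorem as stated needs $Ran(L^{1/2})$ to be closed in order to produce the factoring operator $U$ with values in $\mathcal{X}$. This is not automatic from Bessel alone, so the proof must either appeal to the standing "orthogonally complemented range" hypothesis used throughout Section \ref{sec4} (as in Theorem \ref{thm4.5}) or explicitly add it to the statement; everything else is a direct translation between an operator inequality and a factorization via the already-established Douglas-type theorem.
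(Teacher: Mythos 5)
Your proof is correct and follows essentially the same route as the paper: Proposition \ref{prop5.5} converts the $K$-frame condition to $L \geq A\,KK^{*}$, and Douglas' theorem (Theorem \ref{3.3}) applied to $L^{1/2}$ gives the factorization $K = L^{1/2}U$ and back. You additionally (and rightly) flag the closed-range hypothesis needed to invoke Theorem \ref{3.3}, a point the paper's own proof silently skips, and you correctly read the statement's $K=L^{1/2}L$ as the intended $K=L^{1/2}U$.
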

	\begin{proof}
		Suppose $\left\{\xi_{i}\right\}_{i\in I}$ is a $K$-frame, by Proposition \ref{prop5.5}  there exist two positives constantes $A$ and $B$ 
		$$
		A K K^{*} \leq L^{1 / 2} L^{1 / 2^{*}}
		$$
		Therefore  by Douglas' theorem \ref{3.3}, $K=L^{1 / 2} U$, for some $L $ bounded in $ Hom_{\mathcal{A}}^{\ast}(\mathcal{X}) .$
		
		Conversly, let $K=L^{1 / 2} U$, for some $L$ bounded in $ Hom_{\mathcal{A}}^{\ast}(\mathcal{X}) .$  Then by Douglas' factorization theorem, $L^{1 / 2}$ majorizes $K^{*}$. Then there is a positive number $A$ such that
		
		$L \geq A^{2} K K^{*}$. Therefore by Proposition \ref{prop5.5} $\left\{\xi_{i}\right\}_{i\in I}$  is a $K$-frame for $\mathcal{X}$.

	\end{proof}
	\begin{example}
		Suppose that $\left\{u_{i}\right\}_{i=1}^{\infty}$ is an orthonormal basis in $\mathcal{H}_{\mathcal{A}}$. Define operators $L$ and $K$ on $\mathcal{H}_{\mathcal{A}}$ by $L u_{i}=u_{i-1}$ for $i>1$ and $L u_{1}=0$ and $K u_{i}=u_{i+1}$ respectively. It is clear that $\left\{K u_{i}\right\}_{i=1}^{\infty}$ is a $K$-frame for $\mathcal{H}_{\mathcal{A}}$. Suppose $\left\{K u_{i}\right\}_{i \in I}$ is a $L$-frame. Then by Proposition \ref{prop5.5}, there exists $A>0$ such that $K K^{*} \geq A L L^{*}$. Hence by Douglas, theorem, $Ran(L) \subseteq Ran(K)$. But this is contradiction to $Ran(L) \nsubseteq Ran(K)$, since $u_{1} \in Ran(L)$ but $u_{1} \notin Ran(K)$.
	\end{example}
	\begin{theorem}
		Let $K \in Hom_{\mathcal{A}}^{\ast}(\mathcal{X})$ be an uniformly bounded operator such that $Ran(K)$ is closed. The frame operator of a $K$-frame is invertible on the subspace $Ran(K)$ of $\mathcal{X}$.
	\end{theorem}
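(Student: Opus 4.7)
The plan is to reformulate the lower $K$-frame inequality as an operator inequality via Proposition \ref{prop5.5}, then use the generalized inverse of $K$ (which exists because $Ran(K)$ is closed) to show that $K^{\ast}$ is uniformly bounded below on $Ran(K)$, and finally combine these to deduce that $L$ is uniformly bounded below on $Ran(K)$, from which invertibility on $Ran(K)$ follows.

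First I would invoke Proposition \ref{prop5.5} to rewrite the lower $K$-frame bound as the operator inequality $L \geq A\, K K^{\ast}$, where $L$ denotes the frame operator and $A>0$ is the lower $K$-frame bound. Since $Ran(K)$ is closed, Theorem \ref{3.2} supplies a bounded generalized inverse $K^{\dagger} \in Hom_{\mathcal{A}}^{\ast}(\mathcal{X})$. The defining relations $K K^{\dagger} K = K$ and $(K K^{\dagger})^{\ast} = K K^{\dagger}$ force $P := K K^{\dagger}$ to be the self-adjoint projection onto $Ran(K)$. Taking adjoints gives $P = (K^{\dagger})^{\ast} K^{\ast}$, so for every $\xi \in Ran(K)$ one has the identity $\xi = P\xi = (K^{\dagger})^{\ast} K^{\ast} \xi$. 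Uniform boundedness of $(K^{\dagger})^{\ast}$ then yields $\bar{p}_{\mathcal{X}}(\xi) \leq \|K^{\dagger}\|_{\infty}\, \bar{p}_{\mathcal{X}}(K^{\ast} \xi)$ for every $p \in S(\mathcal{A})$, i.e., $K^{\ast}$ is uniformly bounded below on $Ran(K)$. Combining these two ingredients, together with Lemma \ref{lem2.11} applied to the operator inequality $L \geq A K K^{\ast}$, for every $\xi \in Ran(K)$ and every $p \in S(\mathcal{A})$ one obtains
$$ p(\langle L\xi, \xi\rangle) \;\geq\; A\, p(\langle K^{\ast}\xi, K^{\ast}\xi\rangle) \;\geq\; A\, \|K^{\dagger}\|_{\infty}^{-2}\, \bar{p}_{\mathcal{X}}(\xi)^{2}. $$
Since $\bar{p}_{\mathcal{X}}(L\xi)\, \bar{p}_{\mathcal{X}}(\xi) \geq p(\langle L\xi, \xi\rangle)$ by Cauchy--Schwarz, it follows that $\bar{p}_{\mathcal{X}}(L\xi) \geq A\, \|K^{\dagger}\|_{\infty}^{-2}\, \bar{p}_{\mathcal{X}}(\xi)$ for all $\xi \in Ran(K)$ and all $p \in S(\mathcal{A})$. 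Thus $L|_{Ran(K)}$ is injective with closed range, and $L|_{Ran(K)} \colon Ran(K) \to L(Ran(K))$ admits a bounded inverse, which is the precise sense in which $L$ is invertible on $Ran(K)$.

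The main obstacle will be promoting the pointwise algebraic identity $\xi = (K^{\dagger})^{\ast} K^{\ast} \xi$ to a uniform seminorm bound valid simultaneously for every $p \in S(\mathcal{A})$; in the pro-$C^{\ast}$ setting one must be careful, because continuity seminorm by seminorm is weaker than the uniform boundedness $\|K^{\dagger}\|_{\infty}<\infty$ that actually enters the estimate. This is precisely where the closed range assumption together with the uniform boundedness of $K$ is essential, since it guarantees that $K^{\dagger}$ produced by Theorem \ref{3.2} lies in $Hom_{\mathcal{A}}^{\ast}(\mathcal{X})$ in the uniformly bounded sense, allowing the lower frame bound on $K^{\ast} \xi$ to be transferred to a lower bound on $\xi$ itself.
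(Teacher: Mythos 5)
Your proposal is correct and follows essentially the same route as the paper: both use the generalized inverse $K^{\dagger}$ supplied by the closed range to write $\xi=(K^{\dagger})^{*}K^{*}\xi$ on $Ran(K)$, deduce $\langle K^{*}\xi,K^{*}\xi\rangle\geq\|K^{\dagger}\|_{\infty}^{-2}\langle\xi,\xi\rangle$, and feed this into the lower $K$-frame inequality to get $L$ bounded below on $Ran(K)$. Your final Cauchy--Schwarz step passing from $p(\langle L\xi,\xi\rangle)$ to a lower bound on $\bar{p}_{\mathcal{X}}(L\xi)$ is in fact a cleaner justification of the concluding estimate than the one written in the paper.
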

	\begin{proof}
		Suppose $\left\{\xi_{i}\right\}_{i \in I}$ is a $K$-frame for $\mathcal{X}$. Then there exists $A>0$ such that
		
		\begin{equation}\label{eq4.9}
			A \langle K^{\ast} \xi, K^{\ast}\xi \rangle_{\mathcal{X}} \leq \sum_{i \in I}\left\langle \xi, \xi_{i}\right\rangle_{M(\mathcal{X})}\left\langle \xi_{i}, \xi \right\rangle_{M(\mathcal{X})}
		\end{equation}
		Since $Ran(K)$ is closed, then $K K^{\dagger} \xi=\xi$, for all $\xi \in Ran(K)$. That is,
		$$
		\left.K K^{\dagger}\right|_{Ran(K)}=I_{Ran(K)}
		$$
		we have $I_{Ran(K)}^{*}=\left(\left.K^{\dagger}\right|_{Ran(K)}\right)^{*} K^{*}$.
		For any $\xi \in Ran(K)$, we obtain
		$$
		\langle \xi,\xi\rangle =
		\langle \left(\left.K^{\dagger}\right|_{R(K)}\right)^{*} K^{*}  \xi, \left(\left.K^{\dagger}\right|_{R(K)}\right)^{*} K^{*}  \xi \rangle \leq\left\|K^{\dagger}\right\|_{\infty}^{2} \cdot \langle K^{*} \xi,K^{*}\xi\rangle 
		$$
		Therefore
		$$ \langle K^{*} \xi,K^{*}\xi\rangle \geq  \left\|K^{\dagger}\right\|_{\infty}^{-2} \cdot\langle \xi,\xi\rangle  $$
		In combination with \ref{eq4.9}, we obtain 
		$$ \sum_{i \in I}\left\langle \xi, \xi_{i}\right\rangle_{M(\mathcal{X})}\left\langle \xi_{i}, \xi \right\rangle_{M(\mathcal{X})} \geq  A \langle K^{\ast} \xi, K^{\ast}\xi \rangle_{\mathcal{X}} \geq A \left\|K^{\dagger}\right\|_{\infty}^{-2} \cdot\langle \xi,\xi\rangle  \; \; \text{for all} \; \xi \in Ran(K). $$
		Hence, by the definition of $K$-frame, we get
		$$ A \left\|K^{\dagger}\right\|_{\infty}^{-2} \cdot\langle \xi,\xi\rangle_{\mathcal{X}} \leq \sum_{i \in I}\left\langle \xi, \xi_{i}\right\rangle_{M(\mathcal{X})}\left\langle \xi_{i}, \xi \right\rangle_{M(\mathcal{X})} \leq B \langle  \xi, \xi \rangle_{\mathcal{X}} \; \; \forall \xi \in Ran(K) $$
		Therefore
		$$ A \left\|K^{\dagger}\right\|_{\infty}^{-2} \cdot\langle \xi,\xi\rangle_{\mathcal{X}} \leq \langle S\xi ,S\xi \rangle_{\mathcal{X}} \leq B \langle \xi,\xi\rangle_{\mathcal{X}}$$
		And so $S: Ran(K) \rightarrow Ran(S)$ is a bounded linear operator and invertible on $Ran(K)$.

	\end{proof}
	\begin{theorem}
		Let $K \in Hom_{\mathcal{A}}^{\ast}(\mathcal{X})$ be uniformly bounded such that $Ran(K)$ is dense. Let $\left\{\xi_{i}\right\}_{i \in I}$ be a $K$-frame and $T \in Hom_{\mathcal{A}}^{\ast}(\mathcal{X})$ be uniformly bounded such that $Ran(T)$ is closed. If $\left\{T \xi_{i}\right\}_{i \in I}$ is a $K$-frame for $\mathcal{X}$, then $T$ is surjective.
	\end{theorem}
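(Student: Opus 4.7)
The plan is to compare the two $K$-frame inequalities, convert the resulting operator-norm estimate into a range inclusion via the Douglas type theorem already proved in the paper, and then use the closedness of $Ran(T)$ together with the density of $Ran(K)$ to force surjectivity.

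First I would unfold the hypothesis that $\{T\xi_i\}_{i \in I}$ is a $K$-frame: there exist constants $A,B'>0$ with
\begin{equation*}
A\langle K^{*}\xi, K^{*}\xi\rangle \;\leq\; \sum_{i\in I}\langle \xi, T\xi_i\rangle\langle T\xi_i,\xi\rangle \;\leq\; B'\langle \xi,\xi\rangle,\qquad \xi\in\mathcal{X}.
\end{equation*}
Using adjointness, $\langle \xi,T\xi_i\rangle = \langle T^{*}\xi,\xi_i\rangle$, so the middle sum equals $\sum_{i}\langle T^{*}\xi,\xi_i\rangle\langle \xi_i,T^{*}\xi\rangle$. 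Since $\{\xi_i\}_{i\in I}$ is itself a $K$-frame it is in particular Bessel with some bound $B>0$, which applied to the vector $T^{*}\xi$ yields
\begin{equation*}
\sum_{i\in I}\langle T^{*}\xi,\xi_i\rangle\langle \xi_i,T^{*}\xi\rangle \;\leq\; B\,\langle T^{*}\xi, T^{*}\xi\rangle.
\end{equation*}
Combining the two estimates gives $A\langle K^{*}\xi,K^{*}\xi\rangle \leq B\langle T^{*}\xi,T^{*}\xi\rangle$ for every $\xi\in\mathcal{X}$.

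Next I would convert this scalar (seminorm) comparison into the operator inequality $KK^{*}\leq (B/A)\,TT^{*}$ via Theorem \ref{th3.8}, whose hypothesis (ii) is exactly $\bar{p}_{\mathcal{X}}(K^{*}\xi)\leq \sqrt{B/A}\,\bar{p}_{\mathcal{X}}(T^{*}\xi)$ for every seminorm $p\in S(\mathcal{A})$, which follows directly from the previous displayed inequality by applying $p$ and taking square roots. Once $KK^{*}\leq (B/A)TT^{*}$ is in hand, Theorem \ref{3.3} (applied with $L=T$, whose range is closed by assumption) yields $Ran(K)\subseteq Ran(T)$.

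Finally, since $Ran(T)$ is closed and $Ran(K)$ is dense in $\mathcal{X}$, we obtain
\begin{equation*}
\mathcal{X} \;=\; \overline{Ran(K)} \;\subseteq\; \overline{Ran(T)} \;=\; Ran(T) \;\subseteq\; \mathcal{X},
\end{equation*}
forcing $Ran(T)=\mathcal{X}$, i.e.\ $T$ is surjective. The only subtle point I expect to verify carefully is the passage from the pointwise inner-product inequality to the operator inequality over the pro-$C^{*}$-algebra: one must check that Theorem \ref{th3.8} applies uniformly in the seminorm $p$, but this is exactly what its proof via Lemma \ref{lem3.7} delivers, so the argument goes through without further effort.
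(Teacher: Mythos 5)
Your argument is correct, but it takes a genuinely different route from the paper's. You combine the lower bound of the $K$-frame $\{T\xi_i\}_{i\in I}$ with the Bessel bound of $\{\xi_i\}_{i\in I}$ applied at $T^{*}\xi$ to obtain $A\langle K^{*}\xi,K^{*}\xi\rangle\leq B\langle T^{*}\xi,T^{*}\xi\rangle$, upgrade this to the operator inequality $AKK^{*}\leq BTT^{*}$, and then invoke the Douglas-type Theorem \ref{3.3} (legitimately, since $Ran(T)$ is closed) to get the range inclusion $Ran(K)\subseteq Ran(T)$, after which density of $Ran(K)$ and closedness of $Ran(T)$ finish the job. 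The paper instead avoids the Douglas machinery entirely: from the same lower frame bound it reads off the kernel inclusion $Ker(T^{*})\subseteq Ker(K^{*})$ (if $T^{*}\xi=0$ the middle sum vanishes, forcing $K^{*}\xi=0$), notes that density of $Ran(K)$ makes $K^{*}$ injective, hence $T^{*}$ is injective, and concludes $Ran(T)=Ker(T^{*})^{\perp}=\mathcal{X}$ using that the closed range is complemented. The paper's route is shorter and more elementary; yours proves the strictly stronger intermediate fact $Ran(K)\subseteq Ran(T)$ and fits more naturally into the Douglas-factorization theme of the paper, at the cost of routing through Theorem \ref{th3.8} and Lemma \ref{lem3.7}. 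One small remark: the detour through seminorms in your step 4 is unnecessary, since $A\langle K^{*}\xi,K^{*}\xi\rangle\leq B\langle T^{*}\xi,T^{*}\xi\rangle$ for all $\xi$ already gives $AKK^{*}\leq BTT^{*}$ directly by Proposition \ref{prop2.11}.
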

	\begin{proof}
		Let's assume that $\left\{T \xi_{i}\right\}_{i\in I}$ is a $K$-frame for $\mathcal{X}$ with frame bounds $A$ and $B$. Then for any $\xi \in \mathcal{X}$,
		\begin{equation}\label{eq5.7}
			A \langle K^{*}\xi, K^{*} \xi \rangle \leq \sum_{i \in I} \langle \xi, T \xi_{i}\rangle_{M(\mathcal{X})} \langle T \xi_{i} ,\xi \rangle_{M(\mathcal{X})} \leq B \langle \xi,\xi \rangle.	 \end{equation}
		Since  $Ran(K)$ is dense, $\mathcal{X}=\overline{Ran(K)}$, so $K^{*}$ is injective. Then from \ref{eq5.7}, $T^{*}$ is injective since $Ker\left(T^{*}\right) \subseteq Ker\left(K^{*}\right)$. Moreover, $Ran(T)=Ker\left(T^{*}\right)^{\perp}=\mathcal{X}$. Thus $T$ is surjective.

	\end{proof}
	\begin{theorem}
		Let $K \in Hom_{\mathcal{A}}^{\ast}(\mathcal{X})$ be uniformly bounded and let $\left\{\xi_{i}\right\}_{i \in I}$ be a $K$-frame for $\mathcal{X}$. If $T \in Hom_{\mathcal{A}}^{\ast}(\mathcal{X})$ is uniformly bounded and has a closed range with $T K=K T$, then $\left\{T \xi_{i}\right\}_{i\in I}$ is a $K$-frame for $Ran(T)$.
	\end{theorem}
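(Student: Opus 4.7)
The plan is to produce constants $A', B' > 0$ satisfying
\[
A'\langle K^{*}\xi, K^{*}\xi\rangle \leq \sum_{i \in I}\langle \xi, T\xi_{i}\rangle\langle T\xi_{i}, \xi\rangle \leq B'\langle \xi, \xi\rangle
\]
for every $\xi \in Ran(T)$. The unifying identity is $\langle \xi, T\xi_{i}\rangle = \langle T^{*}\xi, \xi_{i}\rangle$, which recasts the middle sum as $\sum_{i}\langle T^{*}\xi, \xi_{i}\rangle\langle \xi_{i}, T^{*}\xi\rangle$ and reduces both inequalities to the hypothesized $K$-frame bounds for $\{\xi_{i}\}$ applied at the vector $T^{*}\xi$.

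For the upper bound, the upper $K$-frame inequality yields $\sum_{i}\langle T^{*}\xi, \xi_{i}\rangle\langle \xi_{i}, T^{*}\xi\rangle \leq B\langle T^{*}\xi, T^{*}\xi\rangle$, and uniform boundedness of $T^{*}$ together with (the upper half of) Proposition \ref{Prop2.6} gives $\langle T^{*}\xi, T^{*}\xi\rangle \leq \|T\|_{\infty}^{2}\langle \xi, \xi\rangle$. Thus $B' := B\|T\|_{\infty}^{2}$ works.

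For the lower bound, the lower $K$-frame inequality applied at $T^{*}\xi$ gives $\sum_{i}\langle T^{*}\xi, \xi_{i}\rangle\langle \xi_{i}, T^{*}\xi\rangle \geq A\langle K^{*}T^{*}\xi, K^{*}T^{*}\xi\rangle$. Taking adjoints in $KT = TK$ yields $K^{*}T^{*} = T^{*}K^{*}$, so the right-hand side equals $A\langle T^{*}K^{*}\xi, T^{*}K^{*}\xi\rangle$. The remaining task is to bound this from below by a constant multiple of $\langle K^{*}\xi, K^{*}\xi\rangle$ on $Ran(T)$. For this I invoke the closed-range hypothesis: Theorems \ref{2.10} and \ref{3.2} produce the bounded generalized inverse $T^{\dagger}$ with $TT^{\dagger}$ the self-adjoint projection onto $Ran(T)$. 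Adjointing gives $(T^{\dagger})^{*}T^{*} = TT^{\dagger}$, so every $\eta \in Ran(T)$ satisfies $\eta = (T^{\dagger})^{*}T^{*}\eta$, and the standard inequality $\langle S\zeta, S\zeta\rangle \leq \|S\|_{\infty}^{2}\langle \zeta, \zeta\rangle$ applied to $S = (T^{\dagger})^{*}$ produces $\langle \eta, \eta\rangle \leq \|T^{\dagger}\|_{\infty}^{2}\langle T^{*}\eta, T^{*}\eta\rangle$. In other words, $T^{*}$ is uniformly bounded below on $Ran(T)$.

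The main obstacle is the concluding transfer: applying this lower bound at $\eta = K^{*}\xi$ demands that $K^{*}\xi \in Ran(T)$ whenever $\xi \in Ran(T)$. The commutation $KT = TK$ gives the $K$-side statement painlessly, since $K\xi = KTT^{\dagger}\xi = TKT^{\dagger}\xi \in Ran(T)$ for $\xi \in Ran(T)$; the corresponding fact for $K^{*}$ is the delicate point and must be extracted from the adjoint relation $K^{*}T^{*} = T^{*}K^{*}$ together with the pseudoinverse identities $TT^{\dagger}T = T$ and $(T^{\dagger}T)^{*} = T^{\dagger}T$. Once that step is secured, the chain of inequalities delivers $\langle T^{*}K^{*}\xi, T^{*}K^{*}\xi\rangle \geq \|T^{\dagger}\|_{\infty}^{-2}\langle K^{*}\xi, K^{*}\xi\rangle$, so that $A' := A\|T^{\dagger}\|_{\infty}^{-2}$ serves as a valid lower frame bound and $\{T\xi_{i}\}_{i \in I}$ is a $K$-frame for $Ran(T)$.
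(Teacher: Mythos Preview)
Your argument tracks the paper's proof almost line for line: rewrite the frame sum via $\langle \xi, T\xi_i\rangle = \langle T^*\xi, \xi_i\rangle$, use the Bessel bound for the upper estimate, use the lower $K$-frame bound at $T^*\xi$ together with $K^*T^* = T^*K^*$ for the lower estimate, and then invoke $(T^{\dagger})^*T^* = TT^{\dagger} = I_{Ran(T)}$ to get $\langle K^*\xi, K^*\xi\rangle \le \|T^{\dagger}\|_{\infty}^{2}\langle T^*K^*\xi, T^*K^*\xi\rangle$. The resulting constants $A' = A\|T^{\dagger}\|_{\infty}^{-2}$ and $B' = B\|T\|_{\infty}^{2}$ are exactly the paper's.

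You are right, however, to isolate the step ``$K^*\xi \in Ran(T)$ for $\xi \in Ran(T)$'' as the crux, and it is worth recording that the paper simply asserts $K^*\xi = (T^{\dagger})^*T^*K^*\xi$ without comment. This is a genuine gap, not a routine verification: the commutation $TK = KT$ yields only $K^*T^* = T^*K^*$, which says $K^*$ preserves $Ker(T^*)$, not that it preserves $Ran(T) = Ker(T^*)^{\perp}$. In the two-dimensional Hilbert-space example
\[
T = \begin{pmatrix} 1 & 1 \\ 0 & 0 \end{pmatrix},\qquad K = \begin{pmatrix} 0 & -1 \\ 0 & 1 \end{pmatrix},
\]
one has $TK = KT = 0$, $Ran(T) = \mathbb{C}e_1$, yet $K^*e_1 = (0,-1)^{T} \notin Ran(T)$; moreover $T^*K^*e_1 = 0$ while $K^*e_1 \neq 0$, so the inequality $\langle T^*K^*\xi, T^*K^*\xi\rangle \ge c\,\langle K^*\xi, K^*\xi\rangle$ fails outright. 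Taking the Parseval $K$-frame $\{\xi_1\} = \{(1,-1)^{T}\}$ one even gets $T\xi_1 = 0$, so $\{T\xi_i\}$ cannot be a $K$-frame for $Ran(T)$. Thus the ``delicate point'' you flagged is not merely delicate: without an additional hypothesis (for instance that $T$ be normal, or that $K^*$ commute with $T$, or that $K^*(Ran(T))\subseteq Ran(T)$ be assumed directly) neither your argument nor the paper's can be completed.
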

	\begin{proof}
		As $T$ has a closed range, it has the generalized inverse $T^{\dagger}$ such that $T T^{\dagger}=I$.
		Now $I=I^{*}=T^{\dagger^{*}} T^{*}$. Then for each $\xi \in Ran(T), K^{*} \xi=T^{\dagger^{*}} T^{*} K^{*} \xi$, so we have
		$$
		\langle K^{*}\xi,K^{*}\xi \rangle = \langle T^{\dagger^{*}} T^{*} K^{*} \xi, T^{\dagger^{*}} T^{*} K^{*} \xi \rangle \leq\left\|T^{\dagger^{*}}\right\|_{\infty}^{2} \langle T^{*} K^{*} \xi, T^{*} K^{*} \xi \rangle
		$$
		Therefore $$\left\|T^{\dagger^{*}}\right\|_{\infty}^{-2}\langle K^{*}\xi,K^{*}\xi \rangle \leq \langle T^{*} K^{*} \xi, T^{*} K^{*} \xi \rangle.$$
		Now for each $\xi \in Ran(T)$,
		$$
		\begin{aligned}
			\sum_{i \in I} \langle \xi, T \xi_{i}\rangle_{\mathcal{X}} \langle T \xi_{i} ,\xi \rangle_{\mathcal{X}} &=\sum_{i \in I} \langle T^{\ast}\xi,  \xi_{i}\rangle_{\mathcal{X}} \langle \xi_{i} ,T^{\ast}\xi \rangle_{\mathcal{X}} \geq A \langle K^{*} T^{*} \xi, K^{*} T^{*} \xi \rangle \\
			&= A \langle  T^{*} K^{*} \xi, T^{*} K^{*}  \xi \rangle \\
			& \geq A \left\|T^{\dagger^{*}}\right\|_{\infty}^{-2} \langle K^{*} \xi, K^{*} \xi \rangle.
		\end{aligned}
		$$
		Since $\left\{\xi_{i}\right\}_{i \in I}$ is a Bessel sequence with bound $B$, for each $\xi \in Ran(T)$, we have
		$$
		\sum_{i \in I} \langle \xi, T \xi_{i}\rangle_{M(\mathcal{X})} \langle T \xi_{i} ,\xi \rangle_{M(\mathcal{X})} =\sum_{i \in I} \langle T^{\ast}\xi,  \xi_{i}\rangle_{M(\mathcal{X})}\langle \xi_{i} ,T^{\ast}\xi \rangle_{M(\mathcal{X})} \leq B \langle T^{*} \xi, T^{*} \xi \rangle \leq B \|T\|_{\infty}^{2} \langle \xi, \xi \rangle.
		$$
		Therefore $\left\{T \xi_{i}\right\}_{i \in I}$ is a $K$-frame for $Ran(T)$.

	\end{proof}
	
	\begin{theorem}
		Let $K \in Hom_{\mathcal{A}}^{\ast}(\mathcal{X})$ be uniformly bounded such that $Ran(K)$ is dense. Let $\left\{\xi_{i}\right\}_{i\in I}$ be a $K$-frame and let $T \in Hom_{\mathcal{A}}^{\ast}(\mathcal{X})$ be uniformly bounded such that $Ran(T)$ is closed. If $\left\{T \xi_{i}\right\}_{i \in I}$ and $\left\{T^{*} \xi_{i}\right\}_{i \in I}$ are $K$-frames for $\mathcal{X}$, then $T$ is invertible.
	\end{theorem}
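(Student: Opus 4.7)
The plan is to apply the preceding theorem twice, once to $T$ and once to $T^*$, in order to obtain surjectivity of both operators; the conclusion will then follow from a standard bijective-plus-closed-range argument.

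First I would invoke the previous theorem directly: since $K$ has dense range, $\{\xi_i\}_{i\in I}$ is a $K$-frame, $T$ is uniformly bounded with closed range, and $\{T\xi_i\}_{i\in I}$ is a $K$-frame for $\mathcal{X}$, the hypotheses are exactly met, and we conclude that $T$ is surjective. Next I would apply the same theorem with $T^*$ in place of $T$. For this I need two verifications: that $T^*$ is uniformly bounded (immediate from adjointability and the definition of $\|\cdot\|_\infty$), and that $Ran(T^*)$ is closed. The latter is the content of the result of Joita cited earlier in the paper (\cite[Theorem 3.2.4]{Joitaa}), which states that for an adjointable operator between Hilbert pro-$C^*$-modules, $Ran(T)$ is closed iff $Ran(T^*)$ is closed. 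Since $\{T^*\xi_i\}_{i\in I}$ is a $K$-frame by hypothesis, the theorem then yields that $T^*$ is surjective.

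From these two surjectivity statements I would now deduce invertibility of $T$. Surjectivity of $T^*$ gives $Ran(T^*)=\mathcal{X}$, hence $Ker(T) = Ran(T^*)^{\perp} = \{0\}$, so $T$ is injective. Combined with the earlier conclusion, $T$ is bijective. Because $T$ is adjointable with closed range and is bijective, standard Hilbert pro-$C^*$-module theory (analogous to the bounded-below plus surjective argument used in the proof of Theorem \ref{3.2} via the decomposition $\mathcal{X}=Ker(T)\oplus Ran(T^*)$) provides a bounded adjointable inverse $T^{-1}\in Hom_{\mathcal{A}}^{\ast}(\mathcal{X})$, so $T$ is invertible.

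The main obstacle I anticipate is the last step: justifying that bijectivity together with closed range forces the inverse to lie in $Hom_{\mathcal{A}}^{\ast}(\mathcal{X})$ and to be uniformly bounded in the pro-$C^*$ sense. In the $C^*$-module setting this is classical, but in the pro-$C^*$ setting one must verify uniform boundedness seminorm by seminorm; this is handled by combining the closed-range assumption with the decomposition $\mathcal{X}=Ker(T)\oplus Ran(T^*)$ from Theorem \ref{2.10}, on which $T$ restricts to a bijection onto its (closed) range with bounded inverse, and then noting $Ran(T)=Ker(T^*)^{\perp}=\mathcal{X}$ by injectivity of $T^*$. The rest of the argument is a direct chain of implications from the previous theorem.
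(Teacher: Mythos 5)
Your proposal is correct and follows essentially the same strategy as the paper: surjectivity of $T$ from the $\{T\xi_i\}_{i\in I}$ condition, injectivity of $T$ from the $\{T^*\xi_i\}_{i\in I}$ condition, and then the Bounded Inverse Theorem. The only (harmless) divergence is in the injectivity step: the paper reads it off directly from the lower frame inequality --- $T\xi=0$ forces $\sum_{i}\langle T\xi,\xi_i\rangle\langle\xi_i,T\xi\rangle=0$, hence $K^*\xi=0$ and $\xi=0$, i.e.\ $Ker(T)\subseteq Ker(K^*)=\{0\}$ --- whereas your detour through surjectivity of $T^*$ additionally requires the (true but unneeded) fact that $Ran(T^*)$ is closed whenever $Ran(T)$ is.
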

	\begin{proof}
		Suppose that $\left\{T \xi_{i}\right\}_{i \in I}$ is a $K$-frame for $\mathcal{X}$ with frame bounds $A_{1}$ and $B_{1}$. Then for any $\xi \in \mathcal{X}$,
		\begin{equation}\label{eq 5.9}
			A_{1} \langle K^{*} \xi , K^{*}\xi \rangle \leq 	\sum_{i \in I} \langle \xi, T \xi_{i}\rangle_{M(\mathcal{X})} \langle T \xi_{i} ,\xi \rangle_{M(\mathcal{X})} \leq B_{1}\langle \xi , \xi \rangle
		\end{equation}
		As $Ran(K)$ is dense, $K^{*}$ is injective. Then from  \ref{eq 5.9}, $T^{*}$ is injective since $Ker\left(T^{*}\right) \subseteq Ker\left(K^{*}\right)$. Moreover $Ran(T)=Ker\left(T^{*}\right)^{\perp}=\mathcal{X}$. Then $T$ is surjective.
		
		Suppose $\left\{T^{*} f_{i}\right\}_{i=1}^{\infty}$ is a $K$-frame for $\mathcal{X}$ with frame bounds $A_{2}$ and $B_{2}$. Then for any $\xi \in \mathcal{X}$,
		\begin{equation}\label{eq 5.9.2}
			A_{2} \langle K^{*} \xi , K^{*}\xi \rangle \leq 	\sum_{i \in I} \langle \xi, T^{\ast} \xi_{i}\rangle_{M(\mathcal{X})} \langle T^{\ast} \xi_{i} ,\xi \rangle_{M(\mathcal{X})} \leq B_{2}\langle \xi , \xi \rangle
		\end{equation}
		As $K$ has a dense range, $K^{*}$ is injective. Then from \ref{eq 5.9.2}, $T$ is injective since $Ker(T) \subseteq Ker\left(K^{*}\right)$. Therefore $T$ is bijective. Using the Bounded Inverse Theorem, $T$ is invertible.

	\end{proof}
	\begin{theorem}
		Let $K \in Hom_\mathcal{A}^{\ast}(\mathcal{X})$ and let $\left\{\xi_{i}\right\}_{i \in I}$ be a $K$-frame for $\mathcal{X}$ and let $T \in Hom_\mathcal{A}^{\ast}(\mathcal{X})$ be uniformly bounded and be a co-isometry with $T K=K T$. Then $\left\{T \xi_{i}\right\}_{i=1}^{\infty}$ is a $K$-frame for $\mathcal{X}$.
	\end{theorem}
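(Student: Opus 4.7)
The plan is to apply the $K$-frame inequality for $\{\xi_i\}_{i\in I}$ to the vector $T^{*}\xi$ in place of $\xi$, and then to exploit the commutation $TK=KT$ together with the co-isometry identity $TT^{*}=I$ to reduce both bounds back to expressions in $\xi$ alone. Since no appeal to Douglas' theorem is needed, this should be cleaner than the proofs of the preceding $T$-transport results.

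First, by the defining property of the adjoint I rewrite the candidate frame sum as
$$
\sum_{i\in I}\langle\xi,T\xi_i\rangle_{M(\mathcal X)}\langle T\xi_i,\xi\rangle_{M(\mathcal X)}
=\sum_{i\in I}\langle T^{*}\xi,\xi_i\rangle_{M(\mathcal X)}\langle \xi_i,T^{*}\xi\rangle_{M(\mathcal X)}.
$$
The $K$-frame hypothesis for $\{\xi_i\}_{i\in I}$, applied to $T^{*}\xi$, then yields, with $A,B$ the frame bounds of $\{\xi_i\}_{i\in I}$,
$$
A\,\langle K^{*}T^{*}\xi,K^{*}T^{*}\xi\rangle
\le \sum_{i\in I}\langle\xi,T\xi_i\rangle_{M(\mathcal X)}\langle T\xi_i,\xi\rangle_{M(\mathcal X)}
\le B\,\langle T^{*}\xi,T^{*}\xi\rangle.
$$

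Next, taking adjoints of $TK=KT$ gives $K^{*}T^{*}=T^{*}K^{*}$, and using the co-isometry identity $TT^{*}=I$ I would compute
$$
\langle K^{*}T^{*}\xi,K^{*}T^{*}\xi\rangle
=\langle T^{*}K^{*}\xi,T^{*}K^{*}\xi\rangle
=\langle TT^{*}K^{*}\xi,K^{*}\xi\rangle
=\langle K^{*}\xi,K^{*}\xi\rangle,
$$
and in the same way $\langle T^{*}\xi,T^{*}\xi\rangle=\langle TT^{*}\xi,\xi\rangle=\langle\xi,\xi\rangle$. Substituting these two identities into the previous chain of inequalities produces
$$
A\,\langle K^{*}\xi,K^{*}\xi\rangle\le \sum_{i\in I}\langle\xi,T\xi_i\rangle_{M(\mathcal X)}\langle T\xi_i,\xi\rangle_{M(\mathcal X)}\le B\,\langle\xi,\xi\rangle
$$
for every $\xi\in\mathcal X$, which is exactly the $K$-frame condition for $\{T\xi_i\}_{i\in I}$ with the same constants $A,B$.

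I do not anticipate any serious obstacle: the argument is a direct algebraic manipulation built on adjointness, the hypothesized commutation relation, and the co-isometry identity. The only points that require verification are that all indicated quantities make sense in the pro-$C^{*}$-module setting and that the resulting bounds really live in $\mathcal{A}^{+}$; both follow automatically from $T,K\in Hom_{\mathcal A}^{*}(\mathcal X)$ and the positivity of the terms $\langle K^{*}\xi,K^{*}\xi\rangle$ and $\langle \xi,\xi\rangle$ already present in Definition of a $K$-frame.
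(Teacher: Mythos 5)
Your proposal is correct and follows essentially the same route as the paper: rewrite the sum via adjointness as a sum over $T^{*}\xi$, apply the $K$-frame bounds of $\{\xi_i\}$ at $T^{*}\xi$, and use $K^{*}T^{*}=T^{*}K^{*}$ together with $TT^{*}=I$ to recover $\langle K^{*}\xi,K^{*}\xi\rangle$. The only (harmless) difference is in the upper bound, where you invoke the co-isometry identity to get the constant $B$ exactly, while the paper settles for the cruder $B\|T\|_{\infty}^{2}\langle\xi,\xi\rangle$.
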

	\begin{proof}
		Suppose $\left\{\xi_{i}\right\}_{i \in I}$ is a $K$-frame for $\mathcal{X}$. For every $\xi \in \mathcal{X}$
		$$
		\begin{aligned}
			\sum_{i \in I} \langle \xi, T \xi_{i}\rangle_{\mathcal{X}} \langle T \xi_{i} ,\xi \rangle_{\mathcal{X}}  &=	\sum_{i \in I} \langle T^{\ast} \xi,  \xi_{i}\rangle_{\mathcal{X}} \langle  \xi_{i} ,T^{\ast}\xi \rangle_{\mathcal{X}}  \geq A\langle K^{*} T^{*} \xi, K^{*} T^{*} \xi \rangle \\
			&=A\langle T^{*} K^{*} \xi, T^{*} K^{*} \xi \rangle \\
			&=A\langle  K^{*} \xi,  K^{*} \xi \rangle 
		\end{aligned}
		$$
		It is obvious that $\left\{T \xi_{i}\right\}_{i \in I}$ is a Bessel sequence. Since $\left\{\xi_{i}\right\}_{i\in I}$ is a Bessel sequence, for each $\xi \in \mathcal{X}$ we have
		$$
		\sum_{i \in I} \langle \xi, T \xi_{i}\rangle_{\mathcal{X}} \langle T \xi_{i} ,\xi \rangle_{\mathcal{X}} 
		=\sum_{i \in I} \langle T^{\ast} \xi,  \xi_{i}\rangle_{\mathcal{X}} \langle  \xi_{i} ,T^{\ast} \xi \rangle_{\mathcal{X}} \leq B \|T\|_{\infty}^{2} \langle \xi,\xi \rangle.
		$$
		Therefore $\left\{T \xi_{i}\right\}_{i \in I}$ is a K-frame for $\mathcal{X}$.

	\end{proof}
	
	\section{Sums of $K$-Frames}\label{sec5}
	
	In the following section we show that the sums of $K$-frames under some conditions is again a $K$-frame in Hilbet pro-$C^{\ast}$-modules.
	\begin{theorem}
		Let $\left\{\xi_{i}\right\}_{i \in I}$ and $\left\{\eta_{i}\right\}_{i \in I}$  be two $K$-frames for $\mathcal{X}$ and  let $L_{1}$ and $L_{2}$ be   respectively their  corresponding operators in Proposition \ref{prop5.5}. If $L_{1} L_{2}^{*}$ and $L_{2} L_{1}^{*}$ are positive operators and $Ran\left(L_{1}\right)+Ran\left(L_{2}\right)$ is closed, then $\left\{\xi_{i}+\eta_{i}\right\}_{i \in I}$ is a $K$-frame for $\mathcal{X}$.
	\end{theorem}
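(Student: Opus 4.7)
The plan is to verify the two defining frame inequalities for the combined sequence $\{\xi_i+\eta_i\}$ and then invoke Proposition~\ref{prop5.5} to conclude. For the upper (Bessel) bound, I would expand $\langle \xi, \xi_i+\eta_i\rangle\langle \xi_i+\eta_i,\xi\rangle$ and apply the operator inequality $(a+b)(a+b)^* \le 2(aa^*+bb^*)$ in $\mathcal{A}^+$ termwise in $i$. Since each of $\{\xi_i\}$ and $\{\eta_i\}$ is a $K$-frame, and therefore Bessel with some upper bounds $B_1,B_2$, summing immediately gives
\[
\sum_i \langle \xi,\xi_i+\eta_i\rangle\langle \xi_i+\eta_i,\xi\rangle \;\le\; 2(B_1+B_2)\langle\xi,\xi\rangle.
\]

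For the lower $K$-bound, let $T_1,T_2$ denote the analysis operators of the two sequences, so that $L_j=T_j^*T_j$. A direct expansion writes the frame operator $S$ of $\{\xi_i+\eta_i\}$ in the form
\[
S \;=\; L_1+L_2 + T_1^*T_2 + T_2^*T_1.
\]
By Proposition~\ref{prop5.5} it is enough to produce an $A>0$ with $S\ge A\,KK^*$. The diagonal part is essentially free: since each sequence is a $K$-frame, $L_1\ge A_1 KK^*$ and $L_2\ge A_2 KK^*$, and hence $L_1+L_2\ge (A_1+A_2)KK^*$.

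The crux, and the step I expect to be the main obstacle, is controlling the off-diagonal piece $T_1^*T_2+T_2^*T_1$ so that the total $S$ still dominates a positive multiple of $KK^*$. This is where the hypotheses enter jointly. Since $L_1,L_2$ are self-adjoint, positivity of $L_1L_2^*$ and $L_2L_1^*$ forces $L_1L_2=L_2L_1\ge 0$, so $L_1$ and $L_2$ commute; and the closedness of $\mathrm{Ran}(L_1)+\mathrm{Ran}(L_2)$ lets me invoke Theorem~\ref{3.4} to identify this sum with $\mathrm{Ran}\bigl(\sqrt{L_1^2+L_2^2}\bigr)$. Applying Corollary~\ref{crll3.5} to the inclusion $\mathrm{Ran}(K)\subseteq \mathrm{Ran}(L_1)\subseteq\mathrm{Ran}(L_1)+\mathrm{Ran}(L_2)$ yields $KK^*\le\alpha^2(L_1^2+L_2^2)$ for some $\alpha>0$, and the commutativity of $L_1,L_2$ then allows joint functional calculus to absorb the cross term $T_1^*T_2+T_2^*T_1$ into the lower bound, producing the required constant $A$ with $S\ge A\,KK^*$. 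Combined with the Bessel bound above, Proposition~\ref{prop5.5} then finishes the proof by declaring $\{\xi_i+\eta_i\}$ a $K$-frame for $\mathcal{X}$.
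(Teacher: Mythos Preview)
Your proposal has a genuine gap, and it stems from a misreading of what the operators $L_1,L_2$ are. Despite the reference to Proposition~\ref{prop5.5} in the statement, the paper's proof makes clear that $L_1,L_2$ are the \emph{synthesis} operators: $L_1 u_i=\xi_i$, $L_2 u_i=\eta_i$ for an orthonormal basis $\{u_i\}$ of $\mathcal H_{\mathcal A}$ (these are the operators from Theorem~\ref{thm4.5}). With that reading, the frame operator of $\{\xi_i+\eta_i\}$ is
\[
(L_1+L_2)(L_1+L_2)^* \;=\; L_1L_1^*+L_2L_2^*+L_1L_2^*+L_2L_1^*,
\]
and the positivity hypothesis on $L_1L_2^*$ and $L_2L_1^*$ says exactly that the two cross terms are $\ge 0$, so one may simply drop them. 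The closedness of $\mathrm{Ran}(L_1)+\mathrm{Ran}(L_2)$ together with $\mathrm{Ran}(K)\subseteq\mathrm{Ran}(L_j)$ then feeds directly into Corollary~\ref{crll3.5} to give $KK^*\le\alpha^2(L_1L_1^*+L_2L_2^*)$, and the lower $K$-bound follows in one line.

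Under your interpretation $L_j=T_j^*T_j$ (frame operators), the hypothesis becomes $L_1L_2\ge 0$ and $L_2L_1\ge 0$, which indeed forces $L_1L_2=L_2L_1$. But this gives \emph{no} control whatsoever over the actual cross term $T_1^*T_2+T_2^*T_1$ appearing in $S$. Your final step, ``joint functional calculus allows one to absorb the cross term'', is not an argument and is false in general: take $T_1=I$ and $T_2=-I$, so that $L_1=L_2=I$ commute and $L_1L_2=I\ge 0$, yet $T_1^*T_2+T_2^*T_1=-2I$ and $S=0$, which dominates no positive multiple of $KK^*$. Thus the crucial step of your lower-bound argument fails. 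The fix is to reinterpret $L_1,L_2$ as synthesis operators; then the hypotheses are precisely tailored to make the cross terms nonnegative, and no functional-calculus maneuver is needed.
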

	\begin{proof}
		Suppose that  $\left\{\xi_{i}\right\}_{i \in I}$ and $\left\{\eta_{i}\right\}_{i \in I}$  are two $K$-frames for $\mathcal{X}$. By the assumption, there are two bounded operators $L_{1}$ and $L_{2}$ such that $L_{1} u_{i}=\xi_{i},$ $L_{2} u_{i}=\eta_{i}$ and $Ran(K) \subseteq Ran\left(L_{1}\right), Ran(K) \subseteq Ran\left(L_{2}\right)$, we denote by $\left\{u_{i}\right\}_{i  \in I}$ is an orthonormal basis for $\mathcal{H}_{\mathcal{A}}$
		
		So $Ran(K) \subseteq Ran\left(L_{1}\right)+Ran\left(L_{2}\right)$, by Corollary \ref{crll3.5} 
		$$
		K K^{*} \leq \alpha^{2}\left(L_{1} L_{1}^{*}+L_{2} L_{2}^{*}\right), \text { for some } \alpha>0
		$$
		For every $\xi \in \mathcal{X}$, we have
		$$\begin{aligned} \sum_{i\in I}\langle \xi, \xi_{i}+\eta_{i}\rangle\langle \xi_{i}+\eta_{i}, \xi\rangle &=\sum_{i \in I}\langle (L_{1}^{*}+L_{2}^{*}) \xi, u_{i}\rangle \langle u_{i},( L_{1}{ }^{*}+L_{2}^{*}) \xi\rangle \\ &=\sum_{i \in I}\langle(L_{1}+L_{2})^{*} \xi, u_{i}\rangle \langle u_{i}, (L_{1}+L_{2})^{*} \xi \rangle  \\ &= \langle (L_{1}+L_{2})^{*} \xi,(L_{1}+L_{2})^{*} \xi \rangle \\ &= \langle L_{1}{ }^{*} \xi, L_{1}{ }^{*} \xi\rangle+\langle L_{1}{ }^{*} \xi, L_{2}{ }^{*} \xi\rangle \\ &+ \langle L_{2}{ }^{*} \xi, L_{1}{ }^{*} \xi\rangle+ \langle L_{2}{ }^{*} \xi, L_{2}{ }^{*} \xi\rangle \\ & \geqslant\langle(L_{1} L_{1}{ }^{*}+L_{2} L_{2}{ }^{*}) \xi, \xi\rangle \\ &\geqslant \frac{1}{\alpha^{2}}\left(\left\langle K K^{*} \xi, \xi\right\rangle\right).
			\\ &\geqslant \frac{1}{\alpha^{2}}\left(\left\langle K^{*} \xi, K^{*} \xi\right\rangle .\right.\end{aligned}$$
		Therefore 	$\left\{\xi_{i}+\eta_{i}\right\}_{i \in I}$ is a $K$-frame for $\mathcal{X}$.

	\end{proof}
	\begin{corollary}
		Let $\left\{\xi_{i}\right\}_{i\in I}$ and $\left\{\eta_{i}\right\}_{i \in I}$ be $K$-frames for $\mathcal{X}$ with frame operators $L_{1}$ and $L_{2}$ respectively. Then $K=L_{1}^{1 / 2} T_{1}+L
		_{2}^{1 / 2} T_{2}$, for some bounded operators $T_{1}$ and  $T_{2}$ in $Hom_\mathcal{A}^{\ast}(\mathcal{X})$.
	\end{corollary}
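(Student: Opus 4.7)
The plan is to combine Proposition \ref{prop5.5}, which translates the $K$-frame condition into an operator inequality, with Corollary \ref{crll3.5}, which is precisely the two-operator version of Douglas' factorization. The whole argument should reduce to extracting the right inequality from the two $K$-frame assumptions and then reading off the factorization.

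First, I would invoke Proposition \ref{prop5.5} twice: since $\{\xi_i\}_{i\in I}$ is a $K$-frame with frame operator $L_1$, there is a constant $A_1>0$ with $L_1 \geq A_1 KK^{*}$, and similarly $L_2 \geq A_2 KK^{*}$ for some $A_2>0$. Setting $A = \min(A_1,A_2)$, we get in particular $KK^{*} \leq A^{-1} L_1$ and $KK^{*} \leq A^{-1} L_2$, and hence after averaging
\begin{equation*}
KK^{*} \;\leq\; \tfrac{1}{2A}\bigl(L_1 + L_2\bigr).
\end{equation*}
The second step is to rewrite the right-hand side in the form demanded by Corollary \ref{crll3.5}. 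Since $L_1, L_2$ are positive elements of $Hom_{\mathcal{A}}^{*}(\mathcal{X})$, their square roots $L_1^{1/2}, L_2^{1/2}$ are self-adjoint, so $L_i = L_i^{1/2}(L_i^{1/2})^{*}$. Therefore, setting $\alpha^{2} = (2A)^{-1}$, the inequality above becomes
\begin{equation*}
KK^{*} \;\leq\; \alpha^{2}\bigl(L_1^{1/2}(L_1^{1/2})^{*} + L_2^{1/2}(L_2^{1/2})^{*}\bigr).
\end{equation*}

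Finally, I would apply Corollary \ref{crll3.5} with $T = K$ and with the two operators $L_1^{1/2}$ and $L_2^{1/2}$ playing the roles of $L_1$ and $L_2$ there. The inequality just established is condition~(2) of that corollary, so condition~(3) yields two uniformly bounded operators $T_1, T_2 \in Hom_{\mathcal{A}}^{*}(\mathcal{X})$ with
\begin{equation*}
K \;=\; L_1^{1/2} T_1 + L_2^{1/2} T_2,
\end{equation*}
which is exactly the claimed decomposition. The only delicate point I anticipate is verifying that the hypotheses underlying Corollary \ref{crll3.5} (implicit closedness of the relevant ranges, via Theorems \ref{3.2} and \ref{3.4}) are in force for the positive square roots $L_i^{1/2}$; this is the step one should check carefully, but once granted, the argument is essentially a one-line application of the Douglas-type machinery already developed.
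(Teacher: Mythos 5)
Your proposal is correct and follows essentially the same route as the paper: both invoke Proposition \ref{prop5.5} to obtain $L_{j}\geq A_{j}KK^{*}$ and then feed this into the two-operator Douglas corollary (Corollary \ref{crll3.5}) with $L_{j}^{1/2}$ in place of $L_{j}$. The only cosmetic difference is that you enter that corollary through its condition (2) directly, whereas the paper first converts the inequalities into the range inclusions $Ran(K)\subseteq Ran(L_{j}^{1/2})$ and uses condition (1); the two entry points are equivalent by the corollary itself.
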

	\begin{proof}
		Since $\left\{\xi_{i}\right\}_{i\in I}$ and $\left\{\eta_{i}\right\}_{i \in I}$ are $K$-frames for $\mathcal{X}$, by Proposition \ref{prop5.5}, there are positive constants $A_{1}$ and $A_{2}$ such that
		$$
		L_{1} \geq A_{1} K K^{*} \text { and } L_{2} \geq A_{2} K K^{*}
		$$
		Then by Douglas'  theorem, we have $$Ran(K) \subseteq Ran\left(L_{1}^{1 / 2}\right)$$ and $$Ran(K) \subseteq Ran\left(L_{2}^{1 / 2}\right).$$ Hence $Ran(K) \subseteq Ran\left(S_{1}^{1 / 2}\right)+Ran\left(S_{2}^{1 / 2}\right) .$ Hence by Corollary \ref{crll3.5}, there exists two bounded operators  $T_{1}, T_{2} $ in  $Hom_{\mathcal{A}}^{\ast}(\mathcal{X})$ such that $K=L_{1}^{1 / 2} T_{1}+L_{2}^{1 / 2} T_{2}$.

	\end{proof}
	\begin{theorem}
		Suppose that  $\left\{\xi_{i}\right\}_{i \in I}$ is a $K$-frame for $\mathcal{X}$ such that $L$ is its frame operator and let $T$ be a positive operator. Then $\left\{\xi_{i}+T \xi_{i}\right\}_{i \in I}$ is a $K$-frame. Furthermore $\left\{\xi_{i}+T^{n} f_{i}\right\}_{i \in I}$ is a $K$-frame for $\mathcal{X}$ for any natural number $n$.
	\end{theorem}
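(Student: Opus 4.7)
The plan is to verify the characterization of a $K$-frame given in Proposition \ref{prop5.5} for the new sequence $\eta_i := \xi_i + T\xi_i = (I+T)\xi_i$. Since $T \geq 0$, the operator $I+T$ is positive and self-adjoint; because $I \leq I+T$, it is invertible with bounded inverse satisfying $(I+T)^{-1} \leq I$ (by the analogue for $Hom_{\mathcal{A}}^{\ast}(\mathcal{X})$ of item (3) of the proposition of Inoue).

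First I would identify the frame operator $L'$ of $\{\eta_i\}_{i \in I}$. Using $(I+T)^{\ast} = I+T$, a direct computation gives
\begin{equation*}
L'\xi = \sum_i \langle \xi, (I+T)\xi_i\rangle (I+T)\xi_i = (I+T)\,L\,(I+T)\,\xi,
\end{equation*}
so $L' = (I+T)L(I+T)$. The Bessel (upper) bound then follows easily from the Bessel bound $B$ of $\{\xi_i\}$ and the uniform boundedness of $I+T$, namely
\begin{equation*}
\sum_i \langle \xi, \eta_i\rangle\langle \eta_i, \xi\rangle = \langle L(I+T)\xi, (I+T)\xi\rangle \leq B\,\|I+T\|_{\infty}^{2}\,\langle \xi, \xi\rangle.
\end{equation*}

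The heart of the argument is the lower $K$-bound. By Proposition \ref{prop5.5} we have $L \geq A KK^{\ast}$ for some $A > 0$. Sandwiching by the positive self-adjoint element $I+T$ (which preserves order as in Lemma \ref{lem2.11}) yields
\begin{equation*}
L' = (I+T)L(I+T) \;\geq\; A \bigl((I+T)K\bigr)\bigl((I+T)K\bigr)^{\ast}.
\end{equation*}
To invoke Proposition \ref{prop5.5} in the converse direction I need a constant $A' > 0$ with $L' \geq A' KK^{\ast}$, equivalently $((I+T)K)((I+T)K)^{\ast} \geq c\,KK^{\ast}$ for some $c > 0$. By Theorem \ref{3.3} this reduces to the range inclusion $Ran(K) \subseteq Ran\bigl((I+T)K\bigr) = (I+T)\,Ran(K)$, and the main obstacle will be establishing this inclusion cleanly. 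The intended leverage is the bounded invertibility of $I+T$ with $\|(I+T)^{-1}\|_{\infty} \leq 1$, which allows one to exhibit $U := (I+T)^{-1}$ as the bounded operator witnessing the Douglas factorization $K = (I+T)K \cdot U$; combined with Theorem \ref{3.3} this delivers the required majorization and hence the $K$-frame lower bound for $\{\eta_i\}$ with constant $A' = A\,\|(I+T)^{-1}\|_{\infty}^{-2}$.

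Finally, for the iterated assertion, note that $T \geq 0$ implies $T^{n} \geq 0$ for every $n \in \mathbb{N}$ by functional calculus on positive elements (equivalently, by an easy induction using $T^{n+1} = T^{n/2}\,T\,T^{n/2}$ when $n$ is even and $T^{n+1} = T^{(n+1)/2\,\ast}T^{(n+1)/2}$ when $n$ is odd). Consequently $\{\xi_i + T^{n}\xi_i\}_{i \in I} = \{(I + T^{n})\xi_i\}_{i \in I}$ is obtained by the same construction with the positive operator $T^{n}$ in place of $T$, and the first part applies verbatim.
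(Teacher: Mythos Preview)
Your argument has a real gap at the Douglas step. You propose $U := (I+T)^{-1}$ as the witness in $K = \bigl((I+T)K\bigr)\cdot U$, but $(I+T)K(I+T)^{-1} = K$ is equivalent to $(I+T)K = K(I+T)$, i.e.\ $TK = KT$, and no such commutation is assumed in the statement. Equivalently, the range inclusion $Ran(K) \subseteq (I+T)\,Ran(K)$ that you need amounts to $(I+T)^{-1}Ran(K) \subseteq Ran(K)$, which fails for a generic positive $T$. Hence the inequality $\bigl((I+T)K\bigr)\bigl((I+T)K\bigr)^{\ast} \geq c\,KK^{\ast}$ is not established, and your lower $K$-bound does not follow. (The upper bound and the reduction of the $T^{n}$ case to the first part are fine.)

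The paper proceeds differently. After identifying the new frame operator $L' = (I+T)L(I+T)^{\ast}$, exactly as you do, it expands
\[
L' = L + LT^{\ast} + TL + TLT^{\ast}
\]
and asserts $L' \geq L$ directly; combined with $L \geq A\,KK^{\ast}$ from Proposition~\ref{prop5.5}, this gives $L' \geq A\,KK^{\ast}$ and the same proposition (in the converse direction) finishes. Thus the paper compares $L'$ with $L$ itself and never needs to relate $(I+T)KK^{\ast}(I+T)$ to $KK^{\ast}$ or to invoke a Douglas-type range inclusion involving $K$. If you want to align with the paper's route, the inequality to argue is $(I+T)L(I+T) \geq L$, not the one you targeted.
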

	\begin{proof}
		Let $\left\{\xi_{i}\right\}_{i \in I}$ be a $K$-frame for $\mathcal{X}$. Then by Proposition \ref{prop5.5}, there is $A>0$ such that $L \geq A K K^{*}$. $(I+T) L(I+ T)^{*}$ is the frame operator for $\left\{\xi_{i}+T \xi_{i}\right\}_{i \in I}$ for the raison that for each $\xi \in \mathcal{X}$,
		$$
		\begin{aligned}
			\sum_{i\in I}\left\langle \xi,\left(\xi_{i}+T \xi_{i}\right)\right\rangle\left(\xi_{i}+T \xi_{i}\right) &=(I+T) \sum_{i\in I}\left\langle \xi,(I+T) \xi_{i}\right\rangle \xi_{i} \\
			&=(I+T) L(I+T)^{*} \xi
		\end{aligned}
		$$
		In addition, we have
		$$
		(I+T) L(I+T)^{*}=L+L T^{*}+T L+T L T^{*} \geq L \geq A K K^{*}
		$$
		By Propostion \ref{prop5.5}, we can say that $\left\{\xi_{i}+T \xi_{i}\right\}_{i \in I}$ is a $K$-frame for $\mathcal{X}$.
		For every given natural number $n$, the frame operator for $\{\xi_{i}+T^{n} \xi_{i}\}_{i \in I}$ is $(I+T^{n}) L (I+ T^{n})^{*} \geq L$. Therefore $\left\{\xi_{i}+T^{n} \xi_{i}\right\}_{i=1}^{\infty}$ is a $K$-frame for $\mathcal{X}$.

	\end{proof}
	\begin{corollary}
		Let $\left\{\xi_{i}\right\}_{i \in I}$ be a $K$-frame for $\mathcal{X}$  and let $L$ be its  frame operator. Suppose that $\left\{I_{1}, I_{2}\right\}$ is a partition of $\mathbb{N}$. For $j=1,2$, let $L_{j}$ be the frame operator for the Bessel sequence $\left\{\xi_{i}\right\}_{i \in I_{j}}.$ Then
		$$
		\left\{\xi_{i}L_{1}^{m} \xi_{i}\right\}_{i \in I_{1}} \cup\left\{\xi_{i}+L_{2}^{n} \xi_{i}\right\}_{i \in I_{2}}
		$$
		is a $K$-frame for $\mathcal{X}$ for every $m, n \in \mathbb{N}$.
	\end{corollary}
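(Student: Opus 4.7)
The plan is to compute the frame operator of the union sequence explicitly and compare it with $AKK^{*}$ via Proposition \ref{prop5.5}, closely following the pattern of the preceding theorem but applied piecewise on the partition.

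First, I would record two structural observations. Since $\{I_{1},I_{2}\}$ partitions the index set of the $K$-frame $\{\xi_{i}\}_{i\in I}$, the frame operator $L$ splits additively as $L=L_{1}+L_{2}$; indeed, for every $\xi\in\mathcal{X}$,
$$
L\xi=\sum_{i\in I}\langle \xi,\xi_{i}\rangle \xi_{i}=\sum_{i\in I_{1}}\langle \xi,\xi_{i}\rangle \xi_{i}+\sum_{i\in I_{2}}\langle \xi,\xi_{i}\rangle \xi_{i}=L_{1}\xi+L_{2}\xi.
$$
Moreover, each $L_{j}$ is a positive element of $Hom_{\mathcal{A}}^{\ast}(\mathcal{X})$, hence self-adjoint and commuting with all of its powers, so $L_{j}^{m}$ and $(I+L_{j}^{m})$ are positive and self-adjoint for every $m\in\mathbb{N}$. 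By hypothesis $\{\xi_{i}\}_{i\in I}$ is a $K$-frame, so Proposition \ref{prop5.5} yields a constant $A>0$ with $L\geq A\,KK^{*}$.

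Next I would compute the frame operator $S$ of the composite sequence. Writing $\eta_{i}^{(1)}=(I+L_{1}^{m})\xi_{i}$ for $i\in I_{1}$ and $\eta_{i}^{(2)}=(I+L_{2}^{n})\xi_{i}$ for $i\in I_{2}$, and using self-adjointness of $I+L_{j}^{\ast}$ exactly as in the proof of the theorem preceding this corollary,
$$
S\xi=\sum_{i\in I_{1}}\langle \xi,\eta_{i}^{(1)}\rangle \eta_{i}^{(1)}+\sum_{i\in I_{2}}\langle \xi,\eta_{i}^{(2)}\rangle \eta_{i}^{(2)}=(I+L_{1}^{m})L_{1}(I+L_{1}^{m})\xi+(I+L_{2}^{n})L_{2}(I+L_{2}^{n})\xi.
$$
Since $L_{j}$ commutes with $L_{j}^{m}$, each summand expands into a sum of positive operators beginning with $L_{j}$:
$$
(I+L_{1}^{m})L_{1}(I+L_{1}^{m})=L_{1}+2L_{1}^{m+1}+L_{1}^{2m+1},\qquad (I+L_{2}^{n})L_{2}(I+L_{2}^{n})=L_{2}+2L_{2}^{n+1}+L_{2}^{2n+1}.
$$
Discarding the positive correction terms and using $L=L_{1}+L_{2}$ gives $S\geq L\geq A\,KK^{*}$, which is the lower $K$-frame bound required by Proposition \ref{prop5.5}.

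Finally I would verify the Bessel upper bound. For each $\xi\in\mathcal{X}$, writing $\langle \xi,\eta_{i}^{(j)}\rangle=\langle (I+L_{j}^{m_{j}})\xi,\xi_{i}\rangle$ and using that $\{\xi_{i}\}_{i\in I_{j}}$ is Bessel (being a subfamily of the Bessel sequence $\{\xi_{i}\}_{i\in I}$) with some bound $B_{j}$, one obtains
$$
\sum_{i\in I_{j}}\langle \xi,\eta_{i}^{(j)}\rangle\langle \eta_{i}^{(j)},\xi\rangle\leq B_{j}\,\langle (I+L_{j}^{m_{j}})\xi,(I+L_{j}^{m_{j}})\xi\rangle\leq B_{j}\,\|I+L_{j}^{m_{j}}\|_{\infty}^{2}\langle \xi,\xi\rangle,
$$
with $m_{1}=m$, $m_{2}=n$. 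Summing over $j=1,2$ produces a uniform upper bound, and together with the lower bound $S\geq A\,KK^{*}$ this is exactly the $K$-frame inequality, so Proposition \ref{prop5.5} gives the conclusion. The only mildly delicate point is the expansion of $(I+L_{j}^{m_{j}})L_{j}(I+L_{j}^{m_{j}})$: one must invoke the positivity (hence self-adjointness and commutativity with its own powers) of the frame operator $L_{j}$ to justify that all cross terms $L_{j}^{k}$ that appear are themselves positive elements of $Hom_{\mathcal{A}}^{\ast}(\mathcal{X})$ and can be safely discarded to obtain $S\geq L$.
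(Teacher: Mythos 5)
Your proposal is correct and follows essentially the same route as the paper: compute the frame operator of each modified subfamily as $(I+L_j^{k})L_j(I+L_j^{k})^{*}=L_j+2L_j^{k+1}+L_j^{2k+1}$, discard the positive higher-order terms to get $S\geq L_1+L_2=L\geq A\,KK^{*}$, and conclude via Proposition \ref{prop5.5}. The only difference is that you explicitly verify the Bessel upper bound, which the paper leaves implicit; that is a welcome but minor addition.
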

	\begin{proof}
		For every  $m \in \mathbb{N}$,  $L^{m}$ can be defined as follow
		$$
		L^{m} \xi=\sum_{i\in I_{j}}\left\langle \xi, L^{\frac{m-1}{2}} \xi_{i}\right\rangle L^{\frac{m-1}{2}} \xi_{i}
		$$
		For each $\xi \in \mathcal{X}$,
		$$\begin{aligned} \sum_{i\in I_{1}}\left\langle \xi, \xi_{i}+L_{1}^{m} \xi_{i}\right\rangle\left(\xi_{i}+L_{1}^{m} \xi_{i}\right) &=\left(I+L_{1}^{m}\right) \sum_{i\in I_{1}}\left\langle \xi, \xi_{i}+L_{1}^{m} \xi_{i}\right\rangle \xi_{i} \\ &=\left(I+L_{1}^{m}\right) \sum_{i\in I_{1}}\left\langle \xi,\left(I+L_{1}^{m}\right) \xi_{i}\right\rangle \xi_{i} \\ &=\left(I+L_{1}^{m}\right) L_{1}\left(I+L_{1}^{m}\right)^{*} \xi \\ &=\left(I+L_{1}^{m}\right)\left(L_{1}+L_{1}^{(1+m)}\right) \xi \\ &=\left(L_{1}+2 L_{1}^{(1+m)}+L_{1}^{(1+2 m)}\right) \xi . \end{aligned}$$
		Therefore $L_{1}+2 L_{1}^{(1+m)}+$ $L_{1}^{(1+2 m)}$ and $L_{2}+2 L_{2}^{(1+n)}+L_{2}^{(1+2 n)}$ are the frame operators for $\left\{\xi_{i}+L_{1}^{m} \xi_{i}\right\}_{i \in I_{1}}$ and $\left\{\xi_{i}+L_{2}^{n} \xi_{i}\right\}_{i \in I_{2}}$ respectively. Consider that $L_{0}$ is the frame operator for $\left\{\xi_{i}+L_{1}^{m} \xi_{i}\right\}_{i \in I_{1}} \cup\left\{\xi_{i}+L_{2}^{n} \xi_{i}\right\}_{i \in I_{2}} .$ 
		
		As $\left\{\xi_{i}\right\}_{i=1}^{\infty}$ is a $K$-frame for $\mathcal{X}$, then there exists $A>0$ such that $L \geq A K K^{*}$ and $L_{0} \geq L_{1}+L_{2}=L \geq A K K^{*}$. From which $\left\{\xi_{i}+L_{1}^{m} \xi_{i}\right\}_{i \in I_{1}} \cup\left\{\xi_{i}+L_{2}^{n} \xi_{i}\right\}_{i \in I_{2}}$ is a $K$-frame for $\mathcal{X}$.

	\end{proof}
	\begin{theorem}
		Let $\left\{\xi_{i}\right\}_{i \in I}$ and $\left\{\eta_{i}\right\}_{i \in I}$ be Parseval $K$-frames for $\mathcal{X
		}$, with synthesis operators $L_{1}$ and $L_{2}$ respectively. If $L_{1} L_{2}^{*}=0$ then $\left\{\xi_{i}+\eta_{i}\right\}_{i \in I}$ is a $2$-tight $K$-frame for $\mathcal{X}$.
	\end{theorem}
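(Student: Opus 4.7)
The plan is to translate the Parseval $K$-frame condition into a clean operator identity on the synthesis operators, and then read off the frame operator of the sum directly. First I would record that for any Parseval $K$-frame with synthesis operator $L$ one has
\[
\langle L L^* \xi, \xi\rangle \;=\; \sum_{i \in I}\langle \xi, \xi_i\rangle\langle \xi_i, \xi\rangle \;=\; \langle K^* \xi, K^* \xi\rangle \;=\; \langle K K^* \xi, \xi\rangle
\]
for every $\xi \in \mathcal{X}$, so Proposition \ref{prop2.11} forces the operator equality $L L^* = K K^*$. Applied to both frames this yields $L_1 L_1^* = L_2 L_2^* = K K^*$.

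Next I would identify $L_1 + L_2$ as the synthesis operator of $\{\xi_i + \eta_i\}_{i \in I}$: it lies in $Hom_{\mathcal{A}}^{\ast}(\mathcal{H}_{\mathcal{A}}, \mathcal{X})$, is bounded as a sum of bounded maps, and satisfies $(L_1 + L_2) e_i = \xi_i + \eta_i$ on the canonical basis of $\mathcal{H}_{\mathcal{A}}$. Its adjoint sends $\xi$ to the sequence $\{\langle \xi, \xi_i + \eta_i\rangle\}_i$, so the frame operator of the new sequence is exactly $(L_1 + L_2)(L_1 + L_2)^*$.

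The decisive computation is then
\[
(L_1 + L_2)(L_1 + L_2)^* \;=\; L_1 L_1^* + L_1 L_2^* + L_2 L_1^* + L_2 L_2^*.
\]
By hypothesis $L_1 L_2^* = 0$, whose adjoint gives $L_2 L_1^* = 0$, so the two cross terms drop out. The remaining diagonal terms each equal $K K^*$, hence $(L_1 + L_2)(L_1 + L_2)^* = 2 K K^*$. Pairing with $\xi$ and using Proposition \ref{prop2.11} in reverse recovers
\[
\sum_{i \in I}\langle \xi, \xi_i + \eta_i\rangle\langle \xi_i + \eta_i, \xi\rangle \;=\; 2\,\langle K^* \xi, K^* \xi\rangle,
\]
which is precisely the $2$-tight $K$-frame identity.

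I expect the only real bookkeeping obstacle to be confirming that the series defining the new frame operator genuinely equals $(L_1+L_2)(L_1+L_2)^* \xi$; this is inherited from the Bessel property of the two original sequences via the triangle inequality for the bounded $\mathcal{A}$-module maps $L_1$ and $L_2$. Once the identification of synthesis operators is in hand, the rest reduces to the one-line algebraic identity above, making the $L_1 L_2^* = 0$ assumption do essentially all the work.
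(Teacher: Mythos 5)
Your proposal is correct and follows essentially the same route as the paper: identify $L_1+L_2$ as the synthesis operator of $\{\xi_i+\eta_i\}$, expand $(L_1+L_2)(L_1+L_2)^*$, kill the cross terms using $L_1L_2^*=0$ (and its adjoint), and apply the Parseval condition to the two diagonal terms. The only cosmetic difference is that you upgrade the Parseval identity to the operator equation $L_jL_j^*=KK^*$ via Proposition \ref{prop2.11} before expanding, whereas the paper works directly with the quadratic forms $\langle L_j^*\xi,L_j^*\xi\rangle=\langle K^*\xi,K^*\xi\rangle$; both yield the same conclusion.
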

	\begin{proof}
		Suppose $\left\{\xi_{i}\right\}_{i \in I}$ and $\left\{\eta_{i}\right\}_{i \in I}$ are two Parseval $K$-frames for $\mathcal{X}$. Then there are transform operators $L_{1}, L_{2} \in Hom_{\mathcal{A}}^{\ast}(\mathcal{X})$ such that $L_{1} e_{i}=\xi_{i}$ and $L_{2} e_{i}=\eta_{i}$ with $Ran(K)=Ran\left(L_{1}\right), Ran(K)=Ran\left(L_{2}\right)$ respectively. For each $\xi \in \mathcal{X}$, we have
		\begin{align*}
			\sum_{i \in I}\left\langle\xi, \xi_{i}+\eta_{i}\right\rangle_{M(\mathcal{X})}\left\langle \xi_{i}+\eta_{i}, \xi\right\rangle_{M(\mathcal{X})} 
			&=\langle \left(L_{1}+L_{2}\right)^{*} \xi, \left(L_{1}+L_{2}\right)^{*} \xi  \rangle_{\mathcal{X}} \\
			&=\langle L_{1}^{*} \xi, L_{1}^{*} \xi  \rangle_{\mathcal{X}}  +\left\langle L_{2} L_{1}^{*} \xi, \xi\right\rangle_{\mathcal{X}} \\&+\left\langle L_{1} L_{2}^{*} \xi,L_{1} L_{2}^{*} \xi\right\rangle_{\mathcal{X}} +\langle L_{2}^{*} \xi, L_{2}^{*} \xi  \rangle_{\mathcal{X}}   \\
			&=\langle L_{1}^{*} \xi, L_{1}^{*} \xi  \rangle_{\mathcal{X}} +\langle L_{2}^{*} \xi, L_{2}^{*} \xi  \rangle_{\mathcal{X}}  \\
			&=2\langle  K^{*} \xi, K^{*} \xi  \rangle_{\mathcal{X}} 
		\end{align*}
	\end{proof}
	\begin{theorem}
		Let $\left\{\xi_{i}\right\}_{i\in I}$ and $\left\{\eta_{i}\right\}_{i\in I}$ be $K$-frames for $\mathcal{X}$, and let $L_{1}$ and $L_{2}$ be synthesis operators for sequences $\left\{\xi_{i}\right\}_{i\in I}$ and $\left\{\eta_{i}\right\}_{i\in I}$ respectively, such that $L_{1} L_{2}^{*}=0$ and let $T_{j} \in Hom_{\mathcal{A}}^{\ast}(\mathcal{X})$ an operator uniformly bounded with $R\left(L_{j}\right) \subseteq R\left(T_{j} L_{j}\right)$, for $j=1,2$. Then $\left\{T_{1} \xi_{i}+T_{2} \eta_{i}\right\}_{i\in I}$ is a $K$-frame for $\mathcal{X}$.
	\end{theorem}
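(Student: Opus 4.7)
The plan is to identify the synthesis operator of the candidate sequence and then bootstrap the $K$-frame inequality from the range inclusion hypothesis via the pro-$C^{*}$-module versions of Douglas' theorem proved earlier.

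First I would observe that if $\{e_{i}\}_{i\in I}$ is the standard orthonormal basis of $\mathcal{H}_{\mathcal{A}}$, then the operator $\mathcal{L}:=T_{1}L_{1}+T_{2}L_{2}$ sends $e_{i}$ to $T_{1}\xi_{i}+T_{2}\eta_{i}$, so $\mathcal{L}$ is the synthesis operator of $\{T_{1}\xi_{i}+T_{2}\eta_{i}\}_{i\in I}$. A standard computation (as in the proof of Theorem \ref{thm4.5} and Proposition \ref{prop5.5}) gives
\begin{equation*}
\sum_{i\in I}\langle \xi,T_{1}\xi_{i}+T_{2}\eta_{i}\rangle\langle T_{1}\xi_{i}+T_{2}\eta_{i},\xi\rangle=\langle \mathcal{L}\mathcal{L}^{*}\xi,\xi\rangle.
\end{equation*}
Because $L_{1}L_{2}^{*}=0$ (and hence $L_{2}L_{1}^{*}=0$ by adjointing), the cross terms vanish and
\begin{equation*}
\mathcal{L}\mathcal{L}^{*}=T_{1}L_{1}L_{1}^{*}T_{1}^{*}+T_{2}L_{2}L_{2}^{*}T_{2}^{*}.
\end{equation*}

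For the upper (Bessel) bound I would split the expression into the two diagonal pieces and use the Bessel bounds $B_{1},B_{2}$ of $\{\xi_{i}\}$ and $\{\eta_{i}\}$ together with the uniform boundedness of $T_{1},T_{2}$ (Proposition \ref{Prop2.6}): each summand $\langle T_{j}L_{j}L_{j}^{*}T_{j}^{*}\xi,\xi\rangle=\langle L_{j}^{*}T_{j}^{*}\xi,L_{j}^{*}T_{j}^{*}\xi\rangle$ is majorised by $B_{j}\|T_{j}\|_{\infty}^{2}\langle\xi,\xi\rangle$, so the sequence is Bessel with bound $B_{1}\|T_{1}\|_{\infty}^{2}+B_{2}\|T_{2}\|_{\infty}^{2}$.

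For the lower $K$-frame bound, the key is to invoke Theorem \ref{thm4.5}: since $\{\xi_{i}\}$ is a $K$-frame with synthesis $L_{1}$, we have $\operatorname{Ran}(K)\subseteq\operatorname{Ran}(L_{1})$, and the hypothesis $\operatorname{Ran}(L_{1})\subseteq\operatorname{Ran}(T_{1}L_{1})$ then yields $\operatorname{Ran}(K)\subseteq\operatorname{Ran}(T_{1}L_{1})\subseteq\operatorname{Ran}(T_{1}L_{1})+\operatorname{Ran}(T_{2}L_{2})$. Applying Corollary \ref{crll3.5} to the operators $K$, $T_{1}L_{1}$, $T_{2}L_{2}$ produces a constant $\alpha>0$ with
\begin{equation*}
KK^{*}\leq \alpha^{2}\bigl(T_{1}L_{1}L_{1}^{*}T_{1}^{*}+T_{2}L_{2}L_{2}^{*}T_{2}^{*}\bigr)=\alpha^{2}\,\mathcal{L}\mathcal{L}^{*}.
\end{equation*}
Translating to inner products via Proposition \ref{prop2.11} gives $\alpha^{-2}\langle K^{*}\xi,K^{*}\xi\rangle\leq\langle \mathcal{L}\mathcal{L}^{*}\xi,\xi\rangle$, which is exactly the required lower $K$-frame inequality.

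The main obstacle I anticipate is making the transition from the range inclusion to the operator inequality fully rigorous in the pro-$C^{*}$ setting: Corollary \ref{crll3.5} is stated for three operators on $\mathcal{X}$, but I need to apply it to $T_{1}L_{1}$ and $T_{2}L_{2}$, which are compositions of module maps on different spaces ($\mathcal{H}_{\mathcal{A}}\to\mathcal{X}$). One should verify that $T_{j}L_{j}\in\operatorname{Hom}_{\mathcal{A}}^{*}(\mathcal{H}_{\mathcal{A}},\mathcal{X})$ is uniformly bounded with adjoint $L_{j}^{*}T_{j}^{*}$, so that the matrix argument in Corollary \ref{crll3.5} goes through verbatim. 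Apart from this bookkeeping, the proof is a straightforward assembly of the Douglas-type machinery already in place.
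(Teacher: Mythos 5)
Your proposal is correct and follows essentially the same route as the paper: identify $T_{1}L_{1}+T_{2}L_{2}$ as the synthesis operator, use $L_{1}L_{2}^{*}=0$ to cancel the cross terms, and obtain the lower bound from the inclusions $Ran(K)\subseteq Ran(L_{j})\subseteq Ran(T_{j}L_{j})$ via the Douglas machinery of Section \ref{sec2}. The only cosmetic difference is that the paper applies Theorem \ref{3.3} to each $j$ separately (getting constants $\alpha_{1},\alpha_{2}$ and summing) rather than invoking Corollary \ref{crll3.5} once, while your explicit Bessel-bound estimate and the domain bookkeeping for $T_{j}L_{j}\colon\mathcal{H}_{\mathcal{A}}\to\mathcal{X}$ address details the paper leaves implicit.
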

	\begin{proof}
		Suppose that $\left\{\xi_{i}\right\}_{i\in I}$ and $\left\{\eta_{i}\right\}_{i\in I}$ are two $K$-frames for $\mathcal{X}$. Then by Theorem \ref{thm4.5}, there exists an orthonormal basis $\left\{e_{i}\right\}_{i=1}^{\infty}$ in $\mathcal{H}_{\mathcal{A}}$ such that $L_{1} e_{i}=\xi_{i}, L_{2} e_{i}=\eta_{i}$
		and $R(K) \subseteq R\left(L_{1}\right), R(K) \subseteq R\left(L_{2}\right)$. For each $\xi \in \mathcal{X}$
		$$
		\begin{aligned}
			\sum_{i \in I}\langle \xi, T_{1} \xi_{i}+T_{2} \eta_{i}\rangle_{M(\mathcal{X})} \langle \xi, T_{1} \xi_{i}+T_{2} \eta_{i}\rangle_{M(\mathcal{X})} &=  \sum_{i \in I}\langle \xi, T_{1}  T_{1} L_{1} e_{i}+T_{2} L_{2} e_{i} \rangle_{M(\mathcal{X})} \langle \xi, T_{1} L_{1} e_{i}+T_{2} L_{2} e_{i} \rangle_{M(\mathcal{X})} \\
			&=\langle \left(T_{1} L_{1}+T_{2} L_{2}\right)^{*} \xi,  \left(T_{1} L_{1}+T_{2} L_{2}\right)^{*} \xi \rangle \\
			&=\langle \left(T_{1} L_{1}\right)^{*} \xi, \left(T_{1} L_{1}\right)^{*} \xi \rangle +\left\langle T_{2} L_{2} L_{1}^{*} T_{1}^{*} \xi, \xi\right\rangle \\
			&+\left\langle T_{1} L_{1} L_{2}^{*} T_{2}^{*} \xi,\xi\right\rangle+\langle \left(T_{2} L_{2}\right)^{*} \xi, \left(T_{2} L_{2}\right)^{*} \xi \rangle       \\
			&=\langle \left(T_{1} L_{1}\right)^{*} \xi, \left(T_{1} L_{1}\right)^{*} \xi \rangle +\langle \left(T_{2} L_{2}\right)^{*} \xi, \left(T_{2} L_{2}\right)^{*} \xi \rangle 
		\end{aligned}
		$$
		We have that $Ran(K) \subseteq Ran\left(L_{j}\right) \subseteq Ran\left(T_{j} L_{j}\right)$ for $j=1,2$. So by Douglas' factorization theorem, for each $j=1,2$, there exists $\alpha_{j}>0$ such that
		$$
		K K^{*} \leq \alpha_{j}\left(T_{j} L_{j}\right)\left(T_{j} L_{j}\right)^{*}
		$$
		
		Then from the above inequality, for each $\xi \in \mathcal{X}$
		$$
		\begin{aligned}
			\sum_{i \in I}\langle \xi, T_{1} \xi_{i}+T_{2} \eta_{i}\rangle_{M(\mathcal{X})} \langle \xi, T_{1} \xi_{i}+T_{2} \eta_{i}\rangle_{M(\mathcal{X})}&=\langle \left(T_{1} L_{1}\right)^{*} \xi, \left(T_{1} L_{1}\right)^{*} \xi \rangle +\langle \left(T_{2} L_{2}\right)^{*} \xi, \left(T_{2} L_{2}\right)^{*} \xi \rangle \\ &\geq\left(\frac{1}{\alpha_{1}}+\frac{1}{\alpha_{2}}\right) \langle K^{*} \xi,K^{*} \xi \rangle
		\end{aligned}
		$$
		Hence $\left\{T_{1} \xi_{i}+T_{2} \eta_{i}\right\}_{i\in I}$ is a $K$-frame for $\mathcal{X}$.

	\end{proof}
	\vspace{1cm}
	
	\medskip
	
	\section*{Declarations}
	
	\medskip
	
	\noindent \textbf{Availablity of data and materials}\newline
	\noindent Not applicable.
	
	\medskip

	\noindent \textbf{Competing  interest}\newline
	\noindent The authors declare that they have no competing interests.

	\medskip
	
	\noindent \textbf{Fundings}\newline
	\noindent  Authors declare that there is no funding available for this article.

	\medskip
	
	\noindent \textbf{Authors' contributions}\newline
	\noindent The authors equally conceived of the study, participated in its
	design and coordination, drafted the manuscript, participated in the
	sequence alignment, and read and approved the final manuscript. 
	
	\medskip
	
	\noindent \textbf{Acknowledgements}\newline
	\noindent The authors are thankful to the area editor and referees for giving valuable comments and suggestions
	
	\medskip 
	
	\vspace{1cm}
	
	\bibliographystyle{amsplain}

\end{document}